\providecommand{\U}[1]{\protect\rule{.1in}{.1in}}
\newtheorem{theorem}{Theorem}
\newtheorem{lemma}{Lemma}[section]
\newtheorem{proposition}{Proposition}
\newtheorem{remark}{Remark}
\newenvironment{proof}[1][Proof]{\textbf{#1.} }{\ \rule{1em}{1em}}
\numberwithin{equation}{section}
\begin{document}

\title{ Instability of large solitary water waves}
\author{Zhiwu Lin\\Mathematics Department\\University of Missouri\\Columbia, MO 65211 USA}
\date{}
\maketitle

\begin{abstract}
We consider the linearized instability of 2D irrotational solitary
water waves. The maxima of energy and the travel speed of solitary
waves are not obtained at the highest wave, which has a 120 degree
angle at the crest. Under the assumption of non-existence of
secondary bifurcation which is confirmed numerically, we prove
linear instability of solitary waves which are higher than the wave
of maximal energy and lower than the wave of maximal travel speed.
It is also shown that there exist unstable solitary waves
approaching the highest wave. The unstable waves are of large
amplitude and therefore this type of instability can not be captured
by the approximate models derived under small amplitude assumptions.
For the proof, we introduce a family of nonlocal dispersion
operators to relate the linear instability problem with the elliptic
nature of solitary waves. A continuity argument with a moving kernel
formula is used to study these dispersion operators to yield the
instability criterion.

\end{abstract}

\section{Introduction}

\textbf{Preliminaries.}{\large \ \ }The water-wave problem in its simplest
form concerns two-dimensional motion of an incompressible inviscid liquid with
a free surface, acted on only by gravity. Suppose, for definiteness, that in
the $(x,y)$-Cartesian coordinates gravity acts in the negative $y$-direction
and that the liquid at time $t$ occupies the region bounded from above by the
free surface $y=\eta(t;x)$ and from below by the flat bottom $y=-h$, where
$h>0$ is the water depth. In the fluid region $\{(x,y):-h<y<\eta(t;x)\}$, the
velocity field $(u(t;x,y),v(t;x,y))$ satisfies the incompressibility
condition
\begin{equation}
\partial_{x}u+\partial_{y}v=0 \label{E:Euler1}%
\end{equation}
and the Euler equation
\begin{equation}%
\begin{cases}
\partial_{t}u+u\partial_{x}u+v\partial_{y}u=-\partial_{x}P\\
\partial_{t}v+u\partial_{x}v+v\partial_{y}v=-\partial_{y}P-g,
\end{cases}
\label{E:Euler2}%
\end{equation}
where $P(t;x,y)$ is the pressure and $g>0$ denotes the gravitational constant
of acceleration. The kinematic and dynamic boundary conditions at the free
surface $\{y=\eta(t;x)\}$
\begin{equation}
v=\partial_{t}\eta+u\partial_{x}\eta\quad\text{and}\quad P=P_{\text{atm}}
\label{E:top}%
\end{equation}
express, respectively, that the boundary moves with the velocity of the fluid
particles at the boundary and that the pressure at the surface equals the
constant atmospheric pressure $P_{\text{atm}}$. The impermeability condition
at the flat bottom states that
\begin{equation}
v=0\qquad\text{at}\quad\{y=-h\}. \label{E:bottom}%
\end{equation}
In this paper we consider the irrotational case with $\operatorname{curl}v=0$,
for which the Euler equation (\ref{E:Euler1})--(\ref{E:bottom}) is reduced to
the Bernoulli equation
\[
\frac{\partial\phi}{\partial t}+\frac{1}{2}\left\vert \nabla\phi\right\vert
^{2}+g\eta=c\left(  t\right)
\]
where $\phi$ is the vector potential such that $\left(  u,v\right)
=\nabla\phi$. The local well-posedness of the full water wave problem was
proved by Wu (\cite{wu}) for deep water and by Lannes for water of finite
depth (\cite{lannes}).

We consider a traveling solitary wave solution of (\ref{E:Euler1}%
)--(\ref{E:bottom}), that is, a solution for which the velocity field, the
wave profile and the pressure have space-time dependence $(x+ct,y)$, where
$c>0$ is the speed of wave propagation. With respect to a frame of reference
moving with the speed $c$, the wave profile appears to be stationary and the
flow is steady. It is traditional in the traveling-wave problem to define the
relative stream function $\psi(x,y)$ and vector potential $\phi\left(
x,y\right)  $ such that:
\begin{equation}
\psi_{x}=-v,\qquad\psi_{y}=u+c \label{D:stream}%
\end{equation}
and
\begin{equation}
\phi_{x}=u+c,\qquad\phi_{y}=v. \label{D:v-potential}%
\end{equation}
The boundary conditions at infinity are
\[
\left(  u,v\right)  \rightarrow\left(  0,0\right)  ,\text{ }\eta\left(
x\right)  \rightarrow0\text{, as }\left\vert x\right\vert \rightarrow+\infty.
\]
The solitary wave problem for (\ref{E:Euler1})--(\ref{E:bottom}) is then
reduced to an elliptic problem with the free boundary $\left\{  y=\eta
(x)\right\}  \ $(\cite{at81}):

Find $\eta(x)$ and $\psi(x,y)$, in $\{(x,y):-\infty<x<+\infty,\ -h<y<\eta
(x)\}$, such that
\begin{subequations}
\label{stream}%
\begin{align}
\Delta\psi &  =0\qquad\text{in}\quad-h<y<\eta(x){,}\\
\psi &  =0\quad\text{on}\quad y=\eta(x){,}\\
|\nabla\psi|^{2}+ &  2gy=c^{2}\qquad\text{on}\quad y=\eta(x){,}\\
\psi &  =-ch\quad\text{on}\quad y=-h,
\end{align}
with
\end{subequations}
\[
\nabla\psi\rightarrow\left(  0,c\right)  \text{, }\eta\left(  x\right)
\rightarrow0\text{, as }\left\vert x\right\vert \rightarrow+\infty.\text{ }%
\]
First we give a summary of the existence theory of solitary water waves.
Lavrentiev (\cite{lavrentiv}) got the first proof of the existence of small
solitary waves by studying the long wave limit. A direct construction of small
solitary waves was given by Fridrichs and Hyers (\cite{fre-heys}), and their
proof was readdressed by Beale (\cite{beale77}) via the Nash-Moser method. The
existence of large amplitude solitary waves was shown by Amick and Toland
(\cite{at81}). The highest wave was also shown to exist by Toland
(\cite{toland78}), and its angle at the crest was shown to be 120\ degree
(Stokes's Conjecture in 1880) by Amick, Toland and Fraenkel (\cite{atf}). The
symmetry of solitary waves was studied by Craig and Sternberg (\cite{craige88}%
). Plotnikov (\cite{plot91}) studied the secondary bifurcation and showed that
solitary waves are not unique for certain travelling speed. The particle
trajectory for solitary waves was studied by Constantin and Escher
(\cite{con-esher}). We list some properties of the solitary waves which will
be used in the study of their stability. Denote the Froude number by
\[
F=\frac{c}{\sqrt{gh}},
\]
and the Nekrasov parameter by
\[
\mu=\frac{6ghc}{\pi q_{c}^{3}},
\]
where $q_{c}$ is the (relative) speed of the flow at the crest. We note that
$\mu$ is the bifurcation parameter used in \cite{at81}. The highest wave
corresponds to $\mu=+\infty$ since $q_{c}=0.$The following properties of
solitary waves are proved:

(P1) (\cite{at81}) There exists a curve of solitary waves that are symmetric,
positive $\left(  \eta>0\right)  $ and monotonically decay on either side of
the crest, with the parameter $\mu\in\left(  \frac{6}{\pi},+\infty\right)  $.
When $\mu\nearrow+\infty$, the solitary waves tend to the highest wave with
the 120 degree angle at the crest. When $\mu\searrow\frac{6}{\pi},$ the
solitary waves tend to the small waves constructed in \cite{fre-heys} and
\cite{beale77}. Moreover, we have $\nabla\psi\rightarrow\left(  0,c\right)  $,
$\eta\left(  x\right)  \rightarrow0$ exponentially as $\left\vert x\right\vert
\rightarrow+\infty.\ $Below, we call this solitary wave curve the primary branch.

(P2) (\cite{starr}, \cite{at81}, \cite{mcleod-F}) Any positive and symmetric
solitary wave which decays monotonically on either side of its crest is
supercritical, that is, $F>1$ or equivalently $c>\sqrt{gh}$. The limit of
small waves corresponds to $F\searrow1$ (\cite{at81}, \cite{fre-heys}).

(P3) (\cite{craige88}) Any supercritical solitary wave $(F>1)\ $is symmetric,
positive and decays monotonically on either side of its crest. Moreover, any
nontrivial solitary wave curve connected to the primary branch must have $F>1$.

(P4) (\cite{plot91}) For small amplitudes waves with $\mu\approx\frac{6}{\pi}%
$, there is no secondary bifurcation on the primary branch. When the highest
wave is approached, that is, when $\mu\rightarrow+\infty$, there are
infinitely many points on the primary branch which are either a secondary
bifurcation point or a turning point where $c^{\prime}\left(  \mu\right)  =0.$

The property (P4) is essentially what was proved in \cite{plot91}, though our
statement above adapts the explanation in \cite[p. 245]{btd-2}. Moreover,
numerical evidences (\cite{saff-chen}, \cite{kataoka-soli}) indicate that the
following assumption holds true:

(H1) There are no secondary bifurcation points on the primary branch.

Under the assumption (H1), above property (P4) implies that there are
infinitely many turning points where $c^{\prime}\left(  \mu\right)  =0$. So
the travel speed $c\ $does not always increase with the wave amplitude, and
this differs greatly from KDV and other approximate models for which the
higher waves travel faster. More precisely, for full solitary water waves the
travel speed obtains its maximum before the highest wave and then it becomes
highly oscillatory near the highest wave. This fact was first observed from
numerical computations (\cite{bay-longuet}, \cite{long-fenton}), then
confirmed by the asymptotic analysis (\cite{long-fox1}, \cite{long-fox-soli}).
Indeed, almost all physical quantities (i.e. energy and momentum) do not
achieve their maxima at the highest wave, and are highly oscillatory around
the highest wave (see above references). This fact turns out to imply the
instability of large solitary waves, which was first discovered from numerical
computations (\cite{tanaka86}) and is rigorously proved in this paper.

\textbf{Main results.} \ Denote by $\mu_{1}$ the first turning point where
$c\left(  \mu\right)  $ obtains its global maximum and, by $\tilde{\mu}_{1}$
the first and also the global maximum point of $E\left(  \mu\right)  $, where
\begin{equation}
E\left(  \mu\right)  =\int_{\mathbf{R}}\int_{-h}^{\eta\left(  x\right)  }%
\frac{1}{2}\left(  u^{2}+v^{2}\right)  dydx+\int_{\mathbf{R}}\frac{1}{2}%
g\eta^{2}dx.\label{energy}%
\end{equation}
is the energy of the solitary wave with the parameter $\mu$. Numerical
computations (\cite{long-fenton}, \cite{tanaka86}, \cite{long-tanaka})
indicate that $\mu_{1}>\tilde{\mu}_{1}$, and $\tilde{\mu}_{1}$ is the only
critical point of $E\left(  \mu\right)  $ in $\left(  \frac{6}{\pi},\mu
_{1}\right)  $. We state it as another hypothesis:

(H2) The energy maximum is achieved on the primary branch before the wave of
the maximal travel speed (the first turning point).

\begin{theorem}
\label{thm-main} Under the assumptions (H1) and (H2), the solitary wave is
linearly unstable when $\mu\in\left(  \tilde{\mu}_{1},\mu_{1}\right)  $, where
$\mu_{1}$ and $\tilde{\mu}_{1}$ are the maxima points of the travel speed and
energy, respectively. The linearly instability is in the sense that there
exists a growing mode solution $e^{\lambda t}\left[  \eta\left(  x\right)
,\psi\left(  x,y\right)  \right]  $ ($\lambda>0$) to the linearized problem
(\ref{eqn-L}), where $\eta\left(  x\right)  ,\psi\left(  x,y\right)  \in
C^{\infty}\cap H^{k}$ for any $k>0.$
\end{theorem}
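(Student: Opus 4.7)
The plan is to reduce the search for a growing mode to finding a nontrivial kernel of a parameter-dependent self-adjoint operator posed on the free surface, and then to run a continuity argument in the growth rate $\lambda$ driven by the sign information in $c'(\mu)$ and $E'(\mu)$. First, I would linearize the steady problem (\ref{stream}) around a given solitary wave $(\eta_{0},\psi_{0})$ and look for solutions of the full linearized system (\ref{eqn-L}) of the form $e^{\lambda t}[\eta(x),\psi(x,y)]$ with $\lambda>0$. Because the perturbation $\psi$ remains harmonic in the fixed fluid domain $\Omega_{0}=\{-h<y<\eta_{0}(x)\}$, it is recovered from its trace on $\{y=\eta_{0}(x)\}$ through a Dirichlet-to-Neumann map, and the impermeability condition at $y=-h$ together with the linearized dynamic and kinematic boundary conditions collapse into a single nonlocal equation $A^{\lambda}\eta=0$ on the real line. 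The operator $A^{\lambda}$ is self-adjoint and depends analytically on $\lambda$ through a transport contribution coming from the background steady flow; this is the family of nonlocal dispersion operators alluded to in the abstract.

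Second, I would pin down the spectral picture of $A^{\lambda}$ at the two endpoints $\lambda=0$ and $\lambda=+\infty$. Translation invariance of the steady problem yields $A^{0}\eta_{0}'=0$, and a direct variational computation should identify $A^{0}$ with the second variation, along the surface, of a combined energy--impulse functional whose constrained critical points trace out the primary branch. Differentiating this variational identity in $\mu$ and using (H1) to ensure that the branch is a smooth one-parameter family without secondary bifurcation, the Morse index $n^{-}(A^{0})$ can be related to the sign pattern of $c'(\mu)$ and $E'(\mu)$: on the interval $(\tilde{\mu}_{1},\mu_{1})$ one has $c'(\mu)>0$ and $E'(\mu)<0$ by (H2) and the definitions of $\mu_{1}$ and $\tilde{\mu}_{1}$, which should force $n^{-}(A^{0})\geq 1$. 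At the opposite extreme, for $\lambda$ large the $\lambda$-dependent part dominates and $A^{\lambda}$ becomes positive definite modulo a finite-dimensional piece, so $n^{-}(A^{\lambda})=0$.

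Third, a continuity argument in $\lambda$ should complete the proof: as $\lambda$ increases from $0$ to $\infty$ the Morse index must drop by at least one, and each drop corresponds to an eigenvalue of $A^{\lambda}$ crossing zero, i.e., to a nontrivial kernel element at some $\lambda_{\ast}>0$. This kernel element then yields the growing mode $e^{\lambda_{\ast}t}(\eta_{\ast}(x),\psi_{\ast}(x,y))$ required by the theorem; the regularity $\eta,\psi\in C^{\infty}\cap H^{k}$ follows from elliptic regularity for the harmonic extension together with exponential decay estimates that exploit the supercritical condition $F>1$ from (P2). The main obstacle I anticipate is the moving kernel formula ruling out the degenerate scenario in which the zero eigenvalue generated by translation simply persists along the whole family $\{A^{\lambda}\}_{\lambda\geq 0}$: this requires an explicit Hadamard-type derivative of the perturbed eigenvalue in $\lambda$ whose sign is tied back to $E'(\mu)$ and $c'(\mu)$. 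A secondary difficulty is the essential spectrum of $A^{\lambda}$, which on the unbounded strip touches $0$ and demands careful weighted estimates before the Morse-index bookkeeping can be applied.
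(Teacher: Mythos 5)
Your overall architecture matches the paper's: reduce the growing-mode problem to finding a nontrivial kernel of a $\lambda$-parametrized nonlocal operator on the free surface (the paper's $\mathcal{A}^{\lambda}$), study the two endpoints $\lambda\to 0+$ and $\lambda\to+\infty$, run a continuity argument in $\lambda$, and control the translational zero mode via a moving kernel formula tied to $dE/dc$. You also correctly identify the moving kernel formula as the crux. However, there are two substantive gaps in the mechanism you propose.

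First, the family $\mathcal{A}^{\lambda}$ is \emph{not} self-adjoint for $\lambda>0$; only the limit $\mathcal{A}^{0}$ is. Your Morse-index bookkeeping (an eigenvalue count $n^{-}(\mathcal{A}^{\lambda})$ that decreases monotonically as $\lambda$ increases, with each drop producing a kernel element) is the right intuition but does not literally apply, because for $\lambda>0$ the discrete spectrum of $\mathcal{A}^{\lambda}$ can be genuinely complex and eigenvalues can enter or leave the left half-plane in conjugate pairs across the imaginary axis away from the origin, without ever producing a kernel. The paper replaces Morse-index monotonicity with a \emph{parity} argument: it tracks the number $n_{\Omega}(\lambda)$ of eigenvalues (with multiplicity) of $\mathcal{A}^{\lambda}$ in a bounded region $\Omega$ of the open left half-plane, shows $n_{\Omega}=1$ for small $\lambda$ (using the moving kernel formula and the sign of $dE/dc$) and $n_{\Omega}=0$ for large $\lambda$, and then observes that both $\{\lambda:\,n_{\Omega}(\lambda)\ \text{odd}\}$ and $\{\lambda:\,n_{\Omega}(\lambda)\ \text{even}\}$ are open --- because crossings of the imaginary axis at nonzero height come in conjugate pairs --- so the only way these two sets can partition $(0,\infty)$ is if $\mathcal{A}^{\lambda}$ acquires a kernel somewhere. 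This parity step is essential and is absent from your proposal.

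Second, your claim that the essential spectrum ``touches $0$'' on the unbounded strip is the opposite of what actually happens, and if it were true the eigenvalue count would not even be well-defined. Proposition \ref{essential-spec} shows that $\sigma_{\text{ess}}(\mathcal{A}^{\lambda})$ lies in $\{\operatorname{Re}z\ge \tfrac{1}{2}(\tfrac{1}{h}-\tfrac{g}{c^{2}})\}$, and the quantity $\delta_{0}=\tfrac{1}{h}-\tfrac{g}{c^{2}}$ is strictly positive exactly because the solitary wave is supercritical ($F>1$, Property (P2)). This uniform gap is what keeps $\Omega$ purely in the discrete spectrum and makes $n_{\Omega}(\lambda)$ a finite integer. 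Relatedly, the convergence $\mathcal{A}^{\lambda}\to\mathcal{A}^{0}$ as $\lambda\to 0+$ is only strong (not in norm), so standard analytic perturbation theory fails; the paper invokes the Vock--Hunziker asymptotic perturbation theory (Proposition \ref{Prop-asym-perturb}) to conclude that discrete eigenvalues of $\mathcal{A}^{0}$ below $\tfrac{1}{2}\delta_{0}$ are stable under the family, and establishes the needed compactness through the weighted a priori estimate of Lemma \ref{lemma-apriori}. Your proposal would need to engage with both of these technical points --- the parity bookkeeping for a non-self-adjoint family, and the asymptotic (not regular) perturbation near $\lambda=0$ --- to become a complete argument.
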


Our next theorem shows that there exist unstable solitary waves approaching
the highest wave.

\begin{theorem}
\label{thm-minor}Under the assumption (H1), there exists infinitely many
intervals $I_{i}\ \subset(\mu_{1},+\infty),\ \left(  i=1,2,\cdots\right)  $
with $\lim_{n\rightarrow\infty}\max\left\{  \mu|\ \mu\in I_{n}\right\}  $
$=+\infty$, such that solitary waves with the parameter $\mu\in I_{i}$ are
linearly unstable in the sense of Theorem \ref{thm-main}.
\end{theorem}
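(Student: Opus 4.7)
The plan is to reuse the machinery developed for Theorem~\ref{thm-main}: a family of nonlocal dispersion operators depending on the parameter $\mu$, whose negative eigenvalues correspond to growing modes of the linearized problem (\ref{eqn-L}), controlled by a moving-kernel formula tied to $\partial_{\mu}$ applied to the wave. Granted that framework, instability on an interval $I \subset (\mu_1,+\infty)$ follows as soon as the Morse index of the dispersion operator on $I$ is strictly positive, and the task reduces to producing infinitely many such intervals accumulating at the highest wave.

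First, I would exploit (P4) together with the hypothesis (H1): under (H1) the infinitely many special points guaranteed by (P4) must all be turning points, so there is a sequence $\mu_1 < \mu_2^{*} < \mu_3^{*} < \cdots \to +\infty$ with $c'(\mu_i^{*}) = 0$. Between consecutive turning points $c(\mu)$ is strictly monotonic, and the local extrema of $c$ alternate; the analogous oscillatory behaviour of $E(\mu)$ near the highest wave, recorded in the references around \cite{long-fox-soli}, yields an interlaced sequence of critical points of $E$ on the same scale.

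Next, I would run the continuity argument from Theorem~\ref{thm-main} across each turning point. By the moving-kernel formula the dispersion operator has a nontrivial kernel precisely at the $\mu_i^{*}$'s and at critical points of $E$; between these parameters the Morse index is locally constant, and on crossing a turning point it jumps by $\pm 1$ in a direction fixed by the sign of $c''(\mu_i^{*})$ and a transversality quantity involving $E'(\mu_i^{*})$. Because $c$ and $E$ genuinely oscillate past $\mu_1$, infinitely many of these crossings contribute a positive jump; I then take $I_i$ to be any component on which the index stays $\ge 1$, and on such an $I_i$ the existence of a negative eigenvalue produces, via the construction of Theorem~\ref{thm-main}, a growing mode $e^{\lambda t}[\eta,\psi]$ with the stated smoothness.

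The main obstacle is the degeneration of the nonlocal dispersion operators as $\mu\to+\infty$: the profile stiffens into a $120^{\circ}$ corner, the relative crest speed $q_{c}$ vanishes, and the coefficients of the operators can blow up. I would not need uniform control all the way to $\mu=+\infty$, only on neighbourhoods of each $\mu_{i}^{*}$, so the goal is to verify that on every such neighbourhood the operators remain well-defined on a common function space and depend continuously on $\mu$, so that the perturbation-of-spectrum argument underlying the moving-kernel formula still applies. Once this local uniformity is secured, the oscillation of $c$ furnished by (P4) and (H1) automatically supplies the infinitely many intervals $I_{i}$ required by the theorem.
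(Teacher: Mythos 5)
Your overall framework is right---use (P4) together with (H1) to obtain infinitely many turning points, then run the moving-kernel formula and the continuity argument from Theorem~\ref{thm-main} near places where the kernel direction flips. But there is a substantive gap in the middle step: the continuity argument does \emph{not} produce a growing mode whenever the Morse index is merely ``strictly positive''; it requires the number of eigenvalues of $\mathcal{A}^{\lambda}(\mu)$ in the open left half-plane (for small $\lambda$) to be \emph{odd}. The whole point of the argument in the proof of Theorem~\ref{thm-main} is that eigenvalues of $\mathcal{A}^{\lambda}$ can only leave or enter the region $\Omega$ by crossing the imaginary axis, and off-origin crossings occur in complex-conjugate pairs, so the parity of $n_{\Omega}(\lambda)$ is invariant unless a kernel appears. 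For large $\lambda$ this count is $0$; hence one only obtains a contradiction if the small-$\lambda$ count is odd. If, say, $\mathcal{A}^{\lambda}$ has exactly two eigenvalues in the left half-plane for small $\lambda$, they can migrate out together as a conjugate pair without ever producing a purely growing mode, so ``index $\geq 1$'' is not sufficient.

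The paper handles this correctly by tracking the exact parity. On each interval $(\mu_{i},\mu_{i+1})$ between consecutive turning points (where, by (H1), $\ker\mathcal{A}^{0}=\{\psi_{ex}\}$), the number $n^{-}(\mu)$ of negative eigenvalues of $\mathcal{A}^{0}(\mu)$ is constant, and the small-$\lambda$ left-half-plane count is
\[
\tilde{n}^{-}(\mu)=n^{-}(\mu)+\tfrac{1}{2}\bigl(1+\operatorname{sign}\bigl(E'(\mu)/c'(\mu)\bigr)\bigr),
\]
by Lemma~\ref{lemma-moving}. At each critical point $\tilde{\mu}_{k}$ of $E$ the quantity $\operatorname{sign}(E'/c')$ flips while $n^{-}$ stays put, so $\tilde{n}^{-}$ changes by exactly one across $\tilde{\mu}_{k}$; on one side $\tilde{n}^{-}$ is odd, and \emph{that} is the interval $I_{k}$. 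So the intervals are localized at the critical points of $E$ (not at the turning points of $c$, as you suggest), and the selection criterion is odd parity, not positivity. You would need to repair your argument along these lines. Also, your concern at the end about degeneration of the operators as $\mu\to\infty$ is unnecessary: the proof of Theorem~\ref{thm-main} is run separately for each fixed $\mu\in I_{k}$, so no uniformity in $\mu$ is required.
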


Theorem \ref{thm-minor} suggests that the highest wave
$(\mu=\infty)$ constructed in \cite{atf} is unstable. This contrasts
with the stability of peaked solitary waves in some shallow water
wave models (\cite{cs-peakon}, \cite{lin-dp}). Numerical evidences
(\cite{tanaka86}, \cite{long-tanaka}) suggest that solitary waves
are spectrally stable when $\mu\in\left(  \frac
{6}{\pi},\tilde{\mu}_{1}\right)  $, and linearly unstable when
$\mu>\tilde {\mu}_{1}$, at least before the first few turning points
where the computations are reliable. We note that the amplitude of
the maximal energy wave with the parameter $\tilde{\mu}_{1}$ is
already close to the maximal height (\cite{tanaka86}). So the
unstable waves proved in Theorems \ref{thm-main} and \ref{thm-minor}
are of large amplitude, and therefore this type of instability can
not appear in approximate models which are derived under the small
amplitude assumptions, such as KDV equation or Boussinesq systems.
Numerical evidences (\cite{tanaka-dold-pere87}) also suggest that
this large amplitude instability can lead to wave breaking. Such
wave breaking induced by large unstable waves had also been used to
explain the breaking of waves approaching beaches (\cite{duncan},
\cite{peregrine}, \cite{peregrine2}). More discussions of these
issues are found in Remarks \ref{remark-appx} and \ref{remark-break}
(Section 5).

The proof of Theorems \ref{thm-main} and \ref{thm-minor} also has some
implications for the spectral stability of solitary waves with $\mu<\tilde
{\mu}_{1}$. We note that the traveling waves of full water waves even with
vorticity are shown (\cite{bona-sachs}, \cite{hur-lin}) to be always highly
indefinite energy saddles under the constraints of constant momentum, mass
etc. Therefore, their stability cannot be studied by the traditional method of
proving (constrained) energy minimizers as in many model equations such as the
KDV type equations (\cite{benjamin72}, \cite{bss87}). So far there are few
effective methods for proving nonlinear stability of energy saddles. So
naturally, the first step is to study their spectral stability, namely, to
show that there does not exist an exponentially growing solution to the
linearized problem. The following theorem might be useful for this purpose.

\begin{theorem}
\label{thm-transition}Assume the hypothesis (H1). Suppose that there is a
sequence of purely growing modes $e^{\lambda_{n}t}\left[  \eta_{n}\left(
x\right)  ,\psi_{n}\left(  x,y\right)  \right]  $ $\left(  \lambda
_{n}>0\right)  \ $to the linearized problem for solitary waves with parameters
$\left\{  \mu_{n}\right\}  $, with $\lambda_{n}\rightarrow0+$ and $\mu
_{n}\rightarrow\mu_{0}$ where $\mu_{0}$ is not a turning point, then we must
have $\frac{\partial E}{\partial\mu}\left(  \mu_{0}\right)  =0$.
\end{theorem}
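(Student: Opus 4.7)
The plan is to exploit the family of nonlocal dispersion operators $\mathcal{A}^{\mu,\lambda}$ built in the proof of Theorem~\ref{thm-main}, together with the moving kernel identity that couples their $\lambda$-derivative to the energy. A purely growing mode $e^{\lambda_n t}[\eta_n,\psi_n]$ is encoded as a nontrivial element $\phi_n\in\ker\mathcal{A}^{\mu_n,\lambda_n}$, where $\phi_n$ represents (say) the trace of $\psi_n$ on the undisturbed free surface; since the background wave is symmetric and $\lambda_n$ is real, one may take $\phi_n$ even in~$x$.

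The first step is to extract a limit. Normalise $\|\phi_n\|=1$ in the natural Hilbert space. The family $(\mu,\lambda)\mapsto\mathcal{A}^{\mu,\lambda}$ depends continuously on its parameters and is a compact perturbation of a fixed invertible reference operator, thanks to the elliptic nature of (\ref{stream}) and the exponential decay in (P1). A compactness argument produces, along a subsequence, $\phi_n\to\phi_0\neq 0$ with $\mathcal{A}^{\mu_0,0}\phi_0=0$ and $\phi_0$ even.

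The second step is to identify $\phi_0$. The kernel of the static operator $\mathcal{A}^{\mu_0,0}$ is the linearised tangent space to the primary branch at $\mu_0$: it contains the translation mode $\partial_x\eta_{\mu_0}$ (odd, hence excluded) and the parameter mode $\partial_\mu\eta_{\mu_0}$ (even). By (H1) there is no secondary bifurcation, and the non-turning assumption $c'(\mu_0)\neq 0$ rules out further degeneration of the parameter family, so the even part of the kernel is exactly spanned by $\partial_\mu\eta_{\mu_0}$ and $\phi_0$ must be a nonzero multiple of it.

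Finally, apply the moving kernel formula. Writing $\mathcal{A}^{\mu,\lambda}=\mathcal{A}^{\mu,0}+\lambda\mathcal{B}^{\mu}+O(\lambda^{2})$ and pairing $\mathcal{A}^{\mu_n,\lambda_n}\phi_n=0$ with a cokernel element $\chi_{\mu_n}\in\ker(\mathcal{A}^{\mu_n,0})^{*}$ annihilates the leading term; dividing by $\lambda_n$ and passing to the limit yields
\[
\langle\mathcal{B}^{\mu_0}\phi_0,\chi_{\mu_0}\rangle=0.
\]
The moving kernel identity then identifies this pairing with an explicit nonzero multiple of $E'(\mu_0)$, the multiplicative factor being built from $c(\mu_0)$ and $c'(\mu_0)$ and nonvanishing precisely because $\mu_0$ is not a turning point. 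Hence $E'(\mu_0)=0$. The main technical obstacle is the moving kernel identity itself: one must differentiate the Bernoulli condition and the elliptic problem (\ref{stream}) simultaneously in $\mu$ and~$\lambda$, and recognise the resulting boundary integral as the variation of the energy (\ref{energy}) along the primary branch. The Hamiltonian structure of the water-wave system, together with the exponential spatial decay of the wave, is what closes this computation.
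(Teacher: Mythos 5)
There are three gaps in the proposal, the first two fatal.

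First, you assert that $\phi_n$ may be taken even in $x$. This fails: while the limiting operator $\mathcal{A}^{0}$ does commute with parity (it is $\mathcal{N}$ plus an even multiplier), the family $\mathcal{A}^{\lambda}$ for $\lambda>0$ does not, because $\mathcal{C}^{\lambda}=(\lambda+\frac{d}{dx}(\psi_{ey}\cdot))^{-1}\frac{d}{dx}$ mixes parities when $\lambda\neq 0$. So the kernel elements of $\mathcal{A}^{\mu_n,\lambda_n}$ have no intrinsic parity, and the paper's proof makes no such claim.

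Second, and more seriously, you identify the limit $\phi_0$ as a multiple of the parameter mode $\partial_\mu\eta_{\mu_0}$. But by Lemma~\ref{lemma-A0-property}(iii), at a \emph{non}-turning point $\mu_0$ the kernel of $\mathcal{A}^{0}(\mu_0)$ is exactly $\{\psi_{ex}(\xi)\}$, which is odd; the parameter derivative $\partial_\mu\bar\psi_e$ only belongs to $\ker\mathcal{A}^{0}$ when $c'(\mu)=0$, i.e.\ precisely \emph{at} a turning point (this is what the Appendix computation $\mathcal{A}_e^{0}\partial_\mu\bar\psi_e=0$ requires, via $\delta c=O(|\delta\mu|^2)$). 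Away from turning points, $\partial_c\bar\psi_e$ solves the \emph{inhomogeneous} equation~(\ref{eqn-d-phi-dc}). Thus $\phi_0$ must be a multiple of $\psi_{ex}$, not $\partial_\mu\eta$; under your parity assumption you would instead conclude $\phi_0=0$, contradicting the normalization.

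Third, your expansion $\mathcal{A}^{\mu,\lambda}=\mathcal{A}^{\mu,0}+\lambda\mathcal{B}^{\mu}+O(\lambda^2)$ paired against a cokernel element and divided by $\lambda_n$ only produces the first-order pairing $\langle\mathcal{B}^{\mu_0}\phi_0,\chi_{\mu_0}\rangle$. The paper shows (in the proof of Lemma~\ref{lemma-moving}, via the quantity $m(\lambda)$) that this first-order contribution is \emph{zero}: $k_\lambda/\lambda\to0$. The energy derivative only appears at second order, which is why the proof must introduce the decomposition $u_\lambda=\bar c_\lambda\psi_{ex}+\lambda\bar v_\lambda$ with $\bar v_\lambda\to\partial_c\bar\psi_e$, and then compute the pairing at order $\lambda^{2}$ to obtain $-\frac1c\frac{dE}{dc}/\|\psi_{ex}\|_{L^2}^{2}$. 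A single division by $\lambda_n$ cannot reach the energy derivative. To repair the argument you would need to carry out this two-step expansion around $\psi_{ex}$ (and establish the required a priori bounds in $L^2_e$ and $H^{1/2}$ uniformly in both $\lambda_n$ and $\mu_n$), which is exactly what the paper's proof of Theorem~\ref{thm-transition} does.
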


By the above theorem, if an oscillatory instability can be excluded, that is,
any growing mode is shown to be purely growing, then the transition of
instability can only happen at the energy extrema or turning points. Numerical
results in \cite{tanaka86}, \cite{long-tanaka} justify that the growing modes
found are indeed purely growing for solitary waves before the first few
turning points. If additionally the spectral stability of small solitary waves
can be proved, then it follows that the solitary waves are spectrally stable
up to the wave of maximal energy. \

\textbf{Comments and ideas of the proof. }\ \ First, we comments on related
results in the literature. In \cite{saff85}, Saffman considered the spectral
stability of periodic waves in deep water (Stokes waves), under perturbations
of the same period (so called superharmonic perturbations). The picture of
superharmonic instability of Stokes waves (\cite{tanaka83}) is similar to that
of the instability of solitary waves. The approach of \cite{saff85} is to take
the finite mode truncation of the linearized Hamiltonian formulation of
Zakharov (\cite{zakrahov}) and study the eigenvalue problem for the matrix
obtained. By assuming the existence of a sequence of purely growing modes with
the growth rate $\lambda_{n}\rightarrow0+$ and parameters $\mu_{n}%
\rightarrow\mu_{0}$, the solvability conditions are checked to the
second order to show that $\mu_{0}$ must be an energy extremum. That
is, an analogue of Theorem \ref{thm-transition} was established in
\cite{saff85} for Stokes waves. However, the analysis in
\cite{saff85} is at a rather formal level. First, Zakharov's
Hamiltonian formulation has a highly indefinite quadratic form that
is unbounded from both below and above. This is due to the
indefiniteness of the energy functional of the pure gravity water
wave problem (\cite{bona-sachs}) as mentioned before. So it is
unclear how to pass the finite truncation results in \cite{saff85}
to the original water wave problem. Secondly, an implicit assumption
in \cite{saff85} is that the truncated matrix has the zero
eigenvalue of geometric multiplicity $1$. It is unclear how to check
and relate this assumption to the properties of steady waves. For
solitary waves, the truncation approach of \cite{saff85} seems
difficult to apply because of the unbounded domain. Recently, in
\cite{kataoka-peri}, \cite{kataoka-soli}, Kataoka recovered
Saffman's formal result (or analogues of Theorem
\ref{thm-transition}) for periodic waters in water of finite depth
and for interfacial solitary waves in a different way. The analysis
of \cite{kataoka-peri}, \cite{kataoka-soli} is again formal and of
similar nature as \cite{saff85}. That is, by assuming the existence
of purely growing modes with vanishing growth rates, the first two
solvability conditions were checked to show that the limiting
parameter is an energy extremum. We note that in the above papers of
Kataoka and Saffman, the existence of a sequence of \textit{purely}
growing modes with vanishing growth rates was only \textit{assumed
}but never proved.  Moreover, their analysis are perturbative, only
for travelling waves near energy extrema. In this paper, we
rigorously prove the linear instability of large solitary waves and
our method is non-perturbative, which can apply to solitary waves
far from the energy extrema.

Below, we briefly discuss main ideas in the proof of Theorems \ref{thm-main}
and \ref{thm-minor}. To avoid the issue of indefiniteness of energy
functional, we do not adapt Zakharov's Hamiltonian formulation in terms of the
vector potential $\phi$ on the free surface and the wave profile $\eta$. We
use the linearized system derived in \cite{hur-lin}, in terms of the
infinitesimal perturbations of the wave profile $\eta$ and the stream function
$\psi$ restricted on the steady surface $\mathcal{S}_{e}.$Then we further
reduce this system to get a family of operator equations $\mathcal{A}%
_{e}^{\lambda}\left(  \psi|_{\mathcal{S}_{e}}\right)  =0$, where $\lambda>0$
is the unstable eigenvalue to be found$.$The operator $\mathcal{A}%
_{e}^{\lambda}$ is the sum of the Dirichlet-Neumann operator and a bounded but
nonlocal operator. The idea of above reduction is to relate the eigenvalue
problems to the elliptic type problems for steady waves. The hodograph
transformation is then used to get equivalent operators $\mathcal{A}^{\lambda
}$ defined on the whole line. The existence of a purely growing mode is
equivalent to find some $\lambda>0$ such that the operator $\mathcal{A}%
^{\lambda}$ has a nontrivial kernel. This is achieved by using a continuity
argument to exploit the difference of the spectra of $\mathcal{A}^{\lambda}$
near infinity and zero.

The idea of introducing nonlocal dispersion operators with a continuity
argument to get instability criteria originates from our previous works
(\cite{lin-sima}, \cite{lin-cmp}, \cite{lin-bgk}) on 2D ideal fluid and 1D
electrostatic plasma, which have also been extended to galaxy dynamics
\cite{guo-lin} and 3D electromagnetic plasmas \cite{lin-strauss1},
\cite{lin-strauss2}. The new issue in the current case is the influence of the
symmetry of the problem. More specifically, we need to understand the movement
of the kernel of $\mathcal{A}^{0}$ that is due to the translation symmetry,
under the perturbation of $\mathcal{A}^{0}$ to $\mathcal{A}^{\lambda}$ for
small $\lambda$. This is obtained in a moving kernel formula (Lemma
\ref{lemma-moving}). The convergence of $\mathcal{A}^{\lambda}$ to
$\mathcal{A}^{0}$ is very weak, so the usual perturbation theories do not
apply and the asymptotic perturbation theory by Vock and Hunziker
(\cite{vock-hunz82}) has to be used to study perturbations of the eigenvalues
of $\mathcal{A}^{0}$. An important technical part in our proof is to use the
supercritical property $F>1$ and the decay of solitary waves to obtain a
priori estimates and gain certain compactness. In particular, $F>1$ implies
that the essential spectra of the operators $\mathcal{A}^{\lambda}$ lie in the
right half complex plane. The techniques developed in this paper have been
recently extended to show instability of large Stokes waves (\cite{lin-stokes}%
) and get instability criteria for periodic and solitary waves of rather
general dispersive wave equations (\cite{lin-dispersive}, \cite{lin-periodic}).

In Lemma \ref{lemma-A0-property}, we prove that the zero-limiting
operator $\mathcal{A}^{0}\ $is exactly the \textit{same} operator
used in \cite{plot91} for studying the secondary bifurcation of
solitary waves. This link is interesting and a little unexpected
since our derivation of the operator $\mathcal{A}^{0}$ is totally
unrelated to the formulation used in \cite{plot91}. We note that the
bifurcation results in \cite{plot91} have no implications for
instability of solitary waves. Indeed, for water wave problems,
there seems to be no definite relations between the stability and
bifurcation of travelling waves. For example, it was shown in
\cite[Remark 4.13]{hur-lin} that the bifurcation of nontrivial
traveling water waves are unrelated to the exchange of stability of
trivial flows. From numerical works (\cite{tanaka86},
\cite{saff-chen}, \cite{kataoka-soli}), the exchange of instability
at energy extrema for solitary waves does not imply any secondary
bifurcation there.

This paper is organized as follows. In Section 2, we give the formulation of
the linearized problem and derive the nonlocal dispersion operators
$\mathcal{A}^{\lambda}$. Section 3 is devoted to study properties of the
operators $\mathcal{A}^{\lambda}$, in particular, their essential spectrum. In
Section 4, we apply the asymptotic perturbation theory to study the
eigenvalues of $\mathcal{A}^{\lambda}$ for $\lambda$ near $0$. In Section 5,
we derive a moving kernel formula and prove the main theorems. Some important
formulae are proved in Appendix.

\section{Formulation for linear instability}

In this Section, a solitary wave solution of (\ref{stream}) is held fixed, as
such it serves as the undisturbed state about which the system (\ref{E:Euler1}%
)--(\ref{E:bottom}) is linearized. The derivation is performed in the moving
frame of references, in which the wave profile appears to be stationary and
the flow is steady. Let us denote the undisturbed wave profile and relative
stream function by $\eta_{e}(x)$ and $\psi_{e}(x,y)$, respectively, which
satisfy the system (\ref{stream}). The steady relative velocity field is
\[
(u_{e}(x,y)+c,v_{e}(x,y))=(\psi_{ey}(x,y),-\psi_{ex}(x,y)),
\]
and the steady pressure $P_{e}(x,y)$ is determined through
\begin{equation}
P_{e}(x,y)=\frac{1}{2}c^{2}-\tfrac{1}{2}|\nabla\psi_{e}(x,y)|^{2}-gy.
\label{pressure}%
\end{equation}
Let
\[
\mathcal{D}_{e}=\{(x,y):-\infty<x<+\infty\,,\,-h<y<\eta_{e}(x)\}
\]
and%
\[
\mathcal{S}_{e}=\{(x,\eta_{e}(x)):-\infty<x<+\infty\}
\]
denote, respectively, the undisturbed fluid domain and the steady wave profile.

Let us denote
\[
\left(  \eta(t;x),u(t;x,y),v(t;x,y),P(t;x,y)\right)
\]
to be the infinitesimal perturbations of the wave profile, the velocity field
and the pressure respectively. The stream function perturbation is
$\psi\left(  t;x,y\right)  $, such that $\left(  u,v\right)  =\left(  \psi
_{y},-\psi_{x}\right)  .$

The linearized water-wave problem was derived in \cite{hur-lin}, and it takes
the following form in the irrotational case:
\begin{subequations}
\label{eqn-L}%
\begin{equation}
\Delta\psi=0\qquad in\quad\mathcal{D}_{e}, \label{eqn-L-vor}%
\end{equation}%
\begin{equation}
\partial_{t}\eta+\partial_{\tau}(\psi_{ey}\eta)+\partial_{\tau
}\psi=0\qquad\text{on}\quad\mathcal{S}_{e}; \label{eqn-L-eta}%
\end{equation}%
\begin{equation}
P+P_{ey}\eta=0\qquad\text{on}\quad\mathcal{S}_{e};
\label{eqn-L-P}%
\end{equation}%
\begin{equation}
\partial_{t}\partial_{n}\psi+\partial_{\tau}(\psi_{ey}%
\partial_{n}\psi)+\partial_{\tau}P=0\qquad\text{on}\quad\mathcal{S}_{e};
\label{eqn-L-phi-n}%
\end{equation}%
\begin{equation}
\partial_{x}\psi=0\qquad\text{on}\quad\{y=-h\},
\label{eqn-L-bott}%
\end{equation}
\end{subequations}
where%

\[
\partial_{\tau}f=\partial_{x}f+\eta_{ex}\partial_{y}f\ \ \ \text{and
\ }\partial_{n}f=\partial_{y}f-\eta_{ex}\partial_{x}f
\]
denote the tangential and normal derivatives of a function $f\left(
x,y\right)  $ on the curve $\{y=\eta_{e}(x)\}$. Alternatively, $\partial
_{\tau}f(x)=\frac{d}{dx}f(x,\eta_{e}(x))$. Note that the above linearized
system may be viewed as one for $\psi(t;x,y)$ and $\eta(t;x)$. Indeed,
$P(t;x,\eta_{e}(x))$ is determined through (\ref{eqn-L-P}) in terms of
$\eta(t;x)$ and other physical quantities are similarly determined in terms of
$\psi(t;x,y)$ and $\eta(t;x)$.

A \emph{growing mode} refers to a solution to the linearized water-wave
problem (\ref{eqn-L-vor})-(\ref{eqn-L-bott}) of the form
\[
(\eta(t;x),\psi(t;x,y))=(e^{\lambda t}\eta(x),e^{\lambda t}\psi(x,y))
\]
and $P(t;x,\eta_{e}(x))=e^{\lambda t}P(x,\eta_{e}(x))$ with $\operatorname{Re}%
\,\lambda>0$. For a growing mode, the linearized system (\ref{eqn-L}) becomes%
\begin{equation}
\Delta\psi=0\qquad in\quad\mathcal{D}_{e} \label{eqn-G-phi}%
\end{equation}
and the following boundary conditions on $\mathcal{S}_{e},$
\begin{equation}
\lambda\eta(x)+\frac{d}{dx}\left(  \psi_{ey}(x,\eta_{e}(x))\eta(x)\right)
=-\frac{d}{dx}\psi(x,\eta_{e}(x)), \label{eqn-G-eta}%
\end{equation}%
\begin{equation}
P(x,\eta_{e}(x))+P_{ey}(x,\eta_{e}(x))\eta(x)=0, \label{eqn-G-P}%
\end{equation}%
\begin{equation}
\lambda\psi_{n}(x)+\frac{d}{dx}\left(  \psi_{ey}(x,\eta_{e}(x))\psi
_{n}(x)\right)  =-\frac{d}{dx}P(x,\eta_{e}(x)). \label{eqn-G-phi-n}%
\end{equation}
We impose the following boundary condition on the flat bottom
\begin{equation}
\psi(x,-h)=0, \label{eqn-G-bottom}%
\end{equation}
from which (\ref{eqn-L-bott}) follows. In summary, the growing-mode problem
for a solitary water-wave is to find a nontrivial solution of (\ref{eqn-G-phi}%
)-(\ref{eqn-G-bottom}) with $\operatorname{Re}\,\lambda>0$. Below, we look for
purely growing modes with $\lambda>0$ and reduce the system (\ref{eqn-G-phi}%
)-(\ref{eqn-G-bottom}) to one single equation for $\psi|_{\mathcal{S}_{e}}$.
For simplicity, here and in the sequel we identify $\psi_{ey}(x)$ with
$\psi_{ey}(x,\eta_{e}(x))$ and $\phi(x)$ with $\phi(x,\eta_{e}(x))$, etc.
First, we introduce the following operator%
\begin{equation}
\mathcal{C}^{\lambda}=\left(  \lambda+\frac{d}{dx}\left(  \psi_{ey}%
(x)\cdot\right)  \right)  ^{-1}\frac{d}{dx}. \label{defn-c-lb-direct}%
\end{equation}
Note that $\psi_{ey}>0$ in $\mathcal{D}_{e}\ $by the maximum principle and
Hopf's principle (\cite{craige88}), and the fact that $\psi_{ey}%
=u_{e}+c\rightarrow c$ as $\left\vert x\right\vert \rightarrow\infty$. Thus
\begin{equation}
c_{0}\leq\psi_{ey}(x,\eta_{e}(x))=u_{e}+c\leq c_{1}, \label{bound-phi-e}%
\end{equation}
for some constant $c_{0},c_{1}>0$. Defining the operator
\[
\mathcal{D}\phi=\frac{d}{dx}\left(  \psi_{ey}(x)\phi(x)\right)  ,
\]
we can write $\mathcal{C}^{\lambda}$ as
\begin{equation}
\mathcal{C}^{\lambda}=\frac{\mathcal{D}}{\lambda+\mathcal{D}}\frac{1}%
{\psi_{ey}(x)}=\left(  1-\frac{\mathcal{\lambda}}{\lambda+\mathcal{D}}\right)
\frac{1}{\psi_{ey}(x)}. \label{defn-C-lb}%
\end{equation}
Denote $L_{\psi_{ey}}^{2}(\mathcal{S}_{e})$ to be the $\psi_{ey}-$weighted
$L^{2}$ space on $\mathcal{S}_{e}$. Because of the bound (\ref{bound-phi-e}),
$L_{\psi_{ey}}^{2}(\mathcal{S}_{e})$ and $L^{2}(\mathcal{S}_{e})$ are norm
equivalent. Note that the operator $\mathcal{D}$ is anti-symmetric on
$L_{\psi_{ey}}^{2}(\mathcal{S}_{e})$.

\begin{lemma}
\label{lemma-e-lb}For $\lambda>0,$ define the operator $\mathcal{E}%
^{\lambda,\pm}:L_{\psi_{ey}}^{2}(\mathcal{S}_{e})\rightarrow L_{\psi_{ey}}%
^{2}(\mathcal{S}_{\epsilon})$ by%

\[
\mathcal{E}^{\lambda,\pm}=\frac{\mathcal{\lambda}}{\lambda\pm\mathcal{D}}.
\]
Then,

\textrm{{(a)} The operator }$\mathcal{E}^{\lambda,\pm}$\textrm{ is continuous
in }$\lambda$, \textrm{
\begin{equation}
\left\Vert \mathcal{E}^{\lambda,\pm}\right\Vert _{L_{\psi_{ey}}^{2}%
(\mathcal{S}_{e})\rightarrow L_{\psi_{ey}}^{2}(\mathcal{S}_{e})}%
\leq1,\ \label{estimate-E-lb}%
\end{equation}
and}%
\begin{equation}
\left\Vert 1-\mathcal{E}^{\lambda,\pm}\right\Vert _{L_{\psi_{ey}}%
^{2}(\mathcal{S}_{e})\rightarrow L_{\psi_{ey}}^{2}(\mathcal{S}_{e})}\leq1.
\label{estimate-E-lb-one}%
\end{equation}

\textrm{{(b)} When }$\lambda\rightarrow0+$, $\mathcal{E}^{\lambda,\pm}$
converges to $0$ strongly in $L_{\psi_{ey}}^{2}(\mathcal{S}_{e})$.

(c) When $\lambda\rightarrow+\infty,$ $\mathcal{E}^{\lambda,\pm}$ converges to
$1$ strongly in $L_{\psi_{ey}}^{2}(\mathcal{S}_{e})$.
\end{lemma}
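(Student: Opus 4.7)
The key structural fact is that $\mathcal{D}$ is anti-symmetric on $L_{\psi_{ey}}^{2}(\mathcal{S}_{e})$ and, with its natural dense domain, is essentially skew-adjoint---the latter being standard for first-order operators with smooth coefficients bounded away from $0$, which (\ref{bound-phi-e}) provides. Consequently, for every $\lambda>0$ the operator $\lambda\pm\mathcal{D}$ is boundedly invertible with $\Vert(\lambda\pm\mathcal{D})^{-1}\Vert\leq 1/\lambda$, so $\mathcal{E}^{\lambda,\pm}$ is a well-defined bounded operator, and all three parts reduce to short Hilbert-space manipulations built on this fact.

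For part (a) I would prove the two norm bounds (\ref{estimate-E-lb}) and (\ref{estimate-E-lb-one}) in one stroke by a Pythagorean identity. Writing $f=g+h$ with $g=\mathcal{E}^{\lambda,\pm}f$ and $h=(1-\mathcal{E}^{\lambda,\pm})f$, the defining relation $(\lambda\pm\mathcal{D})g=\lambda f$ rearranges to $\pm\mathcal{D}g=\lambda h$. Since anti-symmetry of $\mathcal{D}$ forces $\langle g,\mathcal{D}g\rangle_{\psi_{ey}}$ to be purely imaginary, the cross term $\langle g,h\rangle_{\psi_{ey}}=\pm\lambda^{-1}\langle g,\mathcal{D}g\rangle_{\psi_{ey}}$ has vanishing real part, so
\[
\Vert f\Vert^{2}=\Vert g\Vert^{2}+\Vert h\Vert^{2},
\]
which delivers both bounds at once. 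Continuity of $\lambda\mapsto\mathcal{E}^{\lambda,\pm}$ on $(0,\infty)$ is then immediate from the resolvent identity
\[
\mathcal{E}^{\lambda_{1},\pm}-\mathcal{E}^{\lambda_{2},\pm}=(\lambda_{1}-\lambda_{2})\bigl(\pm\mathcal{D}(\lambda_{1}\pm\mathcal{D})^{-1}\bigr)(\lambda_{2}\pm\mathcal{D})^{-1},
\]
together with the bounds $\Vert\pm\mathcal{D}(\lambda\pm\mathcal{D})^{-1}\Vert=\Vert1-\mathcal{E}^{\lambda,\pm}\Vert\leq 1$ and $\Vert(\lambda\pm\mathcal{D})^{-1}\Vert\leq 1/\lambda$ just established.

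For (b) and (c) my strategy is the standard one: prove strong convergence on a dense subspace, then extend using the uniform bound $\Vert\mathcal{E}^{\lambda,\pm}\Vert\leq 1$ from (a). For (c), if $u\in\mathrm{Dom}(\mathcal{D})$ then $(1-\mathcal{E}^{\lambda,\pm})u=\pm(\lambda\pm\mathcal{D})^{-1}\mathcal{D}u$ has norm at most $\Vert\mathcal{D}u\Vert/\lambda\to 0$ as $\lambda\to\infty$, and density of $\mathrm{Dom}(\mathcal{D})$ concludes. For (b), if $u=\mathcal{D}w$ lies in the range of $\mathcal{D}$, the analogous commutation yields $\mathcal{E}^{\lambda,\pm}u=\pm\lambda(1-\mathcal{E}^{\lambda,\pm})w$ with norm at most $\lambda\Vert w\Vert\to 0$ as $\lambda\to 0+$. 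To extend, I need the range of $\mathcal{D}$ to be dense, equivalently $\ker\mathcal{D}=\{0\}$; but $\mathcal{D}\phi=0$ forces $\psi_{ey}\phi\equiv\mathrm{const}$, and the only constant function in $L^{2}(\mathbf{R})$ is $0$ (using (\ref{bound-phi-e}) to see $\psi_{ey}\phi\in L^{2}$).

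No individual step is genuinely hard; the only point that requires care is the setup---identifying a domain on which $\mathcal{D}$ is essentially skew-adjoint rather than merely formally anti-symmetric. After the unitary change of variable $\phi\mapsto\psi_{ey}\phi$, which sends $L_{\psi_{ey}}^{2}(\mathcal{S}_{e})$ to $L_{1/\psi_{ey}}^{2}(\mathbf{R})$ and conjugates $\mathcal{D}$ to $\psi_{ey}\partial_{x}$, this reduces to the classical essential skew-adjointness of such first-order operators on the line with smooth positive coefficient bounded above and below. Once this setup is fixed, the rest of the lemma follows from the short Hilbert-space arguments sketched above.
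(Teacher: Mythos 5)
Your proof is correct, and it takes a genuinely different route from the paper's. The paper's proof passes directly to the spectral theorem: it writes $\mathcal{R}=-i\mathcal{D}$ as a self-adjoint operator with spectral measure $\{M_\alpha\}$, represents $\|\mathcal{E}^{\lambda,\pm}\phi\|^2$ as $\int |\lambda/(\lambda\pm i\alpha)|^2\,d\|M_\alpha\phi\|^2$, reads off the norm bounds from $|\lambda/(\lambda\pm i\alpha)|\le 1$ and $|1-\lambda/(\lambda\pm i\alpha)|\le 1$, and obtains (b) and (c) by dominated convergence, identifying the limiting quantity as $\|M_{\{0\}}\phi\|^2$ with $M_{\{0\}}$ the projection onto $\ker\mathcal{D}=\{0\}$. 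You avoid the spectral theorem entirely: the Pythagorean identity $\|f\|^2=\|\mathcal{E}^{\lambda,\pm}f\|^2+\|(1-\mathcal{E}^{\lambda,\pm})f\|^2$, derived from the skew-symmetry of $\mathcal{D}$ alone, yields both bounds at once; the strong limits then follow from density plus the uniform bound, using the elementary identities $(1-\mathcal{E}^{\lambda,\pm})u = \pm(\lambda\pm\mathcal{D})^{-1}\mathcal{D}u$ on $\mathrm{Dom}(\mathcal{D})$ and $\mathcal{E}^{\lambda,\pm}\mathcal{D}w = \pm\lambda(1-\mathcal{E}^{\lambda,\pm})w$ on the range of $\mathcal{D}$. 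Both proofs rest on the same two structural facts—skew-adjointness of $\mathcal{D}$ (needed for the resolvent to exist and in the paper for the spectral theorem, in your argument for the Pythagorean step) and triviality of $\ker\mathcal{D}$ (which the paper asserts and you actually verify via (\ref{bound-phi-e}))—so the content is the same, but your argument is more elementary and self-contained, and it additionally establishes operator-norm continuity of $\lambda\mapsto\mathcal{E}^{\lambda,\pm}$ via the resolvent identity, which the paper states but does not prove. The one caveat you rightly flag is that "anti-symmetric" must be upgraded to "essentially skew-adjoint" on a suitable domain for any of this to make literal sense; the paper also implicitly assumes this, and your reduction to $\psi_{ey}\partial_x$ on $L^2_{1/\psi_{ey}}(\mathbf{R})$ is the clean way to see it.
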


\begin{proof}
Denote $\left\{  M_{\alpha};\alpha\in\mathbf{R}^{1}\right\}  $ to be the
spectral measure of the self-adjoint operator $\mathcal{R}=-i\mathcal{D}$ on
$L_{\psi_{ey}}^{2}(\mathcal{S}_{e})$. Then
\[
\left\Vert \mathcal{E}^{\lambda,\pm}\phi\right\Vert _{L_{\psi_{ey}}^{2}}%
^{2}=\int_{\mathbb{R}}\left\vert \frac{\lambda}{\lambda\pm i\alpha}\right\vert
^{2}d\Vert M_{\alpha}\phi\Vert_{L_{\psi_{ey}}^{2}}^{2}\leq\int_{\mathbb{R}%
}d\Vert M_{\alpha}\phi\Vert_{L_{\psi_{ey}}^{2}}^{2}=\left\Vert \phi\right\Vert
_{L_{\psi_{ey}}^{2}}^{2}%
\]
and (\ref{estimate-E-lb}) follows. Similarly, we get the estimate
(\ref{estimate-E-lb-one}). To prove (b), we take any $\phi\in L_{\psi_{ey}%
}^{2}(\mathcal{S}_{e})$ and denote the function $\xi(\alpha)$ to be such that
$\xi(\alpha)=0$ for $\alpha\neq0$ and $\xi(0)=1$. Then by the dominant
convergence theorem, when $\lambda\rightarrow0+,$
\begin{align*}
\left\Vert \mathcal{E}^{\lambda,\pm}\phi\right\Vert _{L_{\psi_{ey}}^{2}}^{2}
&  =\int_{\mathbb{R}}\left\vert \frac{\lambda}{\lambda\pm i\alpha}\right\vert
^{2}d\Vert M_{\alpha}\phi\Vert_{L_{\psi_{ey}}^{2}}^{2}\\
&  \rightarrow\int_{\mathbb{R}}\xi(\alpha)d\Vert M_{\alpha}\phi\Vert
_{L_{\psi_{ey}}^{2}}^{2}=\left\Vert M_{\{0\}}\phi\right\Vert _{L_{\psi_{ey}%
}^{2}}^{2}.
\end{align*}
Note that $M_{\{0\}}$ is the projector of $L_{\psi_{ey}}^{2}$ to
$\ker\mathcal{D=}\left\{  0\right\}  $. So $M_{\{0\}}=0$ and $\mathcal{E}%
^{\lambda,\pm}\phi\rightarrow0$ in $L_{\psi_{ey}}^{2}$. The proof of (c) is
similar to that of (b) and we skip it.
\end{proof}

By the above lemma and the bound (\ref{bound-phi-e}) on $\psi_{ey},\ $we have

\begin{lemma}
\label{lemma-C-lb}For $\lambda>0$, the operator $\mathcal{C}^{\lambda}%
:L^{2}(\mathcal{S}_{e})\rightarrow L^{2}(\mathcal{S}_{e})$ defined by
(\ref{defn-c-lb-direct}) has the following properties:

(a)
\[
\left\Vert \mathcal{C}^{\lambda}\right\Vert _{L^{2}(\mathcal{S}_{e}%
)\rightarrow L^{2}(\mathcal{S}_{e})}\leq C,
\]
for some constant $C$ independent of $\lambda.$

(b) When $\lambda\rightarrow0+$, $\mathcal{C}^{\lambda}$ converges to
$\frac{1}{\psi_{ey}(x)}$ strongly in $L^{2}(\mathcal{S}_{e})$.

(c) When $\lambda\rightarrow+\infty,$ $\mathcal{C}^{\lambda}$ converges to $0$
strongly in $L^{2}(\mathcal{S}_{e})$.
\end{lemma}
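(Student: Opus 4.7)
The plan is to leverage the factorization
\[
\mathcal{C}^{\lambda} = \bigl(1 - \mathcal{E}^{\lambda,+}\bigr)\,\frac{1}{\psi_{ey}(x)}
\]
from (\ref{defn-C-lb}), which expresses $\mathcal{C}^{\lambda}$ entirely in terms of the family studied in Lemma \ref{lemma-e-lb} composed with the multiplication operator $\phi \mapsto \phi/\psi_{ey}$. All three statements then reduce to reading off the corresponding parts of Lemma \ref{lemma-e-lb}, once one books the norm equivalence between $L^2(\mathcal{S}_e)$ and $L_{\psi_{ey}}^2(\mathcal{S}_e)$.

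First I would record the following standing fact: by the two-sided bound $c_0 \leq \psi_{ey} \leq c_1$ in (\ref{bound-phi-e}), multiplication by $1/\psi_{ey}(x)$ is a bounded isomorphism between $L^2(\mathcal{S}_e)$ and $L_{\psi_{ey}}^2(\mathcal{S}_e)$, and the two norms are equivalent with constants depending only on $c_0, c_1$. Strong convergence in one space is therefore equivalent to strong convergence in the other, and any operator bound on $L_{\psi_{ey}}^2$ transfers to an operator bound on $L^2$ at the cost of a $c_0, c_1$-dependent constant.

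For (a), I apply (\ref{estimate-E-lb-one}) to get $\|1 - \mathcal{E}^{\lambda,+}\|_{L_{\psi_{ey}}^2 \to L_{\psi_{ey}}^2} \leq 1$ uniformly in $\lambda > 0$, compose with the bounded multiplication $\phi \mapsto \phi/\psi_{ey}$, and convert back to $L^2$ through norm equivalence. For (b) and (c), I fix $\phi \in L^2(\mathcal{S}_e)$, set $\tilde{\phi} := \phi/\psi_{ey} \in L_{\psi_{ey}}^2(\mathcal{S}_e)$, and invoke parts (b) and (c) of Lemma \ref{lemma-e-lb} applied to $\tilde{\phi}$: part (b) gives $\mathcal{E}^{\lambda,+}\tilde{\phi}\to 0$ as $\lambda\to 0+$, so
\[
\mathcal{C}^{\lambda}\phi = (1 - \mathcal{E}^{\lambda,+})\tilde{\phi} \;\longrightarrow\; \tilde{\phi} = \frac{\phi}{\psi_{ey}(x)}
\]
in $L_{\psi_{ey}}^2$, hence in $L^2$; symmetrically, part (c) gives $\mathcal{E}^{\lambda,+}\tilde{\phi}\to \tilde{\phi}$ as $\lambda\to\infty$, so $\mathcal{C}^{\lambda}\phi \to 0$ in the same two senses.

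There is no real obstacle here: the content of the lemma is already encoded in Lemma \ref{lemma-e-lb}. The two points requiring care are (i) keeping straight that the factorization in (\ref{defn-C-lb}) uses $\mathcal{E}^{\lambda,+}$ and not $\mathcal{E}^{\lambda,-}$, and (ii) systematically using the norm equivalence so that the constants in (a) are uniform in $\lambda$ and the strong limits in (b) and (c) hold in the unweighted $L^2$ norm.
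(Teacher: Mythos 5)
Your proposal is correct and follows exactly the route the paper intends: the paper states Lemma \ref{lemma-C-lb} as an immediate consequence of Lemma \ref{lemma-e-lb} together with the bound (\ref{bound-phi-e}), via the factorization (\ref{defn-C-lb}), which is precisely the argument you spell out. The only thing you add is the explicit bookkeeping of the $L^{2}$ versus $L^{2}_{\psi_{ey}}$ norm equivalence, which the paper leaves implicit.
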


By using the operator $\mathcal{C}^{\lambda}$, the growing mode system
(\ref{eqn-G-phi})-(\ref{eqn-G-bottom}) is reduced to
\begin{equation}
\psi_{n}(x)+\mathcal{C}^{\lambda}P_{ey}\left(  x\right)  \mathcal{C}^{\lambda
}\psi=0,\ \text{on }\mathcal{S}_{e}, \label{eqn-phi-n}%
\end{equation}%
\[
\Delta\psi=0\qquad\text{in}\quad\mathcal{D}_{e},
\]%
\[
\psi(x,-h)=0.
\]
We define the following Dirichlet-Neumann operator $\mathcal{N}_{e}$:
$H^{1}\left(  \mathcal{S}_{e}\right)  \rightarrow L^{2}(\mathcal{S}_{e})$ by
\[
\mathcal{N}_{e}f=\partial_{n}\psi_{f}=\left(  \partial_{y}\psi_{f}-\eta
_{ex}\partial_{x}\psi_{f}\right)  \left(  x,\eta_{e}(x)\right)  ,
\]
where $\psi_{f}$ is the unique solution of the following Dirichlet problem for
$f\in H^{1}\left(  \mathcal{S}_{e}\right)  $
\[
\Delta\psi_{f}=0\qquad\text{in}\quad\mathcal{D}_{e},
\]%
\[
\psi_{f}|_{\mathcal{S}_{e}}=f,
\]%
\[
\psi_{f}(x,-h)=0.
\]
Then the existence of a purely growing mode is reduced to find some
$\lambda>0$ such that the operator $\mathcal{A}_{e}^{\lambda}\ $defined by
\begin{equation}
\mathcal{A}_{e}^{\lambda}=\mathcal{N}_{e}+\mathcal{C}^{\lambda}P_{ey}\left(
x\right)  \mathcal{C}^{\lambda}:H^{1}\left(  \mathcal{S}_{e}\right)
\rightarrow L^{2}(\mathcal{S}_{e}) \label{defn-A-e}%
\end{equation}
has a nontrivial kernel. Note that if we denote by $\phi_{f}$ the holomorphic
conjugate of $\psi_{f}$ in$\ \mathcal{D}_{e}$, then $\partial_{n}\psi
_{f}=\frac{d}{dx}\phi_{f}$. This motivates us to define an analogue of the
Hilbert transformation as in \cite{plot91}, by
\[
\left(  \mathcal{C}_{e}f\right)  \left(  x\right)  =\int_{0}^{x}%
\mathcal{N}_{e}f\ dx.
\]
Then the operator $\mathcal{N}_{e}$ can be written as $\mathcal{N}_{e}%
=\frac{d}{dx}\mathcal{C}_{e}$. From the definition, $\mathcal{C}_{e}f+if$ and
$f-i\mathcal{C}_{e}f$ are the boundary values on $\mathcal{S}_{e}$ of some
analytic functions in $\mathcal{D}_{e}$. Below, we further reduce the operator
$\mathcal{A}_{e}^{\lambda}$ to one defined on the real line. First, we define
the holomorphic mapping $F:\mathcal{D}_{e}\rightarrow\mathbf{R\times}\left(
-ch,0\right)  $ by $F\left(  x,y\right)  =\left(  \phi_{e}\left(  x,y\right)
,\psi_{e}\left(  x,y\right)  \right)  $. We denote
\[
\left(  \xi,\varsigma\right)  =\frac{1}{c}\left(  \phi_{e},\psi_{e}\right)
\in D_{0}=\mathbf{R\times}\left(  -h,0\right)
\]
and define the mapping $G:D_{0}\rightarrow\mathcal{D}_{e}$ by
\[
G\left(  \xi,\varsigma\right)  =\left(  x\left(  \xi,\varsigma\right)
,y\left(  \xi,\varsigma\right)  \right)  =F^{-1}\left(  c\xi,c\varsigma
\right)  .
\]
The flat Dirichlet-Neumann operator $\mathcal{N}$: $H^{1}\left(
\mathbf{R}\right)  \rightarrow L^{2}(\mathbf{R})$ is defined by
\[
\mathcal{N}f=\partial_{\varsigma}\psi_{f}|_{\left\{  \varsigma=0\right\}  },
\]
where $\psi_{f}$ is the solution of the following Dirichlet problem for $f\in
H^{1}\left(  \mathbf{R}\right)  $
\[
\Delta\psi_{f}=0\qquad\text{in}\quad D_{0},
\]%
\[
\psi_{f}|_{\left\{  \varsigma=0\right\}  }=f,
\]%
\[
\psi_{f}|_{\left\{  \varsigma=-h\right\}  }=0.
\]
Similarly, we define the operator $\mathcal{C}$ by $\mathcal{C}f=\int_{0}%
^{x}\mathcal{N}f\ dx$. Then $\mathcal{N}=\frac{d}{d\xi}\mathcal{C}$ and
$\mathcal{C}f+if$ or $f-i\mathcal{C}f$ are the boundary values on $\left\{
\varsigma=0\right\}  $ of analytic functions in $D_{0}$. Moreover,
$\mathcal{N\ }$is a Fourier multiplier operator with the symbol
(\cite{craige88})
\begin{equation}
n\left(  k\right)  =\frac{k}{\tanh\left(  kh\right)  }. \label{symbol-N}%
\end{equation}
To separate the uniform flow $\left(  c,0\right)  ,$ we rewrite
\[
\left(  x\left(  \xi,\varsigma\right)  ,y\left(  \xi,\varsigma\right)
\right)  =\left(  \xi,\varsigma\right)  +\left(  x_{1}\left(  \xi
,\varsigma\right)  ,y_{1}\left(  \xi,\varsigma\right)  \right)  .
\]
Denote
\begin{equation}
w\left(  \xi\right)  =y_{1}\left(  \xi,0\right)  . \label{defn-w}%
\end{equation}
Then we can set $x_{1}\left(  \xi,0\right)  =\mathcal{C}w$ by adding a proper
constant to the vector potential $\phi_{e}$. The mapping $G$ restricted on
$\left\{  \varsigma=0\right\}  $ induces a mapping $B:\mathbf{R\rightarrow
}\mathcal{S}_{e}$ defined by $B\left(  \xi\right)  =\left(  \xi+\mathcal{C}%
w,w\right)  $. Denote $z=x+iy$ and $p=\xi+i\varsigma$, then
\[
u_{e}+c-iv_{e}=\frac{d\left(  \phi_{e}+i\psi_{e}\right)  }{dz}=c\frac{dp}%
{dz}=c\frac{1}{\frac{dz}{dp}}=\frac{c}{1+\partial_{\xi}x_{1}+i\partial_{\xi
}y_{1}}.
\]
So on $\left\{  \varsigma=0\right\}  $, we get
\begin{equation}
u_{e}+c=c\frac{1+\mathcal{N}w}{\left\vert W\right\vert ^{2}},\ v_{e}%
=c\frac{w^{\prime}}{\left\vert W\right\vert ^{2}} \label{velocity-w}%
\end{equation}
where
\begin{equation}
W=\left(  1+\partial_{\xi}x_{1}+i\partial_{\xi}y_{1}\right)  |_{\left\{
\varsigma=0\right\}  }=1+\mathcal{C}w^{\prime}+iw^{\prime}=1+\mathcal{N}%
w+iw^{\prime} \label{defn-W}%
\end{equation}
and $^{\prime}$ denotes the $\xi-$derivative. From (\ref{velocity-w}),
\[
1+\mathcal{N}w=\frac{\left(  u_{e}+c\right)  c}{\left(  u_{e}+c\right)
^{2}+v_{e}^{2}},
\]
and thus by (\ref{bound-phi-e}), there exists $c_{2},c_{3}>0$ such that
\begin{equation}
c_{2}<1+\mathcal{N}w<c_{3}. \label{bound-cw}%
\end{equation}
We define the operator $\mathcal{B}:L^{2}(\mathcal{S}_{e})\rightarrow
L^{2}\left(  \mathbf{R}\right)  $ by
\[
\left(  \mathcal{B}f\right)  \left(  \xi\right)  =f\left(  B\left(
\xi\right)  \right)  =f\left(  \xi+\mathcal{C}w\left(  \xi\right)  ,w\left(
\xi\right)  \right)  \text{, for any }f\in L^{2}(\mathcal{S}_{e}).
\]
Since $\mathcal{BC}_{e}=\mathcal{CB}$ and $\frac{d}{d\xi}\mathcal{B=}\left(
1+\mathcal{N}w\right)  \mathcal{B}\frac{d}{dx},\,$we have
\[
\mathcal{BN}_{e}\mathcal{B}^{-1}=\mathcal{B}\frac{d}{dx}\mathcal{C}%
_{e}\mathcal{B}^{-1}=\frac{1}{1+\mathcal{N}w}\frac{d}{d\xi}\mathcal{C=}%
\frac{1}{\left(  1+\mathcal{N}w\right)  }\mathcal{N},
\]
and%
\begin{align*}
\mathcal{BA}_{e}^{\lambda}\mathcal{B}^{-1}  &  =\mathcal{BN}_{e}%
\mathcal{B}^{-1}+\left(  \mathcal{BC}^{\lambda}\mathcal{B}^{-1}\right)
\mathcal{B}P_{ey}\mathcal{B}^{-1}\left(  \mathcal{BC}^{\lambda}\mathcal{B}%
^{-1}\right) \\
&  =\frac{1}{1+\mathcal{N}w}\mathcal{N}+\mathcal{\tilde{C}}^{\lambda}%
P_{ey}\left(  \xi\right)  \mathcal{\tilde{C}}^{\lambda}.
\end{align*}
Here,
\[
\mathcal{\tilde{C}}^{\lambda}=\mathcal{BC}^{\lambda}\mathcal{B}^{-1}=\left(
\lambda+\frac{1}{1+\mathcal{N}w}\frac{d}{d\xi}\left(  \psi_{ey}(\xi
)\cdot\right)  \right)  ^{-1}\left(  \frac{1}{1+\mathcal{N}w}\frac{d}{d\xi
}\right)
\]
and we use $\psi_{ey}(\xi),P_{ey}\left(  \xi\right)  $ to denote $\psi
_{ey}\left(  B\left(  \xi\right)  \right)  ,$ $P_{ey}\left(  B\left(
\xi\right)  \right)  \ $etc. For $\lambda>0,\ $we define the operator
$\mathcal{A}^{\lambda}:H^{1}\left(  \mathbf{R}\right)  \rightarrow
L^{2}(\mathbf{R})$ by%
\[
\mathcal{A}^{\lambda}=\mathcal{N}+\left(  1+\mathcal{N}w\right)
\mathcal{\tilde{C}}^{\lambda}P_{ey}\left(  \xi\right)  \mathcal{\tilde{C}%
}^{\lambda}.
\]
Then the existence of a purely growing mode is equivalent to find some
$\lambda>0$ such that the operator $\mathcal{A}^{\lambda}$ has a nontrivial kernel.

\section{Properties of the operator $\mathcal{A}^{\lambda}$}

In this section, we study the spectral properties of the operator
$\mathcal{A}^{\lambda}$. First, we have the following estimate for the
Dirichlet-Neumann operator $\mathcal{N}.$

\begin{lemma}
\label{lemma-operator-N}There exists $C_{0}>0$, such that for any $\delta
\in\left(  0,1\right)  \ $and $\ f\in H^{\frac{1}{2}}\left(  \mathbf{R}%
\right)  ,$ we have
\[
\left(  \mathcal{N}f,f\right)  \geq\left(  1-\delta\right)  \frac{1}%
{h}\left\Vert f\right\Vert _{L^{2}}^{2}+C_{0}\delta\left\Vert f\right\Vert
_{H^{\frac{1}{2}}}^{2},\ \
\]
where%
\[
\left\Vert f\right\Vert _{H^{\frac{1}{2}}}^{2}=\int\left(  1+\left\vert
k\right\vert \right)  \left\vert \hat{f}\left(  k\right)  \right\vert ^{2}dk
\]
and $\hat{f}\left(  k\right)  $ is the Fourier transformation of $f$.
\end{lemma}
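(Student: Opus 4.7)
The plan is to reduce the estimate to a pointwise lower bound on the Fourier symbol. By Plancherel and the given formula $n(k)=k/\tanh(kh)$ for the symbol of $\mathcal{N}$,
\[
(\mathcal{N}f,f)=\int_{\mathbb{R}} n(k)|\hat f(k)|^{2}\,dk,
\]
while $\|f\|_{L^{2}}^{2}$ and $\|f\|_{H^{1/2}}^{2}$ are the integrals of $|\hat f|^{2}$ against the weights $1$ and $1+|k|$. Thus it is enough to produce a constant $C_{0}>0$, independent of $\delta$ and $k$, such that
\[
n(k)\;\geq\;(1-\delta)/h+C_{0}\delta(1+|k|)\qquad\text{for all }\delta\in(0,1),\ k\in\mathbb{R}.
\]

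To establish this bound I would first collect two elementary pointwise estimates on the symbol. The classical inequalities $\tanh(x)\leq x$ for $x\geq 0$ and $|\tanh(x)|<1$ yield, respectively,
\[
n(k)\geq 1/h\qquad\text{and}\qquad n(k)\geq |k|,
\]
the first being sharp as $k\to 0$ and the second as $|k|\to\infty$. Taking any convex combination gives $n(k)\geq \theta/h+(1-\theta)|k|$ for every $\theta\in[0,1]$. Choosing $\theta=1-\delta/2$,
\[
n(k)\geq (1-\tfrac{\delta}{2})\tfrac{1}{h}+\tfrac{\delta}{2}|k|=(1-\delta)/h+\tfrac{\delta}{2h}+\tfrac{\delta}{2}|k|.
\]
Setting $C_{0}=\min\{1/(2h),\,1/2\}$, the residual $\tfrac{\delta}{2h}+\tfrac{\delta}{2}|k|-C_{0}\delta(1+|k|)$ is nonnegative for every $k$, which delivers the desired pointwise bound on $n(k)$. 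Integrating against $|\hat f(k)|^{2}$ then gives the lemma.

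There is no genuine obstacle here — the argument is purely pointwise in Fourier. The only small piece of craft is the weight $\theta=\delta/2$, which spends half of the slack in each of the two crude symbol bounds so that a single constant $C_{0}$ can absorb the constant and $|k|$ pieces on the right-hand side simultaneously. The resulting $C_{0}$ depends only on the fixed depth $h$, as required.
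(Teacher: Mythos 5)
Your argument is correct, and it is somewhat cleaner than the paper's.  Both proofs reduce the claim to a pointwise lower bound on the symbol $n(k)=k/\tanh(kh)$ via Plancherel.  The paper observes that $n(k)-1/h\ge 0$ everywhere and that $n(k)-1/h\geq |k|/2$ once $|k|>K$ for some $K$, then splits the frequency integral at $K$ and absorbs the low-frequency part into the $L^{2}$ term, ending up with $C_{0}=\min\{1/(2h),1/(2Kh),1/2\}$.  You instead notice the \emph{global} inequality $n(k)\geq |k|$ (from $\tanh<1$) alongside $n(k)\geq 1/h$ (from $\tanh(x)\le x$), and take the convex combination with weight $\theta=1-\delta/2$; this avoids introducing any cutoff $K$ and yields the fully explicit constant $C_{0}=\min\{1/(2h),1/2\}$.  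The two estimates establish the same lemma; your version just wastes less of the symbol's coercivity at high frequency and has a tidier constant.  One harmless typo in your commentary: you say ``the weight $\theta=\delta/2$'' but mean $\theta=1-\delta/2$, as in your display.
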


\begin{proof}
By the definition (\ref{symbol-N}),
\[
\left(  \mathcal{N}f,f\right)  =\int\frac{k}{\tanh\left(  kh\right)
}\left\vert \hat{f}\left(  k\right)  \right\vert ^{2}dk=\int\frac{\left\vert
k\right\vert }{\tanh\left(  \left\vert k\right\vert h\right)  }\left\vert
\hat{f}\left(  k\right)  \right\vert ^{2}dk.
\]
It is easy to check that the function
\[
h\left(  x\right)  =\frac{x}{\tanh\left(  xh\right)  },\ \ x\geq0
\]
satisfies
\[
h\left(  x\right)  -\frac{1}{h}\geq0\ \text{and }\lim_{x\rightarrow\infty
}\frac{h\left(  x\right)  -\frac{1}{h}}{x}=1.
\]
So there exists $K>0$, such that $h\left(  x\right)  -\frac{1}{h}\geq\frac
{1}{2}x$, when $x>K$. Thus
\begin{align*}
\left(  \mathcal{N}f,f\right)   &  \geq\frac{1}{h}\int\left\vert \hat
{f}\left(  k\right)  \right\vert ^{2}dk+\frac{1}{2}\int_{\left\vert
k\right\vert \geq K}\left\vert k\right\vert \left\vert \hat{f}\left(
k\right)  \right\vert ^{2}dk\\
&  \geq\left(  1-\delta\right)  \frac{1}{h}\left\Vert f\right\Vert _{L^{2}%
}^{2}+\frac{\delta}{2h}\left\Vert f\right\Vert _{L^{2}}^{2}+\frac{\delta}%
{2h}\int_{\left\vert k\right\vert \leq K}\left\vert \hat{f}\left(  k\right)
\right\vert ^{2}dk+\frac{1}{2}\int_{\left\vert k\right\vert \geq K}\left\vert
k\right\vert \left\vert \hat{f}\left(  k\right)  \right\vert ^{2}dk\\
&  \geq\left(  1-\delta\right)  \frac{1}{h}\left\Vert f\right\Vert _{L^{2}%
}^{2}+\min\left\{  \frac{\delta}{2h},\frac{\delta}{2Kh},\frac{1}{2}\right\}
\int\left(  1+\left\vert k\right\vert \right)  \left\vert \hat{f}\left(
k\right)  \right\vert ^{2}dk
\end{align*}
This proves the Lemma with $C_{0}=\min\left\{  \frac{\delta}{2h},\frac{\delta
}{2Kh},\frac{1}{2}\right\}  $.
\end{proof}

We have the following properties for the operator $\mathcal{\tilde{C}%
}^{\lambda}$.

\begin{lemma}
\label{lemma-C-tilta}For $\lambda>0$, the operator $\mathcal{\tilde{C}%
}^{\lambda}:L^{2}(\mathbf{R})\rightarrow L^{2}(\mathbf{R})$ defined by
(\ref{defn-c-lb-direct}) satisfies:

(a)
\[
\left\Vert \mathcal{\tilde{C}}^{\lambda}\right\Vert _{L^{2}(\mathbf{R}%
)\rightarrow L^{2}(\mathbf{R})}\leq C,
\]
for some constant $C$ independent of $\lambda.$

(b) When $\lambda\rightarrow0+$, $\mathcal{\tilde{C}}^{\lambda}$ converges to
$\frac{1}{\psi_{ey}(\xi)}$ strongly in $L^{2}(\mathbf{R})$.

(c) When $\lambda\rightarrow+\infty,$ $\mathcal{\tilde{C}}^{\lambda}$
converges to $0$ strongly in $L^{2}(\mathbf{R})$.
\end{lemma}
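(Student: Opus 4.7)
The plan is to deduce this lemma from Lemma \ref{lemma-C-lb} via the explicit conjugation $\mathcal{\tilde{C}}^{\lambda} = \mathcal{B}\mathcal{C}^{\lambda}\mathcal{B}^{-1}$ that was already established in the derivation of $\mathcal{A}^{\lambda}$. Since this reduces the statement to the analogous properties of $\mathcal{C}^{\lambda}$ on the wave profile $\mathcal{S}_{e}$, the only substantive task is to check that $\mathcal{B}:L^{2}(\mathcal{S}_{e})\to L^{2}(\mathbf{R})$ is a bounded isomorphism with bounded inverse, so that operator norms and strong limits pass through the conjugation.

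First I would verify this change-of-variables bound. Parameterizing $\mathcal{S}_{e}$ by $x$ and using $B(\xi) = (\xi + \mathcal{C}w(\xi), w(\xi))$, one has $x = \xi + \mathcal{C}w(\xi)$ and hence $dx = (1 + \mathcal{N}w)\, d\xi$ since $\mathcal{N} = \tfrac{d}{d\xi}\mathcal{C}$. Therefore
\[
\|\mathcal{B}f\|_{L^{2}(\mathbf{R})}^{2} = \int_{\mathbf{R}} |f(B(\xi))|^{2}\, d\xi = \int_{\mathbf{R}} |f(x,\eta_{e}(x))|^{2}\, \frac{dx}{1+\mathcal{N}w}.
\]
The two-sided bound (\ref{bound-cw}), $c_{2} \leq 1 + \mathcal{N}w \leq c_{3}$, then yields norm equivalence in both directions, so that $\mathcal{B}$ and $\mathcal{B}^{-1}$ are bounded with norms controlled by $c_{2}, c_{3}$.

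With this in hand, part (a) is immediate: $\|\mathcal{\tilde{C}}^{\lambda}\|_{L^{2}(\mathbf{R})\to L^{2}(\mathbf{R})} \leq \|\mathcal{B}\|\,\|\mathcal{B}^{-1}\|\,\|\mathcal{C}^{\lambda}\|_{L^{2}(\mathcal{S}_{e})\to L^{2}(\mathcal{S}_{e})}$, which is bounded uniformly in $\lambda$ by Lemma \ref{lemma-C-lb}(a). For (b), given any $f \in L^{2}(\mathbf{R})$ I would set $g = \mathcal{B}^{-1} f \in L^{2}(\mathcal{S}_{e})$, invoke Lemma \ref{lemma-C-lb}(b) to get $\mathcal{C}^{\lambda} g \to \tfrac{1}{\psi_{ey}(x)} g$ strongly in $L^{2}(\mathcal{S}_{e})$, and then apply the bounded operator $\mathcal{B}$ to conclude
\[
\mathcal{\tilde{C}}^{\lambda} f \to \mathcal{B}\Bigl(\tfrac{1}{\psi_{ey}(x)}\, g\Bigr) = \tfrac{1}{\psi_{ey}(B(\xi))}\, f(\xi) = \tfrac{1}{\psi_{ey}(\xi)}\, f
\]
in $L^{2}(\mathbf{R})$, using the convention $\psi_{ey}(\xi) = \psi_{ey}(B(\xi))$ adopted in Section~2. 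Part (c) follows in exactly the same way with the limit $0$ in place of $\tfrac{1}{\psi_{ey}(\xi)}$.

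I do not anticipate any real obstacle: the only point demanding a little care is the Jacobian identity $dx = (1 + \mathcal{N}w)\, d\xi$ on the profile together with the two-sided bound (\ref{bound-cw}), both of which were essentially recorded during the construction of $\mathcal{A}^{\lambda}$. Everything else is a routine transfer of Lemma \ref{lemma-C-lb} through the bounded isomorphism $\mathcal{B}$.
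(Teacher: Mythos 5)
Your proposal is correct and follows essentially the same route as the paper: the paper's proof is exactly the two-line observation that $\mathcal{B}$ and $\mathcal{B}^{-1}$ are bounded by (\ref{bound-cw}), and that $\mathcal{\tilde{C}}^{\lambda}=\mathcal{BC}^{\lambda}\mathcal{B}^{-1}$ lets the conclusions transfer from Lemma \ref{lemma-C-lb}. You have merely filled in the Jacobian computation $dx=(1+\mathcal{N}w)\,d\xi$ and spelled out the transfer of the strong limits, which the paper leaves implicit.
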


\begin{proof}
By (\ref{bound-cw}), the operator $\mathcal{B}$ and $\mathcal{B}^{-1}$ are
bounded. Since $\mathcal{\tilde{C}}^{\lambda}=\mathcal{BC}^{\lambda
}\mathcal{B}^{-1}$, the above lemma follows directly from Lemma
\ref{lemma-C-lb}.
\end{proof}

To simply notations, we denote $b\left(  \xi\right)  =1+\mathcal{N}w$ and
define the operators
\[
\mathcal{\tilde{D}}=\frac{1}{b\left(  \xi\right)  }\frac{d}{d\xi}\left(
\psi_{ey}(\xi)\cdot\right)  \text{ \ and }\ \mathcal{\tilde{E}}^{\lambda,\pm
}\phi(x)=\frac{\mathcal{\lambda}}{\lambda\pm\mathcal{\tilde{D}}}.
\]
The operator $\mathcal{\tilde{D}}$ is anti-symmetric in the $b\psi_{ey}%
-$weighted space $L_{b\psi_{ey}}^{2}\left(  \mathbf{R}\right)  $. Similar to
the proof of Lemma \ref{lemma-e-lb}, we have

\begin{lemma}
\label{lemma-property-E-t-lb}(a) For any $\lambda>0,$\textrm{ }%
\begin{equation}
\mathrm{\left\Vert \mathcal{\tilde{E}}^{\lambda,\pm}\right\Vert _{L_{b\psi
_{ey}}^{2}\rightarrow L_{b\psi_{ey}}^{2}}\leq1,\ \ } \label{E-t-lb-leq-1}%
\end{equation}
and
\begin{equation}
\left\Vert 1-\mathcal{\tilde{E}}^{\lambda,\pm}\right\Vert _{L_{b\psi_{ey}}%
^{2}\rightarrow L_{b\psi_{ey}}^{2}}\leq1. \label{E-t-lb-one-leq-1}%
\end{equation}

\textrm{{(b)} When }$\lambda\rightarrow0+$, $\mathcal{\tilde{E}}^{\lambda,\pm
}$ converges to $0$ strongly in $L_{b\psi_{ey}}^{2}$.

(c) When $\lambda\rightarrow+\infty,$ $\mathcal{\tilde{E}}^{\lambda,\pm}$
converges to $1$ strongly in $L_{b\psi_{ey}}^{2}$.
\end{lemma}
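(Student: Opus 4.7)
The plan is to imitate the spectral-measure argument used in Lemma \ref{lemma-e-lb}, working in the weighted Hilbert space $L^{2}_{b\psi_{ey}}(\mathbf{R})$ on which $\mathcal{\tilde{D}}$ is anti-symmetric. Concretely, I would first check that $\mathcal{\tilde{D}}$, defined on its natural domain in $L^{2}_{b\psi_{ey}}$, is in fact skew-adjoint (not merely anti-symmetric). Anti-symmetry is immediate from integration by parts using the identity $(b\psi_{ey})\,\mathcal{\tilde{D}}\phi = \tfrac{d}{d\xi}(\psi_{ey}\phi)$ and the bounds $b,\psi_{ey}\in[c_{0}',c_{1}']\subset(0,\infty)$ coming from (\ref{bound-phi-e}) and (\ref{bound-cw}). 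Skew-adjointness then follows from the standard fact that $\tfrac{d}{d\xi}$ on $\mathbf{R}$ with its maximal $L^{2}$ domain is skew-adjoint, together with the bounded similarity $\phi\mapsto \psi_{ey}\phi$ which conjugates $\mathcal{\tilde{D}}$ to $\frac{1}{b\psi_{ey}}\tfrac{d}{d\xi}$ on an equivalent weighted space. Consequently $\mathcal{\tilde{R}} := -i\mathcal{\tilde{D}}$ is self-adjoint on $L^{2}_{b\psi_{ey}}$ and admits a spectral resolution $\{\tilde{M}_{\alpha}\}_{\alpha\in\mathbf{R}}$.

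Once the functional calculus is available, part (a) is immediate: for any $\phi\in L^{2}_{b\psi_{ey}}$,
\[
\bigl\|\mathcal{\tilde{E}}^{\lambda,\pm}\phi\bigr\|_{L^{2}_{b\psi_{ey}}}^{2}
= \int_{\mathbf{R}}\Bigl|\tfrac{\lambda}{\lambda\pm i\alpha}\Bigr|^{2}\,d\bigl\|\tilde{M}_{\alpha}\phi\bigr\|^{2}
\leq \int_{\mathbf{R}} d\bigl\|\tilde{M}_{\alpha}\phi\bigr\|^{2} = \|\phi\|_{L^{2}_{b\psi_{ey}}}^{2},
\]
since $|\lambda/(\lambda\pm i\alpha)|\leq 1$; the same bound with the function $|1-\lambda/(\lambda\pm i\alpha)| = |\alpha|/|\lambda\pm i\alpha|\leq 1$ gives (\ref{E-t-lb-one-leq-1}).

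For parts (b) and (c), I would apply the dominated convergence theorem to the spectral integral exactly as in the proof of Lemma \ref{lemma-e-lb}. Pointwise, $\lambda/(\lambda\pm i\alpha)\to \chi_{\{0\}}(\alpha)$ as $\lambda\to 0+$ and $\to 1$ as $\lambda\to+\infty$, with dominating function $1$. Hence $\mathcal{\tilde{E}}^{\lambda,\pm}\phi$ converges strongly to $\tilde{M}_{\{0\}}\phi$ and to $\phi$, respectively. The one nontrivial point, which is the main step to verify, is that $\tilde{M}_{\{0\}}=0$, i.e.\ that $\ker \mathcal{\tilde{D}}=\{0\}$ in $L^{2}_{b\psi_{ey}}(\mathbf{R})$. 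If $\mathcal{\tilde{D}}\phi=0$, then $\tfrac{d}{d\xi}(\psi_{ey}(\xi)\phi(\xi))=0$, so $\psi_{ey}(\xi)\phi(\xi)$ is a constant. Because $\psi_{ey}(\xi)\to c>0$ as $|\xi|\to\infty$ (property (P1) applied through the hodograph mapping), the only way $\phi$ can lie in $L^{2}_{b\psi_{ey}}$ is for this constant to vanish, whence $\phi\equiv 0$. This gives the triviality of the kernel and completes (b); (c) is automatic from dominated convergence. Finally, since $L^{2}_{b\psi_{ey}}$ and $L^{2}(\mathbf{R})$ have equivalent norms by (\ref{bound-phi-e}) and (\ref{bound-cw}), all the convergence and boundedness statements transfer to $L^{2}(\mathbf{R})$ if desired.
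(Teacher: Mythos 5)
Your proof follows essentially the same spectral-measure argument that the paper signals (it simply says "Similar to the proof of Lemma~\ref{lemma-e-lb}"), and it is correct: the bounds $|\lambda/(\lambda\pm i\alpha)|\le1$ and $|\alpha|/|\lambda\pm i\alpha|\le 1$ give (a), dominated convergence gives (b)--(c), and your verification that $\ker\mathcal{\tilde{D}}=\{0\}$ (because $\psi_{ey}\phi=\mathrm{const}$ forces $\phi\to\mathrm{const}/c\neq 0$ at infinity unless the constant vanishes) fills in a step the paper leaves implicit. One small slip: the bounded similarity $V:\phi\mapsto\psi_{ey}\phi$ conjugates $\mathcal{\tilde{D}}$ to $\tfrac{\psi_{ey}}{b}\tfrac{d}{d\xi}$, not to $\tfrac{1}{b\psi_{ey}}\tfrac{d}{d\xi}$; this does not affect the conclusion, since $\tfrac{\psi_{ey}}{b}\tfrac{d}{d\xi}$ is still a bounded perturbation-by-conjugation of a skew-adjoint derivative. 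You could also bypass the re-derivation entirely: $\mathcal{\tilde{D}}=\mathcal{B}\mathcal{D}\mathcal{B}^{-1}$ and $\mathcal{B}$ is an isometry from $L^{2}_{\psi_{ey}}(\mathcal{S}_{e})$ onto $L^{2}_{b\psi_{ey}}(\mathbf{R})$ (since $dx=b\,d\xi$), so $\mathcal{\tilde{E}}^{\lambda,\pm}=\mathcal{B}\mathcal{E}^{\lambda,\pm}\mathcal{B}^{-1}$ and all three claims transfer directly from Lemma~\ref{lemma-e-lb}.
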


The operator $\mathcal{\tilde{C}}^{\lambda}$ can be written as
\[
\mathcal{\tilde{C}}^{\lambda}=\left(  1-\frac{\mathcal{\lambda}}%
{\lambda+\mathcal{\tilde{D}}}\right)  \frac{1}{\psi_{ey}(\xi)}=\left(
1-\mathcal{\tilde{E}}^{\lambda,+}\right)  \frac{1}{\psi_{ey}(\xi)}.
\]

\begin{proposition}
\label{essential-spec}For any $\lambda>0$, we have
\begin{equation}
\sigma_{\text{ess}}\left(  \mathcal{A}^{\lambda}\right)  \subset\left\{
z\ |\ \operatorname{Re}\lambda\geq\frac{1}{2}\left(  \frac{1}{h}-\frac
{g}{c^{2}}\right)  \right\}  . \label{bound-essential}%
\end{equation}

\end{proposition}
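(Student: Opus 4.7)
The strategy is a Weyl-type argument: use the exponential decay of the solitary wave (property (P1)) to reduce $\mathcal{A}^{\lambda}$ to a constant-coefficient ``asymptotic'' Fourier multiplier $\mathcal{A}^{\lambda}_{\infty}$ that shares its essential spectrum, and then bound $\operatorname{Re}$ of the resulting symbol from below. Under the hodograph transformation, exponential decay of $\eta_e$ transfers to exponential decay of $w(\xi)$, $\mathcal{N}w(\xi)$, and, via the Bernoulli relation for the steady state, of $\psi_{ey}(\xi)-c$ and $P_{ey}(\xi)+g$. Freezing all coefficients at their values at $|\xi|=\infty$ produces
\[
\mathcal{A}^{\lambda}_{\infty}=\mathcal{N}-g\!\left(\frac{\partial_{\xi}}{\lambda+c\,\partial_{\xi}}\right)^{\!2},\qquad a_{\infty}(k)=\frac{k}{\tanh(kh)}+\frac{g k^{2}}{(\lambda+ick)^{2}}.
\]

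The first step is to show that $\mathcal{R}^{\lambda}:=\mathcal{A}^{\lambda}-\mathcal{A}^{\lambda}_{\infty}$, viewed as $H^{1}(\mathbb{R})\to L^{2}(\mathbb{R})$, is compact, so that Weyl's theorem on stability of essential spectrum under relatively compact perturbations yields $\sigma_{\mathrm{ess}}(\mathcal{A}^{\lambda})=\sigma_{\mathrm{ess}}(\mathcal{A}^{\lambda}_{\infty})$. One decomposes $\mathcal{R}^{\lambda}$ into finitely many terms of the schematic form $\mathcal{T}_{1}\,m(\xi)\,\mathcal{T}_{2}$, where $m(\xi)$ is an exponentially decaying multiplier (recording the deviation of a coefficient from its asymptotic value) and $\mathcal{T}_{1},\mathcal{T}_{2}$ are among $\mathcal{N}$, $\tilde{\mathcal{C}}^{\lambda}$, $\tilde{\mathcal{C}}^{\lambda}_{\infty}$, and the identity, all uniformly $L^{2}$-bounded by Lemmas \ref{lemma-C-tilta} and \ref{lemma-property-E-t-lb}. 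I would handle each piece by splitting $m=m\chi_{[-R,R]}+m(1-\chi_{[-R,R]})$: the compactly supported part is compact via Rellich ($H^{1}_{\mathrm{loc}}\hookrightarrow L^{2}_{\mathrm{loc}}$), and the tail is made arbitrarily small by exponential decay.

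Since $\mathcal{A}^{\lambda}_{\infty}$ is a Fourier multiplier with continuous symbol of polynomial growth, its spectrum on $L^{2}$ (with domain $H^{1}$) is $\overline{\{a_{\infty}(k):k\in\mathbb{R}\}}$. Direct computation gives
\[
\operatorname{Re}a_{\infty}(k)=\frac{k}{\tanh(kh)}+\frac{g}{c^{2}}\cdot\frac{t(\lambda^{2}-t)}{(\lambda^{2}+t)^{2}},\qquad t:=c^{2}k^{2}\ge 0.
\]
The first summand is $\ge 1/h$ (the elementary inequality behind Lemma \ref{lemma-operator-N}). For the rational function in $t$, a single-variable calculus check shows $\inf_{t\ge 0}\,t(\lambda^{2}-t)/(\lambda^{2}+t)^{2}=-1$, attained only as $t\to\infty$. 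Therefore $\operatorname{Re}a_{\infty}(k)\ge 1/h-g/c^{2}$, which by the supercriticality $F>1$ (property (P2)) is strictly positive, and a fortiori $\ge\tfrac{1}{2}(1/h-g/c^{2})$, giving the claimed inclusion.

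The principal technical obstacle is the compactness of $\mathcal{R}^{\lambda}$ in the first step. Because $\tilde{\mathcal{C}}^{\lambda}$ is nonlocal, the naive slogan ``decaying multiplier times bounded operator is compact'' fails, and the argument must also control commutators of $\tilde{\mathcal{C}}^{\lambda}$ (equivalently, of the resolvents $\tilde{\mathcal{E}}^{\lambda,\pm}$) with multiplication by exponentially decaying functions. The exponential (rather than merely polynomial) decay of the solitary wave combined with the uniform $L^{2}$ bounds of Lemmas \ref{lemma-C-tilta} and \ref{lemma-property-E-t-lb} is what makes the cutoff-plus-Rellich argument close with quantitative tail estimates.
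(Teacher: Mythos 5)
Your approach is a genuinely different and, in principle, valid route. The paper does not introduce an explicit asymptotic operator; instead it works with the abstract Zhislin and Weyl spectra from Hislop--Sigal, proving a quadratic-form lower bound for functions supported near $|\xi|=\infty$ (Lemma \ref{lemma-quadrtic-bound-lb}) and a commutator estimate $\|[\mathcal{A}^{\lambda},\chi_d]u\|_2\le \varepsilon(d)(\|\mathcal{A}^\lambda u\|_2+\|u\|_2)$ (Lemma \ref{lemma-commu-d}) to identify $W(\mathcal{A}^\lambda)=Z(\mathcal{A}^\lambda)$. Your approach instead freezes coefficients at infinity, appeals to Weyl's invariance theorem, and computes $\overline{a_\infty(\mathbb{R})}$ directly; a pleasant by-product is that the computation gives $\operatorname{Re}a_\infty(k)>\delta_0=\frac{1}{h}-\frac{g}{c^2}$ strictly, i.e. a bound that is actually twice as strong as the $\frac{1}{2}\delta_0$ stated in the Proposition.

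The one place you underestimate the work is the compactness of $\mathcal{R}^\lambda$. Your claimed schematic decomposition into pieces $\mathcal{T}_1\,m(\xi)\,\mathcal{T}_2$ with $\mathcal{T}_i$ uniformly $L^2$-bounded does not quite hold: expanding $\tilde{\mathcal{C}}^\lambda-\tilde{\mathcal{C}}^\lambda_\infty$ via the second resolvent identity produces the operator $(\lambda+\tilde{\mathcal D})^{-1}\bigl[(\tfrac{\psi_{ey}}{b}-c)\partial_\xi+\tfrac{\psi_{ey}'}{b}\bigr](\lambda+c\partial_\xi)^{-1}$, so one of the ``decaying multipliers'' comes glued to a genuine derivative $\partial_\xi$, and ``decaying multiplier composed with a bounded operator on $L^2$'' is not compact. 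The fix is available but is not a cutoff-plus-Rellich argument at the $L^2$ level: one must use that the resolvent $(\lambda+\tilde{\mathcal D})^{-1}$ gains a full derivative ($H^{s}\to H^{s+1}$ for $s=0,1$, since $\tilde{\mathcal D}$ is first-order with coefficients bounded above and below), so the derivative produced by $\tilde{\mathcal D}-c\partial_\xi$ is absorbed, and only then does the decaying multiplier act on an $H^1$ function and yield compactness into $L^2$. Also, $\mathcal{N}$ should not appear in your list of uniformly $L^2$-bounded factors (it is unbounded), though in fact $\mathcal{N}$ cancels in $\mathcal{R}^\lambda$ so it never enters the decomposition. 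Once these points are addressed the argument closes, and it is arguably a more transparent proof than the paper's, at the cost of needing explicit mapping properties of $(\lambda+\tilde{\mathcal{D}})^{-1}$ that the paper's abstract framework sidesteps.
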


We note that
\begin{equation}
\delta_{0}:=\frac{1}{h}-\frac{g}{c^{2}}>0 \label{defn-d-0}%
\end{equation}
by Property (P2), so the above Proposition shows that the essential spectrum
of $\mathcal{A}^{\lambda}$ lies on the right half plane and is away from the
imaginary axis. To prove Proposition \ref{essential-spec}, we need the
following lemmas.

\begin{lemma}
\label{lemma-estimates-E-t}For any $u\in H^{\frac{1}{2}}\left(  \mathbf{R}%
\right)  $, we have \

(i) For any $\lambda>0$,
\begin{equation}
\left\Vert \mathcal{\tilde{E}}^{\lambda,\pm}u\right\Vert _{H^{\frac{1}{2}}%
}\leq C\left\Vert u\right\Vert _{H^{\frac{1}{2}}},\ \label{estimate-e-t-lb}%
\end{equation}
for some constant $C$ independent of $\lambda$.

Below, let $F\left(  \xi\right)  $ be a fixed bounded function that decays at
infinity. Then

(ii) Given $\lambda>0,\ $for any $\varepsilon>0$, there exists a constant
$C_{\varepsilon}$ such that
\[
\left\Vert F\mathcal{\tilde{E}}^{\lambda,\pm}u\right\Vert _{L^{2}}%
\leq\varepsilon\left\Vert u\right\Vert _{H^{\frac{1}{2}}}+C_{\varepsilon
}\left\Vert \frac{u}{1+\xi^{2}}\right\Vert _{L^{2}}.
\]

(iii) For any $\varepsilon>0$, there exists $\lambda_{\varepsilon}>0$, such
that when $0<\lambda<\lambda_{\varepsilon}$,
\[
\left\Vert F\mathcal{\tilde{E}}^{\lambda,\pm}u\right\Vert _{L^{2}}%
\leq\varepsilon\left\Vert u\right\Vert _{H^{\frac{1}{2}}}\text{.}%
\]

(iv) For any $\varepsilon>0$, there exists $\Lambda_{\varepsilon}>0$, such
that when $\lambda>\Lambda_{\varepsilon},$
\[
\left\Vert F\left(  1-\mathcal{\tilde{E}}^{\lambda,\pm}\right)  u\right\Vert
_{L^{2}}\leq\varepsilon\left\Vert u\right\Vert _{H^{\frac{1}{2}}}\text{.}%
\]

\end{lemma}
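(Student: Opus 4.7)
The plan is to prove (i) via a spectral-calculus argument in the weighted space $L^{2}_{b\psi_{ey}}$, then deduce (ii) from compactness of multiplication by $F$, and obtain (iii)--(iv) as contradiction arguments combining this compactness with the strong convergence of Lemma \ref{lemma-property-E-t-lb}.

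For (i), I would use that $\mathcal{\tilde{D}}$ is anti-symmetric on $L^{2}_{b\psi_{ey}}$: writing $\mathcal{\tilde{D}}=iH$ with $H$ self-adjoint on $L^{2}_{b\psi_{ey}}$, the operator $\mathcal{\tilde{E}}^{\lambda,\pm}=\lambda/(\lambda\pm iH)$ is a spectral function of $H$ with $\sup_{\alpha\in\mathbf{R}}|\lambda/(\lambda\pm i\alpha)|\le 1$. The key observation is that the operator domain $D(H)$ coincides with $H^{1}(\mathbf{R})$ with equivalent norms, because $\mathcal{\tilde{D}}=(\psi_{ey}/b)\partial_{\xi}+(\psi_{ey}'/b)$ with $\psi_{ey}/b$ smooth and bounded above and below by positive constants (using (\ref{bound-phi-e}) and (\ref{bound-cw})) and $\psi_{ey}'/b$ bounded. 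By real interpolation between $L^{2}$ and $D(H)$, the form domain $D(|H|^{1/2})$ then agrees with $H^{1/2}(\mathbf{R})$ up to equivalent norms, and the spectral-calculus bound $\|\mathcal{\tilde{E}}^{\lambda,\pm}\|_{D(|H|^{1/2})\to D(|H|^{1/2})}\le 1$ yields (\ref{estimate-e-t-lb}) with $C$ independent of $\lambda$.

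For (ii), the key point is that pointwise multiplication by $F$ is compact as a map $H^{1/2}(\mathbf{R})\to L^{2}(\mathbf{R})$: bounded sequences in $H^{1/2}$ converge in $L^{2}_{\mathrm{loc}}$ by Rellich, and the decay of $F$ at infinity makes the tails uniformly small. Combined with the uniform $H^{1/2}$-bound from (i), the operator $F\mathcal{\tilde{E}}^{\lambda,\pm}\colon H^{1/2}\to L^{2}$ is compact, so for any $\varepsilon>0$ one finds a finite-rank $T_{\varepsilon}$ with $\|F\mathcal{\tilde{E}}^{\lambda,\pm}-T_{\varepsilon}\|_{H^{1/2}\to L^{2}}<\varepsilon$. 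By density the dual vectors defining $T_{\varepsilon}$ may be chosen as $L^{2}$-functions compactly supported in some $[-R_{\varepsilon},R_{\varepsilon}]$, giving $|\langle u,\phi_{i}\rangle|\le(1+R_{\varepsilon}^{2})\|\phi_{i}\|_{L^{2}}\|u/(1+\xi^{2})\|_{L^{2}}$ and hence the required decomposition. For (iii) I argue by contradiction: if it failed there would exist $\varepsilon>0$, $\lambda_{n}\to 0^{+}$, and $u_{n}$ with $\|u_{n}\|_{H^{1/2}}\le 1$ and $\|F\mathcal{\tilde{E}}^{\lambda_{n},\pm}u_{n}\|_{L^{2}}\ge\varepsilon$; by (i), $w_{n}:=\mathcal{\tilde{E}}^{\lambda_{n},\pm}u_{n}$ is bounded in $H^{1/2}$, so the compactness of multiplication by $F$ yields a subsequence with $Fw_{n}\to g$ in $L^{2}$ and $\|g\|_{L^{2}}\ge\varepsilon$. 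On the other hand, since the $L^{2}_{b\psi_{ey}}$-adjoint of $\mathcal{\tilde{E}}^{\lambda,+}$ is $\mathcal{\tilde{E}}^{\lambda,-}$ by anti-symmetry of $\mathcal{\tilde{D}}$, and both converge strongly to $0$ by Lemma \ref{lemma-property-E-t-lb}(b), for every $\varphi\in C^{\infty}_{c}$ we have $\langle w_{n},\varphi\rangle_{b\psi_{ey}}=\langle u_{n},\mathcal{\tilde{E}}^{\lambda_{n},\mp}\varphi\rangle_{b\psi_{ey}}\to 0$; by density and uniform $L^{2}$-boundedness of $w_{n}$, $w_{n}\rightharpoonup 0$ weakly in $L^{2}$ and hence $g=0$, a contradiction. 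Part (iv) follows by the identical scheme applied to $1-\mathcal{\tilde{E}}^{\lambda,\pm}$, using part (c) of Lemma \ref{lemma-property-E-t-lb}.

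The main obstacle is (i): the naive energy estimate obtained by differentiating $(\lambda+\mathcal{\tilde{D}})v=\lambda u$ and pairing with $v'$ produces a commutator term involving $(\psi_{ey}/b)'$ that can only be absorbed when $\lambda$ exceeds the $L^{\infty}$ norm of this coefficient, so the energy method breaks down as $\lambda\to 0^{+}$ and does not by itself give a uniform $H^{1}$-bound to interpolate against. The spectral-calculus route circumvents this because the multiplier bound $|\lambda/(\lambda\pm i\alpha)|\le 1$ is uniform in both $\alpha$ and $\lambda$; the price paid is the need to identify the form domain of the abstract self-adjoint operator $H$ with the concrete Sobolev space $H^{1/2}(\mathbf{R})$, which is where the ellipticity of $\mathcal{\tilde{D}}$ (i.e., $\psi_{ey}/b$ being bounded below) enters decisively.
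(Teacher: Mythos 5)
Your proposal is correct and follows the paper's approach in all essential respects. For part (i), the identification of the form domain of the self-adjoint generator $\tilde{R}=-i\tilde{\mathcal{D}}$ with $H^{1/2}(\mathbf{R})$ via interpolation between $L^{2}$ and $D(\tilde{R})\cong H^{1}$, combined with the spectral-multiplier bound $|\lambda/(\lambda\pm i\alpha)|\le1$, is precisely the paper's argument (the paper calls this interpolation scale $\tilde{H}^{s}$ and cites general interpolation theory for the norm equivalence). Parts (iii) and (iv) are likewise the paper's argument verbatim: a contradiction argument, $H^{1/2}$-boundedness from (i), the adjoint relation $(\tilde{\mathcal{E}}^{\lambda,+})^{*}=\tilde{\mathcal{E}}^{\lambda,-}$ in $L^{2}_{b\psi_{ey}}$ to show $w_{n}\rightharpoonup 0$ using the strong convergence of Lemma \ref{lemma-property-E-t-lb}, and Rellich plus the decay of $F$ to upgrade weak convergence to $\|Fw_{n}\|_{L^{2}}\to 0$.

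The one place you take a genuinely different route is (ii). The paper argues by contradiction: it normalizes so that $\|F\tilde{\mathcal{E}}^{\lambda,\pm}u_{n}\|_{L^{2}}=1$, deduces $u_{n}\rightharpoonup 0$ from the hypothesis $\|u_{n}/(1+\xi^{2})\|_{L^{2}}\to 0$, and then splits $F=F\chi_{R}+F(1-\chi_{R})$ to reach a contradiction via local compactness. You instead observe that $F\tilde{\mathcal{E}}^{\lambda,\pm}:H^{1/2}\to L^{2}$ is compact (which indeed follows from (i) plus compactness of multiplication by $F$), approximate it in operator norm by a finite-rank operator, and use density to represent the dual vectors by compactly supported $L^{2}$ functions, whence the $\|u/(1+\xi^{2})\|_{L^{2}}$ remainder appears with an explicit constant. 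Both routes exploit the same compactness; yours is constructive and gives an explicit source for $C_{\varepsilon}$, while the paper's is a shorter soft argument. Either is acceptable.
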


\begin{proof}
Proof of (i): Denote $\left\{  \tilde{M}_{\alpha};\alpha\in\mathbf{R}%
^{1}\right\}  $ to be the spectral measure of the self-adjoint operator
$\tilde{R}=-i\mathcal{\tilde{D}}$ on $L_{b\psi_{ey}}^{2}$. For $s\geq0$, we
define the space
\[
\tilde{H}^{s}=\left\{  u\in L_{b\psi_{ey}}^{2}|\ \left\vert \tilde
{R}\right\vert ^{s}u\in L_{b\psi_{ey}}^{2}\right\}
\]
with the norm
\[
\left\Vert u\right\Vert _{\tilde{H}^{s}}=\left\Vert u\right\Vert
_{L_{b\psi_{ey}}^{2}}+\left\Vert \left\vert \tilde{R}\right\vert
^{s}u\right\Vert _{L_{b\psi_{ey}}^{2}}=\left\Vert u\right\Vert _{L_{b\psi
_{ey}}^{2}}+\left(  \int_{\mathbf{R}}\left\vert \alpha\right\vert ^{2s}%
d\Vert\tilde{M}_{\alpha}u\Vert_{L_{b\psi_{ey}}^{2}}^{2}\right)  ^{\frac{1}{2}%
},
\]
where $\left\vert \tilde{R}\right\vert ^{s}$ is the positive self-adjoint
operator defined by $\int\left\vert \alpha\right\vert ^{s}d\tilde{M}_{\alpha}%
$. We claim that the norm $\left\Vert \cdot\right\Vert _{\tilde{H}^{s}}$ is
equivalent to the norm $\left\Vert \cdot\right\Vert _{H^{s}}$, for $0\leq
s\leq1$. When $s=0$, $\tilde{H}^{0}=L_{b\psi_{ey}}^{2}$ and $H^{0}=L^{2}$.
Since $b$ and $\psi_{ey}$ are bounded with positive lower bounds, $\left\Vert
\cdot\right\Vert _{L_{b\psi_{ey}}^{2}}$ and $\left\Vert _{\cdot}\right\Vert
_{L^{2}}$ are equivalent. When $s=1$, we have
\[
\left\Vert u\right\Vert _{\tilde{H}^{1}}=\left\Vert u\right\Vert
_{L_{b\psi_{ey}}^{2}}+\left(  \int\left\vert \frac{1}{b}\frac{d}{d\xi}\left(
\psi_{ey}u\right)  \right\vert ^{2}b\psi_{ey}dx\right)  ^{\frac{1}{2}},
\]
which is clearly equivalent to $\left\Vert u\right\Vert _{H^{1}}^{2}$, again
due to the bounds of $b$ and $\psi_{ey}$. When $0<s<1$, the spaces $\tilde
{H}^{s}$ ($H^{s}$) are the interpolation spaces of $\tilde{H}^{0}$($H^{0}$)
and $\tilde{H}^{1}$ ($H^{1}$)$.$ So by the general interpolation theory
(\cite{bergh}), we get the equivalence of the norms $\left\Vert \cdot
\right\Vert _{\tilde{H}^{s}}$ and $\left\Vert \cdot\right\Vert _{H^{s}}$.
Thus, there exists $C_{1},C_{2}>0$, such that
\begin{equation}
C_{1}\left\Vert u\right\Vert _{\tilde{H}^{\frac{1}{2}}}\leq\left\Vert
u\right\Vert _{H_{2}^{1}}\leq C_{2}\left\Vert u\right\Vert _{\tilde{H}%
^{\frac{1}{2}}}. \label{equivalence-norm}%
\end{equation}
Since $\tilde{R}$ and $\mathcal{\tilde{E}}^{\lambda,\pm}$ are commutable, we
have
\begin{align*}
\left\Vert \mathcal{\tilde{E}}^{\lambda,\pm}u\right\Vert _{\tilde{H}^{\frac
{1}{2}}}  &  =\left\Vert \mathcal{\tilde{E}}^{\lambda,\pm}u\right\Vert
_{L_{b\psi_{ey}}^{2}}+\left\Vert \mathcal{\tilde{E}}^{\lambda,\pm}\left(
\left\vert \tilde{R}\right\vert ^{\frac{1}{2}}u\right)  \right\Vert
_{L_{b\psi_{ey}}^{2}}\\
&  \leq\left\Vert u\right\Vert _{L_{b\psi_{ey}}^{2}}+\left\Vert \left\vert
\tilde{R}\right\vert ^{\frac{1}{2}}u\right\Vert _{L_{b\psi_{ey}}^{2}%
}=\left\Vert u\right\Vert _{\tilde{H}^{\frac{1}{2}}}.
\end{align*}
The estimate (\ref{estimate-e-t-lb}) follows from above and
(\ref{equivalence-norm}).

Proof of (ii): Suppose otherwise, then there exists $\varepsilon_{0}>0$ and a
sequence $\left\{  u_{n}\right\}  \in H^{\frac{1}{2}}\left(  \mathbf{R}%
\right)  $ such that
\[
\left\Vert F\mathcal{\tilde{E}}^{\lambda,\pm}u_{n}\right\Vert _{L^{2}}%
\geq\varepsilon_{0}\left\Vert u_{n}\right\Vert _{H^{\frac{1}{2}}}+n\left\Vert
\frac{u_{n}}{1+\xi^{2}}\right\Vert _{L^{2}}.
\]
We normalize $u_{n}$ by setting $\left\Vert F\mathcal{\tilde{E}}^{\lambda,\pm
}u_{n}\right\Vert _{L^{2}}=1$. Then
\[
\left\Vert u_{n}\right\Vert _{H^{\frac{1}{2}}}\leq\frac{1}{\varepsilon_{0}%
},\ \ \ \left\Vert \frac{u_{n}}{1+\xi^{2}}\right\Vert _{L^{2}}\leq\frac{1}%
{n}.
\]
So there exists $u_{\infty}\in H^{\frac{1}{2}}$, such that $u_{n}\rightarrow
u_{\infty}$ weakly in $H^{\frac{1}{2}}$. Since $\frac{u_{n}}{1+\xi^{2}%
}\rightarrow0$ strongly in $L^{2}$, we have $u_{\infty}=0$. Thus
$v_{n}=\mathcal{\tilde{E}}^{\lambda,\pm}u_{n}$ converges to $0$ weakly in
$L^{2}$. By (i),
\[
\left\Vert v_{n}\right\Vert _{H^{\frac{1}{2}}}\leq C\left\Vert u_{n}%
\right\Vert _{H^{\frac{1}{2}}}\leq\frac{C}{\varepsilon_{0}}.
\]
Let $\chi_{R}\in C_{0}^{\infty}$ \ be a cut-off function for $\left\{
\left\vert \xi\right\vert \leq R\right\}  $. We write%
\[
F=F\chi_{R}+F\left(  1-\chi_{R}\right)  =F_{1}+F_{2}.
\]
Then
\[
\left\Vert F_{2}v_{n}\right\Vert _{L^{2}}\leq C\max_{\left\vert \xi\right\vert
\geq R}\left\vert F\left(  \xi\right)  \right\vert \left\Vert u_{n}\right\Vert
_{L^{2}}\leq C\frac{1}{\varepsilon_{0}}\max_{\left\vert \xi\right\vert \geq
R}\left\vert F\left(  \xi\right)  \right\vert \leq\frac{1}{2},
\]
when $R$ is chosen to be big enough. Since $F_{1}$ has a compact support and
$H^{\frac{1}{2}}\hookrightarrow L^{2}$ is locally compact, so $F_{1}%
v_{n}\rightarrow0$ strongly in $L^{2}$. Thus, when $n$ is large enough,
\[
\left\Vert Fv_{n}\right\Vert _{L^{2}}\leq\left\Vert F_{1}v_{n}\right\Vert
_{L^{2}}+\left\Vert F_{2}v_{n}\right\Vert _{L^{2}}\leq\frac{3}{4}\text{.}%
\]
This is a contradiction to the fact that $\left\Vert Fv_{n}\right\Vert
_{L^{2}}=\left\Vert F\mathcal{\tilde{E}}^{\lambda,\pm}u_{n}\right\Vert
_{L^{2}}=1$.

Proof of (iii): Suppose otherwise, then there exists $\varepsilon_{0}>0$ and a
sequence $\left\{  u_{n}\right\}  \in H^{\frac{1}{2}}\left(  \mathbf{R}%
\right)  ,\ \lambda_{n}\rightarrow0+,$ such that
\[
\left\Vert F\mathcal{\tilde{E}}^{\lambda_{n},\pm}u_{n}\right\Vert _{L^{2}}%
\geq\varepsilon_{0}\left\Vert u_{n}\right\Vert _{H^{\frac{1}{2}}}.
\]
Normalize $u_{n}$ by$\left\Vert F\mathcal{\tilde{E}}^{\lambda_{n},\pm}%
u_{n}\right\Vert _{L^{2}}=1$. Then $\left\Vert u_{n}\right\Vert _{H^{\frac
{1}{2}}}\leq\frac{1}{\varepsilon_{0}}$. Let $u_{n}\rightarrow u_{\infty}$
weakly in $H^{\frac{1}{2}}$. Then for any $v\in L^{2}$, we have
\[
\left(  \mathcal{\tilde{E}}^{\lambda_{n},\pm}u_{n},v\right)  =\left(
u_{n},b\psi_{ey}\mathcal{\tilde{E}}^{\lambda_{n},\mp}\left(  \frac{v}%
{b\psi_{ey}}\right)  \right)  \rightarrow0,
\]
because by Lemma \ref{lemma-property-E-t-lb}, $b\psi_{ey}\mathcal{\tilde{E}%
}^{\lambda_{n},\mp}\left(  \frac{v}{b\psi_{ey}}\right)  \rightarrow0$ strongly
in $L^{2}$ when $\lambda_{n}\rightarrow0+$. So $\mathcal{\tilde{E}}%
^{\lambda_{n},\pm}u_{n}\rightarrow0$ weakly in $L^{2}$, and this leads to a
contradiction as in the proof of (ii).

Proof of (iv) is the same as that of (iii), except that we use the strong
convergence $1-\mathcal{\tilde{E}}^{\lambda_{n},\pm}\rightarrow0\ $when
$\lambda_{n}\rightarrow\infty$.
\end{proof}

\begin{lemma}
\label{lemma-quadrtic-bound-lb}Consider any sequence%
\[
\left\{  u_{n}\right\}  \in H^{\frac{1}{2}}\left(  \mathbf{R}\right)
,\ \ \left\Vert u_{n}\right\Vert _{2}=1,\ \ supp\ u_{n}\subset\left\{
\xi|\ \left\vert \xi\right\vert \geq n\right\}  .
\]
Then for any complex number $z\ $with $\operatorname{Re}z<\frac{1}{2}%
\delta_{0}$, we have
\[
\operatorname{Re}\left(  \left(  \mathcal{A}^{\lambda}-z\right)  u_{n}%
,u_{n}\right)  \geq\frac{1}{4}\delta_{0},
\]
when $n$ is large enough. Here, $\delta_{0}\ $is defined by (\ref{defn-d-0}).
\end{lemma}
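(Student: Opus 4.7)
The plan is to show $\operatorname{Re}(\mathcal{A}^\lambda u_n,u_n)\ge \delta_0-o(1)$; combined with $\operatorname{Re} z<\tfrac{1}{2}\delta_0$ this yields the claim. The key reduction is that by (P1) the coefficients $b$, $\psi_{ey}$, $P_{ey}$ tend exponentially fast to their asymptotic values $1$, $c$, $-g$ as $|\xi|\to\infty$, so on the support of $u_n$ the operator $\mathcal{A}^\lambda$ is close to the constant-coefficient Fourier multiplier
\[
\mathcal{A}_\infty^\lambda u \;:=\; \mathcal{N}u \;-\; g\,(\tilde{\mathcal{C}}_\infty^\lambda)^2 u,
\]
where $\tilde{\mathcal{C}}_\infty^\lambda$ is the Fourier multiplier with symbol $ik/(\lambda+cik)$ obtained from $\tilde{\mathcal{C}}^\lambda$ by freezing $b\equiv 1$, $\psi_{ey}\equiv c$. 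The symbol of $\mathcal{A}_\infty^\lambda$ is $\sigma(k)=|k|/\tanh(|k|h)+gk^2/(\lambda+cik)^2$, and an elementary calculation gives
\[
\operatorname{Re}\sigma(k) \;=\; \frac{|k|}{\tanh(|k|h)} \;+\; gk^2\,\frac{\lambda^2-c^2k^2}{(\lambda^2+c^2k^2)^2} \;\ge\; \frac{1}{h}-\frac{g}{c^2}\;=\;\delta_0
\]
for every $k\in\mathbf{R}$: the first term is $\ge 1/h$, while the second is $\ge -g/c^2$ because, after clearing denominators, the inequality becomes $\lambda^4+3\lambda^2 c^2 k^2\ge 0$. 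Plancherel then gives $\operatorname{Re}(\mathcal{A}_\infty^\lambda u_n,u_n)\ge \delta_0\|u_n\|_{L^2}^2=\delta_0$.

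It remains to show $\operatorname{Re}((\mathcal{A}^\lambda-\mathcal{A}_\infty^\lambda)u_n,u_n)\to 0$. Expanding $b=1+(b-1)$, $P_{ey}=-g+(P_{ey}+g)$ and $\tilde{\mathcal{C}}^\lambda=\tilde{\mathcal{C}}_\infty^\lambda+(\tilde{\mathcal{C}}^\lambda-\tilde{\mathcal{C}}_\infty^\lambda)$ inside $b\,\tilde{\mathcal{C}}^\lambda P_{ey}\tilde{\mathcal{C}}^\lambda$, the principal piece $1\cdot\tilde{\mathcal{C}}_\infty^\lambda\cdot(-g)\cdot\tilde{\mathcal{C}}_\infty^\lambda=-g(\tilde{\mathcal{C}}_\infty^\lambda)^2$ cancels, and each surviving monomial carries at least one of the three decaying factors $b-1$, $P_{ey}+g$, $\tilde{\mathcal{C}}^\lambda-\tilde{\mathcal{C}}_\infty^\lambda$. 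When a decaying multiplication acts outermost, pairing with $u_n$ gives $o(1)$ directly, since $|\int a(\xi)(Xu_n)\bar u_n\,d\xi|\le\sup_{|\xi|\ge n}|a|\cdot\|X\|\|u_n\|_{L^2}^2$. For a sandwiched decaying coefficient I would use the $L^2$-adjoint identity
\[
(\tilde{E}^{\lambda,+})^*f \;=\; b\psi_{ey}\,\tilde{E}^{\lambda,-}\bigl(f/(b\psi_{ey})\bigr),
\]
a consequence of the antisymmetry of $\tilde{D}$ on $L^2_{b\psi_{ey}}$, to move $\tilde{E}^{\lambda,\pm}$ onto $u_n$, reducing the term to the canonical form $\int F(\xi)\,(\tilde{E}^{\lambda,\pm}u_n)\,\bar u_n\,d\xi$ with $F$ bounded and decaying. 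Lemma \ref{lemma-estimates-E-t}(ii) then bounds this by $\varepsilon\|u_n\|_{H^{1/2}}^2+C_\varepsilon(1+n^2)^{-1}$, using $\|u_n/(1+\xi^2)\|_{L^2}\le(1+n^2)^{-1}$ from the support assumption. The nonlocal difference $\tilde{\mathcal{C}}^\lambda-\tilde{\mathcal{C}}_\infty^\lambda$ is decomposed via the resolvent identity $(\lambda+\tilde{D})^{-1}-(\lambda+\tilde{D}_\infty)^{-1}=(\lambda+\tilde{D})^{-1}(\tilde{D}_\infty-\tilde{D})(\lambda+\tilde{D}_\infty)^{-1}$, where $\tilde{D}_\infty=c\partial_\xi$ and the middle factor $\tilde{D}_\infty-\tilde{D}$ again involves only decaying coefficients.

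The $\varepsilon\|u_n\|_{H^{1/2}}^2$ residuals are absorbed by applying Lemma \ref{lemma-operator-N} to get $(\mathcal{N}u_n,u_n)\ge(1-\delta)/h+C_0\delta\|u_n\|_{H^{1/2}}^2$ with some small $\delta$, then choosing $\varepsilon\ll C_0\delta$ and $\delta<h\delta_0/4$; this yields $\operatorname{Re}(\mathcal{A}^\lambda u_n,u_n)\ge \tfrac{3}{4}\delta_0$ for all large $n$, whence $\operatorname{Re}((\mathcal{A}^\lambda-z)u_n,u_n)\ge\tfrac{3}{4}\delta_0-\tfrac{1}{2}\delta_0=\tfrac{1}{4}\delta_0$. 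The main technical obstacle is the bookkeeping of the expansion: one obtains roughly a dozen error monomials, each requiring an adjoint shuffle or resolvent manipulation to be reduced to the canonical form covered by Lemma \ref{lemma-estimates-E-t}(ii). The uniform decay of all coefficient differences, traceable through the hodograph map to (P1), is what makes every such reduction succeed.
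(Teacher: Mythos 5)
Your proposal is correct in substance and rests on a genuinely different organizing idea than the paper's. The paper never introduces the constant-coefficient operator $\mathcal{A}_\infty^\lambda$: it splits $P_{ey}=-g+\psi_{ey}\tilde a$, keeps $\tilde{\mathcal{C}}^\lambda$ intact, and controls the leading term $T_1^1$ by Cauchy--Schwarz in the weighted $L^2_{b\psi_{ey}}$ norm together with $\|1-\tilde{\mathcal{E}}^{\lambda,\pm}\|_{L^2_{b\psi_{ey}}}\le 1$, giving $|T_1^1|\le g/c^2+O(1/n)$. You instead freeze the coefficients at infinity, read off the Fourier symbol $\sigma(k)=|k|/\tanh(|k|h)+gk^2(\lambda^2-c^2k^2)/(\lambda^2+c^2k^2)^2$, and verify by the algebraic inequality $\lambda^4+3\lambda^2c^2k^2\ge 0$ that $\operatorname{Re}\sigma(k)\ge\delta_0$ pointwise. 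That symbol computation is correct and more transparent about \emph{why} the threshold is exactly $\delta_0=\frac1h-\frac{g}{c^2}$; the paper's bound is the same number but obtained as a cruder triangle-inequality estimate. The cost of your route is the extra resolvent-identity step needed to compare $\tilde{\mathcal{C}}^\lambda$ with $\tilde{\mathcal{C}}_\infty^\lambda$: you must note that $\tilde{\mathcal D}-\tilde{\mathcal D}_\infty$ is a first-order operator with decaying coefficients, and that $\tfrac{d}{d\xi}(\lambda+\tilde{\mathcal D})^{-1}$ is $L^2$-bounded via $\tilde{\mathcal D}(\lambda+\tilde{\mathcal D})^{-1}=1-\tilde{\mathcal E}^{\lambda,+}$; this is a layer of bookkeeping the paper's decomposition avoids.

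One point in your write-up needs tightening. You first derive $\operatorname{Re}(\mathcal{A}_\infty^\lambda u_n,u_n)\ge\delta_0\|u_n\|_{L^2}^2$ and announce ``it remains to show $\operatorname{Re}((\mathcal{A}^\lambda-\mathcal{A}_\infty^\lambda)u_n,u_n)\to 0$,'' but that difference is \emph{not} $o(1)$: Lemma~\ref{lemma-estimates-E-t}(ii) leaves residuals of size $\varepsilon\|u_n\|_{H^{1/2}}^2$, and $\|u_n\|_{H^{1/2}}$ is not assumed bounded. You then propose to absorb these by applying Lemma~\ref{lemma-operator-N} to $(\mathcal{N}u_n,u_n)$, but $\mathcal{N}$ is already inside $\mathcal{A}_\infty^\lambda$, so as written you would be using its coercivity twice. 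The repair is small and does not change the mathematics: either split off $\mathcal{N}$ before applying Plancherel, i.e.\ write $\operatorname{Re}(\mathcal{A}^\lambda u_n,u_n)=(\mathcal{N}u_n,u_n)+\operatorname{Re}(-g(\tilde{\mathcal{C}}_\infty^\lambda)^2u_n,u_n)+\operatorname{Re}(\text{error})$, bound the middle term by $-g/c^2$ via the second part of your symbol inequality, and invoke Lemma~\ref{lemma-operator-N} only on the first term; or sharpen the symbol bound itself to $\operatorname{Re}\sigma(k)\ge(\delta_0-\delta/h)+C_0\delta(1+|k|)$ (combine your pointwise bound on the $g$-part with the growth of $|k|/\tanh(|k|h)$) so that the coercive $\|u_n\|_{H^{1/2}}^2$ term survives the Plancherel step. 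With either fix the remainder of your argument closes exactly as you describe.
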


\begin{proof}
We have%
\begin{equation}
\operatorname{Re}\left(  \left(  \mathcal{A}^{\lambda}-z\right)  u_{n}%
,u_{n}\right)  =\left(  \mathcal{N}u_{n},u_{n}\right)  -\operatorname{Re}%
z+\operatorname{Re}\left(  b\mathcal{\tilde{C}}^{\lambda}P_{ey}\left(
\xi\right)  \mathcal{\tilde{C}}^{\lambda}u_{n},u_{n}\right)  .
\label{re-A-lb-qua}%
\end{equation}
For $0<\delta<1$ (to be fixed later), by Lemma \ref{lemma-operator-N}
\begin{equation}
\left(  \mathcal{N}u_{n},u_{n}\right)  \geq\left(  1-\delta\right)  \frac
{1}{h}+C_{0}\delta\left\Vert u_{n}\right\Vert _{H^{\frac{1}{2}}}^{2}.
\label{estimates-N}%
\end{equation}
Note that by (\ref{pressure})%
\begin{align*}
P_{ey}|_{\mathcal{S}_{e}}  &  =-g-\frac{1}{2}\frac{d}{dy}\left(  |\nabla
\psi_{e}|^{2}\right)  =-g-\left(  \psi_{ex}\psi_{exy}+\psi_{ey}\psi
_{eyy}\right) \\
&  =-g+\psi_{ey}\left(  \psi_{exx}+\eta_{ex}\psi_{exy}\right)  =-g+\psi
_{ey}\frac{d}{dx}\left(  \psi_{ex}\right)  .
\end{align*}
Denote
\begin{equation}
P_{ey}\left(  \xi\right)  =-g+\psi_{ey}\left(  \xi\right)  \tilde{a}\left(
\xi\right)  ,\ \ \tilde{a}\left(  \xi\right)  =\frac{d}{dx}\left(  \psi
_{ex}\right)  \left(  \xi\right)  . \label{defn-a}%
\end{equation}
Then $\tilde{a}\left(  \xi\right)  \ $decays exponentially when $\left\vert
\xi\right\vert \rightarrow\infty$. We have
\begin{align*}
&  \ \ \ \ \left(  b\mathcal{\tilde{C}}^{\lambda}P_{ey}\left(  \xi\right)
\mathcal{\tilde{C}}^{\lambda}u_{n},u_{n}\right) \\
&  =-g\left(  b\left(  1-\mathcal{\tilde{E}}^{\lambda,+}\right)  \frac{1}%
{\psi_{ey}}\left(  1-\mathcal{\tilde{E}}^{\lambda,+}\right)  \frac{1}%
{\psi_{ey}}u_{n},u_{n}\right) \\
&  \ \ \ \ \ +\left(  b\left(  1-\mathcal{\tilde{E}}^{\lambda,+}\right)
\tilde{a}\left(  1-\mathcal{\tilde{E}}^{\lambda,+}\right)  \frac{1}{\psi_{ey}%
}u_{n},u_{n}\right) \\
&  =T_{1}+T_{2}.
\end{align*}
Denote
\begin{equation}
\tilde{b}\left(  \xi\right)  =b-1,\ \ \tilde{c}\left(  \xi\right)  =\frac
{1}{\psi_{ey}}-\frac{1}{c}. \label{defn-b,c-tilta}%
\end{equation}
Then $\tilde{b},\tilde{c}$ tends to zero exponentially when $\left\vert
\xi\right\vert \rightarrow\infty$. The first term can be written as
\begin{align*}
T_{1}  &  =-g\left(  b\left(  1-\mathcal{\tilde{E}}^{\lambda,+}\right)
\frac{1}{\psi_{ey}}u_{n},\left(  1-\mathcal{\tilde{E}}^{\lambda,-}\right)
\frac{1}{\psi_{ey}}u_{n}\right) \\
&  =-g\left(  b\psi_{ey}\left(  1-\mathcal{\tilde{E}}^{\lambda,+}\right)
\frac{1}{\psi_{ey^{2}}}u_{n},\left(  1-\mathcal{\tilde{E}}^{\lambda,-}\right)
\frac{1}{\psi_{ey}}u_{n}\right) \\
\ \ \ \ \ \  &  \ \ \ \ \ \ \ \ -g\left(  b\psi_{ey}\left[  \frac{1}{\psi
_{ey}},1-\mathcal{\tilde{E}}^{\lambda,+}\right]  \frac{1}{\psi_{ey}}%
u_{n},\left(  1-\mathcal{\tilde{E}}^{\lambda,-}\right)  \frac{1}{\psi_{ey}%
}u_{n}\right) \\
&  =T_{1}^{1}+T_{1}^{2},
\end{align*}
where in the above we use the fact that the operator $\mathcal{\tilde{D}}$ is
anti-symmetric in the space $L_{b\psi_{ey}}^{2}$. In the rest of this paper,
we use $C$ to denote a generic constant in the estimates. By Lemma
\ref{E-t-lb-one-leq-1} and the assumption that $supp$ $u_{n}\subset\left\{
\xi|\ \left\vert \xi\right\vert \geq n\right\}  $, we have
\begin{align*}
\left\vert T_{1}^{1}\right\vert  &  \leq g\left\Vert \left(  1-\mathcal{\tilde
{E}}^{\lambda,+}\right)  \frac{1}{\psi_{ey^{2}}}u_{n}\right\Vert
_{L_{b\psi_{ey}}^{2}}\left\Vert \left(  1-\mathcal{\tilde{E}}^{\lambda
,-}\right)  \frac{1}{\psi_{ey}}u_{n}\right\Vert _{L_{b\psi_{ey}}^{2}}\\
&  \leq g\left\Vert \frac{1}{\psi_{ey^{2}}}u_{n}\right\Vert _{L_{b\psi_{ey}%
}^{2}}\left\Vert \frac{1}{\psi_{ey}}u_{n}\right\Vert _{L_{b\psi_{ey}}^{2}}\\
&  =g\left(  \int b\frac{1}{\psi_{ey^{3}}}\left\vert u_{n}\right\vert ^{2}%
d\xi\right)  ^{\frac{1}{2}}\left(  \int b\frac{1}{\psi_{ey}}\left\vert
u_{n}\right\vert ^{2}d\xi\right)  ^{\frac{1}{2}}\\
&  \leq g\left(  \frac{1}{c^{3}}+C\max_{\left\vert \xi\right\vert \geq
n}\left(  \left\vert \tilde{b}\left(  \xi\right)  \tilde{c}\left(  \xi\right)
\right\vert +\left\vert \tilde{b}\left(  \xi\right)  \right\vert +\left\vert
\tilde{c}\left(  \xi\right)  \right\vert \right)  \right)  ^{\frac{1}{2}}\\
&  \ \ \ \ \ \cdot\left(  \frac{1}{c}+C\max_{\left\vert \xi\right\vert \geq
n}\left(  \left\vert \tilde{b}\left(  \xi\right)  \tilde{c}\left(  \xi\right)
\right\vert +\left\vert \tilde{b}\left(  \xi\right)  \right\vert +\left\vert
\tilde{c}\left(  \xi\right)  \right\vert \right)  \right)  ^{\frac{1}{2}%
}\left\Vert u_{n}\right\Vert _{L^{2}}^{2}\\
&  =\frac{g}{c^{2}}+O\left(  \frac{1}{n}\right)  \text{.}%
\end{align*}
Since
\[
\left[  \frac{1}{\psi_{ey}},1-\mathcal{\tilde{E}}^{\lambda,+}\right]  =\left[
\tilde{c},\mathcal{\tilde{E}}^{\lambda,+}\right]  =\tilde{c}\mathcal{\tilde
{E}}^{\lambda,+}-\mathcal{\tilde{E}}^{\lambda,+}\tilde{c},
\]
we have
\begin{align*}
\left\vert T_{1}^{2}\right\vert  &  \leq g\left\Vert b\psi_{ey}\left[
\frac{1}{\psi_{ey}},1-\mathcal{\tilde{E}}^{\lambda,+}\right]  \frac{1}%
{\psi_{ey}}u_{n}\right\Vert _{L^{2}}\left\Vert \left(  1-\mathcal{\tilde{E}%
}^{\lambda,-}\right)  \frac{1}{\psi_{ey}}u_{n}\right\Vert _{L^{2}}\\
&  \leq C\left(  \left\Vert \tilde{c}\mathcal{\tilde{E}}^{\lambda,+}\left(
\frac{1}{\psi_{ey}}u_{n}\right)  \right\Vert _{L^{2}}+\left\Vert
\mathcal{\tilde{E}}^{\lambda,+}\left(  \tilde{c}\frac{1}{\psi_{ey}}%
u_{n}\right)  \right\Vert _{L^{2}}\right)  .
\end{align*}
Since $\tilde{c}\left(  \xi\right)  $ decays at infinity, by Lemma
\ref{lemma-estimates-E-t} (ii), for $\varepsilon>0\ $(to be fixed later),
there exists $C_{\varepsilon}$ such that
\[
\left\Vert \tilde{c}\mathcal{\tilde{E}}^{\lambda,+}u_{n}\right\Vert _{L^{2}%
}\leq\varepsilon\left\Vert u_{n}\right\Vert _{H^{\frac{1}{2}}}+C_{\varepsilon
}\left\Vert \frac{u_{n}}{1+\xi^{2}}\right\Vert _{L^{2}}\leq\varepsilon
\left\Vert u_{n}\right\Vert _{H^{\frac{1}{2}}}+\frac{C_{\varepsilon}}{n^{2}}.
\]
So
\begin{align*}
\left\Vert \tilde{c}\mathcal{\tilde{E}}^{\lambda,+}\left(  \frac{1}{\psi_{ey}%
}u_{n}\right)  \right\Vert _{L^{2}}  &  \leq\frac{1}{c}\left\Vert \tilde
{c}\mathcal{\tilde{E}}^{\lambda,+}u_{n}\right\Vert _{L^{2}}+\left\Vert
\tilde{c}\mathcal{\tilde{E}}^{\lambda,+}\left(  \tilde{c}u_{n}\right)
\right\Vert _{L^{2}}\\
&  \leq\frac{\varepsilon}{c}\left\Vert u_{n}\right\Vert _{H^{\frac{1}{2}}%
}+\frac{C_{\varepsilon}}{cn^{2}}+C\left\Vert \tilde{c}u_{n}\right\Vert
_{L^{2}}\\
&  \leq\frac{\varepsilon}{c}\left\Vert u_{n}\right\Vert _{H^{\frac{1}{2}}%
}+\frac{C_{\varepsilon}}{cn^{2}}+O\left(  \frac{1}{n}\right)  .
\end{align*}
Since%
\[
\left\Vert \mathcal{\tilde{E}}^{\lambda,+}\left(  \tilde{c}\frac{1}{\psi_{ey}%
}u_{n}\right)  \right\Vert _{L^{2}}\leq C\left\Vert \tilde{c}\frac{1}%
{\psi_{ey}}u_{n}\right\Vert _{L^{2}}=O\left(  \frac{1}{n}\right)  ,
\]
so
\[
\left\vert T_{1}^{2}\right\vert \leq C\left(  \varepsilon\left\Vert
u_{n}\right\Vert _{H^{\frac{1}{2}}}+\frac{C_{\varepsilon}}{n^{2}}+\frac{1}%
{n}\right)
\]
and thus
\[
\left\vert T_{1}\right\vert \leq\frac{g}{c^{2}}+C\left(  \varepsilon\left\Vert
u_{n}\right\Vert _{H^{\frac{1}{2}}}+\frac{C_{\varepsilon}}{n^{2}}+\frac{1}%
{n}\right)  \text{.}%
\]
The term $T_{2}$ can be written as
\begin{align*}
T_{2}  &  =\left(  b\left(  1-\mathcal{\tilde{E}}^{\lambda,+}\right)
\tilde{a}\left(  1-\mathcal{\tilde{E}}^{\lambda,+}\right)  \frac{1}{\psi_{ey}%
}u_{n},u_{n}\right) \\
&  =\left(  b\psi_{ey}\tilde{a}\left(  1-\mathcal{\tilde{E}}^{\lambda
,+}\right)  \frac{1}{\psi_{ey}}u_{n},\left(  1-\mathcal{\tilde{E}}^{\lambda
,-}\right)  \frac{1}{\psi_{ey}}u_{n}\right) \\
&  =\left(  b\psi_{ey}\left(  1-\mathcal{\tilde{E}}^{\lambda,+}\right)
\frac{\tilde{a}}{\psi_{ey}}u_{n},\left(  1-\mathcal{\tilde{E}}^{\lambda
,-}\right)  \frac{1}{\psi_{ey}}u_{n}\right) \\
&  \ \ \ \ \ +\left(  b\psi_{ey}\left[  \tilde{a},\mathcal{\tilde{E}}%
^{\lambda,+}\right]  \frac{1}{\psi_{ey}}u_{n},\left(  1-\mathcal{\tilde{E}%
}^{\lambda,-}\right)  \frac{1}{\psi_{ey}}u_{n}\right) \\
&  =T_{2}^{1}+T_{2}^{2}.
\end{align*}
Similar to the estimates for $T_{1}$, we have
\[
\left\vert T_{2}^{1}\right\vert \leq\left\Vert \frac{\tilde{a}}{\psi_{ey}%
}u_{n}\right\Vert _{L_{b\psi_{ey}}^{2}}\left\Vert \frac{1}{\psi_{ey}}%
u_{n}\right\Vert _{L_{b\psi_{ey}}^{2}}\leq C\max_{\left\vert \xi\right\vert
\geq n}\left\vert \tilde{a}\left(  \xi\right)  \right\vert =O\left(  \frac
{1}{n}\right)
\]
and
\begin{align*}
\left\vert T_{2}^{2}\right\vert  &  \leq C(\left\Vert \tilde{a}\mathcal{\tilde
{E}}^{\lambda,+}\left(  \frac{1}{\psi_{ey}}u_{n}\right)  \right\Vert _{L^{2}%
}+\left\Vert \mathcal{\tilde{E}}^{\lambda,+}\left(  \tilde{a}\frac{1}%
{\psi_{ey}}u_{n}\right)  \right\Vert _{L^{2}})\\
&  \leq C\left(  \varepsilon\left\Vert u_{n}\right\Vert _{H^{\frac{1}{2}}%
}+\frac{C_{\varepsilon}^{\prime}}{n^{2}}+\frac{1}{n}\right)  .
\end{align*}
So
\[
\left\vert T_{2}\right\vert \leq C\left(  \varepsilon\left\Vert u_{n}%
\right\Vert _{H^{\frac{1}{2}}}+\frac{C_{\varepsilon}^{\prime}}{n^{2}}+\frac
{1}{n}\right)
\]
Thus
\begin{align*}
\left\vert \operatorname{Re}\left(  b\mathcal{\tilde{C}}^{\lambda}%
P_{ey}\left(  \xi\right)  \mathcal{\tilde{C}}^{\lambda}u_{n},u_{n}\right)
\right\vert  &  \leq\left\vert T_{1}\right\vert +\left\vert T_{2}\right\vert
\\
&  \leq\frac{g}{c^{2}}+C\left(  \varepsilon\left\Vert u_{n}\right\Vert
_{H^{\frac{1}{2}}}+\frac{C_{\varepsilon}+C_{\varepsilon}^{\prime}}{n^{2}%
}+\frac{1}{n}\right)  .
\end{align*}
Combining with (\ref{estimates-N}), we have%
\begin{align*}
&  \ \ \ \ \operatorname{Re}\left(  \left(  \mathcal{A}^{\lambda}-z\right)
u_{n},u_{n}\right) \\
&  \geq\left(  1-\delta\right)  \frac{1}{h}+C_{0}\delta\left\Vert
u_{n}\right\Vert _{H^{\frac{1}{2}}}^{2}-\frac{1}{2}\delta_{0}-\frac{g}{c^{2}%
}-C\left(  \varepsilon\left\Vert u_{n}\right\Vert _{H^{\frac{1}{2}}}%
+\frac{C_{\varepsilon}+C_{\varepsilon}^{\prime}}{n^{2}}+\frac{1}{n}\right) \\
&  =\frac{1}{2}\delta_{0}-\frac{\delta}{h}+\left(  C_{0}\delta-C\varepsilon
\right)  \left\Vert u_{n}\right\Vert _{H^{\frac{1}{2}}}-C\left(
\frac{C_{\varepsilon}+C_{\varepsilon}}{n^{2}}+\frac{1}{n}\right) \\
&  \geq\frac{1}{4}\delta_{0},\ \text{when }n\text{ is large enough,}%
\end{align*}
by choosing $\varepsilon>0$ and $\delta\in\left(  0,1\right)  $ such that
$\varepsilon\leq\frac{C_{0}}{C}\delta$ and $\delta\leq\frac{1}{8}\delta_{0}h$.
This finishes the proof of the lemma.
\end{proof}

To study the essential spectrum of $\mathcal{A}^{\lambda}$, we first look at
the Zhislin Spectrum $Z\left(  \mathcal{A}^{\lambda}\right)  $
(\cite{hislop-sig-book}). A Zhislin sequence for $\mathcal{A}^{\lambda}$ and
$z\in\mathbb{C}$ is a sequence $\left\{  u_{n}\right\}  \in H^{1}$,
$\left\Vert u_{n}\right\Vert _{2}=1,\ supp\ u_{n}\subset\left\{
\xi|\ \left\vert \xi\right\vert \geq n\right\}  $ and $\left\Vert \left(
\mathcal{A}^{\lambda}-z\right)  u_{n}\right\Vert _{2}\rightarrow0$ as
$n\rightarrow\infty$. The set of all $z$ such that a Zhislin sequence exists
for $\mathcal{A}^{\lambda}$ and $z$ is denoted by $Z\left(  \mathcal{A}%
^{\lambda}\right)  $. From the above definition and Lemma
\ref{lemma-quadrtic-bound-lb}, we readily have
\begin{equation}
Z\left(  \mathcal{A}^{\lambda}\right)  \subset\left\{  z\in\mathbb{C}%
|\ \operatorname{Re}z\geq\frac{1}{2}\delta_{0}\right\}  \text{.}
\label{bound-zhislin}%
\end{equation}
Another related spectrum is the Weyl spectrum $W\left(  \mathcal{A}^{\lambda
}\right)  $ (\cite{hislop-sig-book}). A Weyl sequence for $\mathcal{A}%
^{\lambda}$ and $z\in\mathbb{C\ }$\ is a sequence $\left\{  u_{n}\right\}  \in
H^{1},\left\Vert u_{n}\right\Vert _{2}=1,\ u_{n}\rightarrow0$ weakly in
$L^{2}$ and $\left\Vert \left(  \mathcal{A}^{\lambda}-z\right)  u_{n}%
\right\Vert _{2}\rightarrow0$ as $n\rightarrow\infty$. The set $W\left(
\mathcal{A}^{\lambda}\right)  $ is all $z$ such that a Weyl sequence exists
for $\mathcal{A}^{\lambda}$ and $z$. By (\cite[Theorem 10.10]{hislop-sig-book}%
), $W\left(  \mathcal{A}^{\lambda}\right)  \subset\sigma_{\text{ess}}\left(
\mathcal{A}^{\lambda}\right)  $ and the boundary of $\sigma_{\text{ess}%
}\left(  \mathcal{A}^{\lambda}\right)  $ is contained in $W\left(
\mathcal{A}^{\lambda}\right)  $. So to prove Proposition \ref{essential-spec},
it suffices to show that $W\left(  \mathcal{A}^{\lambda}\right)  =Z\left(
\mathcal{A}^{\lambda}\right)  $. Since if this is true, then
(\ref{bound-essential}) follows from (\ref{bound-zhislin}). By (\cite[Theorem
10.12]{hislop-sig-book}), the proof of $W\left(  \mathcal{A}^{\lambda}\right)
=Z\left(  \mathcal{A}^{\lambda}\right)  $ can be reduced to prove the
following lemma.

\begin{lemma}
\label{lemma-commu-d}Given $\lambda>0$. Let $\chi\in C_{0}^{\infty}\left(
\mathbf{R}\right)  $ be a cut-off function such that $\chi|_{\left\{
\left\vert \xi\right\vert \leq R_{0}\right\}  }=1$ for some $R_{0}>0$. Define
$\chi_{d}=\chi\left(  \xi/d\right)  ,\ d>0.$ Then for each $d,\ \chi
_{d}\left(  \mathcal{A}^{\lambda}-z\right)  ^{-1}$ is compact for some
$z\in\rho\left(  \mathcal{A}^{\lambda}\right)  $, and that there exists
$\varepsilon\left(  d\right)  \rightarrow0$ as $d\rightarrow\infty$ such that
for any $u\in C_{0}^{\infty}\left(  \mathbf{R}\right)  $,
\begin{equation}
\left\Vert \left[  \mathcal{A}^{\lambda},\chi_{d}\right]  u\right\Vert
_{2}\leq\varepsilon\left(  d\right)  \left(  \left\Vert \mathcal{A}^{\lambda
}u\right\Vert _{2}+\left\Vert u\right\Vert _{2}\right)  .
\label{estimate-comm-d}%
\end{equation}

\end{lemma}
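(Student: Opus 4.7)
The plan is to prove compactness and the commutator estimate separately. For compactness of $\chi_d(\mathcal{A}^\lambda-z)^{-1}$, the strategy is to invert $\mathcal{A}^\lambda-z$ via Lax--Milgram using the coercivity from Lemma \ref{lemma-operator-N}, then exploit the compact support of $\chi_d$ through Rellich's theorem. For the commutator estimate, I will expand $[\mathcal{A}^\lambda,\chi_d]$ by Leibniz and reduce it to bounds on $[\mathcal{N},\chi_d]$ and $[\tilde{\mathcal{C}}^\lambda,\chi_d]$, using that the multiplication operators $b=1+\mathcal{N}w$ and $P_{ey}(\xi)$ commute with $\chi_d$.

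For the compactness step, the bound $n(k)\asymp 1+|k|$ on the Fourier symbol of $\mathcal{N}$ makes $\mathcal{N}:H^{1}(\mathbf{R})\to L^{2}(\mathbf{R})$ an isomorphism, while $K^{\lambda}:=b\,\tilde{\mathcal{C}}^{\lambda}P_{ey}\tilde{\mathcal{C}}^{\lambda}$ is bounded on $L^{2}$ by Lemma \ref{lemma-C-tilta}(a) together with (\ref{bound-cw}). Applying Lemma \ref{lemma-operator-N} to $\operatorname{Re}((\mathcal{A}^{\lambda}-z)u,u)$ shows that for $\operatorname{Re} z$ sufficiently negative the form is coercive on $H^{1/2}(\mathbf{R})$, so Lax--Milgram yields a bijection $H^{1/2}\to H^{-1/2}$. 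Rewriting $\mathcal{N}u=(z-K^{\lambda})u+f$ with $f\in L^{2}$ then bootstraps $u$ into $H^{1}$, giving $(\mathcal{A}^{\lambda}-z)^{-1}:L^{2}\to H^{1}$ bounded. Composing with $\chi_{d}$, the image lies in $H^{1}$-functions supported in a fixed compact set, so $\chi_{d}(\mathcal{A}^{\lambda}-z)^{-1}$ is compact on $L^{2}$ by Rellich.

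For the commutator estimate, Leibniz together with the commutativity of multiplication operators gives
\[
[\mathcal{A}^{\lambda},\chi_{d}] = [\mathcal{N},\chi_{d}] + b\,[\tilde{\mathcal{C}}^{\lambda},\chi_{d}]\,P_{ey}\tilde{\mathcal{C}}^{\lambda} + b\,\tilde{\mathcal{C}}^{\lambda}P_{ey}\,[\tilde{\mathcal{C}}^{\lambda},\chi_{d}].
\]
Using $\tilde{\mathcal{C}}^{\lambda}=(1-\tilde{\mathcal{E}}^{\lambda,+})/\psi_{ey}$ and the direct computation $[\tilde{\mathcal{D}},\chi_{d}]=\psi_{ey}\chi_{d}'/b$, the resolvent identity yields
\[
[\tilde{\mathcal{E}}^{\lambda,+},\chi_{d}] = -\lambda\,(\lambda+\tilde{\mathcal{D}})^{-1}\,\frac{\psi_{ey}\chi_{d}'}{b}\,(\lambda+\tilde{\mathcal{D}})^{-1}.
\]
Since $\tilde{\mathcal{D}}$ is skew-adjoint on $L^{2}_{b\psi_{ey}}$, each resolvent has norm $1/\lambda$; combined with $\|\chi_{d}'\|_{\infty}\leq\|\chi'\|_{\infty}/d$ this gives $\|[\tilde{\mathcal{C}}^{\lambda},\chi_{d}]\|_{L^{2}\to L^{2}}=O(1/(\lambda d))$, so the two $\tilde{\mathcal{C}}^{\lambda}$-commutator terms contribute $O(1/(\lambda d))\|u\|_{L^{2}}$. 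For $[\mathcal{N},\chi_{d}]$, split $\mathcal{N}=H\partial_{\xi}+R$ where $H$ is the Hilbert transform (so $H\partial_{\xi}=|D|$) and $R$ is the Fourier multiplier with bounded symbol $2|k|/(e^{2|k|h}-1)$. Writing $[H\partial_{\xi},\chi_{d}]=H\chi_{d}'+[H,\chi_{d}]\partial_{\xi}$, both parts have operator norm $O(1/d)$ (the latter via the Calderón commutator theorem), yielding $\|[H\partial_{\xi},\chi_{d}]u\|_{L^{2}}\leq (C/d)(\|u\|_{L^{2}}+\|u\|_{H^{1}})$. A kernel estimate similarly gives $\|[R,\chi_{d}]\|_{L^{2}\to L^{2}}=O(1/d)$. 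Combining these and invoking the ellipticity $\|u\|_{H^{1}}\leq C(\|\mathcal{A}^{\lambda}u\|_{L^{2}}+\|u\|_{L^{2}})$ produces (\ref{estimate-comm-d}) with $\varepsilon(d)=C(\lambda)/d$.

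The main technical obstacle is the $O(1/d)$ bound on $[\mathcal{N},\chi_{d}]$: propagating the gain $\|\chi_{d}'\|_{\infty}=O(1/d)$ through the singular-integral structure of the first-order operator $\mathcal{N}$ requires Calderón commutator-type estimates, and the lower-order remainder $R=\mathcal{N}-|D|$ needs a careful kernel analysis since its convolution kernel decays only like $1/|x|^{2}$ at infinity (reflecting the $|k|$-type non-smoothness of the symbol near $k=0$).
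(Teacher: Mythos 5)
Your argument reaches the same conclusion as the paper but via a genuinely different route in both halves, so a comparison is worthwhile. For compactness the paper is more direct: $\mathcal{N}$ is positive self-adjoint with domain $H^1$ and $\mathcal{K}^\lambda = b\tilde{\mathcal{C}}^\lambda P_{ey}\tilde{\mathcal{C}}^\lambda$ is $L^2$-bounded with norm $\le M$, so for $z=-k$, $k>M$, a Neumann series gives $(\mathcal{A}^\lambda+k)^{-1}:L^2\to H^1$ bounded and Rellich finishes; your Lax--Milgram plus bootstrap is correct but heavier than needed. For the commutator, your treatment of the $\mathcal{K}^\lambda$-piece is identical in substance to the paper's: the resolvent identity $[\tilde{\mathcal{E}}^{\lambda,+},\chi_d]=-\tfrac{1}{\lambda}\tilde{\mathcal{E}}^{\lambda,+}\tfrac{\psi_{ey}\chi_d'}{b}\tilde{\mathcal{E}}^{\lambda,+}$ with $\|\tilde{\mathcal{E}}^{\lambda,+}\|\le C$ gives $O(1/(\lambda d))$. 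The real divergence is the $\mathcal{N}$-piece. The paper factors $\mathcal{N}=\mathcal{N}_2\mathcal{N}_1$ with $\mathcal{N}_1=1+\partial_\xi$ and $\mathcal{N}_2$ the multiplier with symbol $n_2(k)=k/\big(\tanh(kh)(1+ik)\big)$, so $[\mathcal{N}_1,\chi_d]=\chi_d'$ is $O(1/d)$ trivially and $[\mathcal{N}_2,\chi_d]$ is estimated by an elementary Hilbert--Schmidt bound using only $n_2'\in L^2$, giving $O(d^{-1/2})$. Your additive split $\mathcal{N}=|D|+R$ with the Calder\'on commutator theorem for $[H,\chi_d]\partial_\xi$ is legitimate, but it imports a strictly deeper singular-integral result than what the paper's factorization requires.

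One quantitative claim in your sketch is off. You assert $\|[R,\chi_d]\|_{L^2\to L^2}=O(1/d)$, but this is not attainable by the kernel argument you indicate. The symbol $r(k)=2|k|/(e^{2|k|h}-1)$ has $r'(k)=-\operatorname{sign}(k)+O(k)$ near $k=0$, i.e.\ $r'$ has a jump; hence $\xi\check r(\xi)\in L^2$ but $\xi\check r(\xi)\notin L^1$ (it decays only like $1/|\xi|$), so the Young-type estimate fails. The Hilbert--Schmidt estimate (precisely the one the paper uses for $\mathcal{N}_2$, exploiting only $r'\in L^2$) gives $O(d^{-1/2})$, not $O(1/d)$. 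Since the lemma only requires $\varepsilon(d)\to 0$, your proof still works, but the final rate should be stated as $\varepsilon(d)=O(d^{-1/2})$ rather than $C(\lambda)/d$.
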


\begin{proof}
Since $\mathcal{A}^{\lambda}=\mathcal{N}+\mathcal{K}^{\lambda}$, where
$\mathcal{N}$ is positive and
\begin{equation}
\mathcal{K}^{\lambda}=b\mathcal{\tilde{C}}^{\lambda}P_{ey}\mathcal{\tilde{C}%
}^{\lambda}:L^{2}\rightarrow L^{2} \label{defn-cal-K}%
\end{equation}
is bounded, so if $z=-k$ with $k>0$ sufficiently large, then $z\in\rho\left(
\mathcal{A}^{\lambda}\right)  $. The compactness of $\chi_{d}\left(
\mathcal{A}^{\lambda}+k\right)  ^{-1}$ follows from the local compactness of
$H^{1}\hookrightarrow L^{2}$. To show (\ref{estimate-comm-d}), we note that
the graph norm of $\mathcal{A}^{\lambda}$ is equivalent to $\left\Vert
\cdot\right\Vert _{H^{1}}$. First, we write
\[
\left[  \mathcal{K}^{\lambda},\chi_{d}\right]  =b\left[  \mathcal{\tilde{C}%
}^{\lambda},\chi_{d}\right]  P_{ey}\mathcal{\tilde{C}}^{\lambda}%
+b\mathcal{\tilde{C}}^{\lambda}P_{ey}\left[  \mathcal{\tilde{C}}^{\lambda
},\chi_{d}\right]  .
\]
We have
\begin{align*}
\left[  \mathcal{\tilde{C}}^{\lambda},\chi_{d}\right]   &  =\left[  \left(
1-\frac{\mathcal{\lambda}}{\lambda+\mathcal{\tilde{D}}}\right)  \frac{1}%
{\psi_{ey}},\chi_{d}\right] \\
&  =-\left[  \frac{\mathcal{\lambda}}{\lambda+\mathcal{\tilde{D}}},\chi
_{d}\right]  \frac{1}{\psi_{ey}}=-\frac{\mathcal{\lambda}}{\lambda
+\mathcal{\tilde{D}}}\left[  \chi_{d},\mathcal{\tilde{D}}\right]  \frac
{1}{\lambda+\mathcal{\tilde{D}}}\frac{1}{\psi_{ey}}\\
&  =\frac{1}{\lambda d}\mathcal{\tilde{E}}^{\lambda,+}\left(  \frac{1}{b}%
\chi^{\prime}\left(  \xi/d\right)  \psi_{ey}\right)  \mathcal{\tilde{E}%
}^{\lambda,+}\frac{1}{\psi_{ey}}.
\end{align*}
Since $\left\Vert \mathcal{\tilde{E}}^{\lambda,+}\right\Vert _{L^{2}%
\rightarrow L^{2}}$ is bounded, so
\[
\left\Vert \left[  \mathcal{\tilde{C}}^{\lambda},\chi_{d}\right]  \right\Vert
_{L^{2}\rightarrow L^{2}}\leq\frac{C}{\lambda d}%
\]
and therefore
\begin{equation}
\left\Vert \left[  K^{\lambda},\chi_{d}\right]  u\right\Vert _{2}\leq\frac
{C}{\lambda d}\left\Vert u\right\Vert _{2}. \label{inter1}%
\end{equation}
Denote $\mathcal{N}_{1}=1+\frac{d}{d\xi}$ and $\mathcal{N}_{2}$ is the Fourier
multiplier operator with the symbol
\begin{equation}
n_{2}\left(  k\right)  =\frac{k}{\tanh\left(  kh\right)  \left(  1+ik\right)
}. \label{defn-n2}%
\end{equation}
Then $\mathcal{N}=\mathcal{N}_{2}\mathcal{N}_{1}$ and thus
\[
\left[  \mathcal{N},\chi_{d}\right]  =\mathcal{N}_{2}\left[  \mathcal{N}%
_{1},\chi_{d}\right]  +\left[  \mathcal{N}_{2},\chi_{d}\right]  \mathcal{N}%
_{1}.
\]
Since $\left[  \mathcal{N}_{1},\chi_{d}\right]  =\frac{1}{d}\chi^{\prime
}\left(  \xi/d\right)  $ and $\left\Vert \mathcal{N}_{2}\right\Vert
_{L^{2}\rightarrow L^{2}}$ is bounded, we have
\[
\left\Vert \mathcal{N}_{2}\left[  \mathcal{N}_{1},\chi_{d}\right]
u\right\Vert _{2}\leq\frac{C}{d}\left\Vert u\right\Vert _{2}\text{.}%
\]
To estimate $\left[  \mathcal{N}_{2},\chi_{d}\right]  $, for $v\in
C_{0}^{\infty}\left(  \mathbf{R}\right)  $, we follow \cite[p.127-128]{cordes}
to write
\begin{align*}
\left[  \mathcal{N}_{2},\chi_{d}\right]  v  &  =-\left(  2\pi\right)
^{-\frac{1}{2}}\int\check{n}_{2}\left(  \xi-y\right)  \left(  \chi_{d}\left(
\xi\right)  -\chi_{d}\left(  y\right)  \right)  v\left(  y\right)  dy\\
&  =-\int_{0}^{1}\int\left(  2\pi\right)  ^{-\frac{1}{2}}\left(  \xi-y\right)
\check{n}\left(  \xi-y\right)  \chi_{d}^{\prime}\left(  \rho\left(
\xi-y\right)  +y\right)  v\left(  y\right)  dyd\rho\\
&  =\int_{0}^{1}A_{\rho}v\ d\rho,
\end{align*}
where $A_{\rho}$ is the integral operator with the kernel function
\[
K_{\rho}\left(  \xi,y\right)  =-\left(  2\pi\right)  ^{-\frac{1}{2}}\left(
\xi-y\right)  \check{n}_{2}\left(  \xi-y\right)  \chi_{d}^{\prime}\left(
\rho\left(  \xi-y\right)  +y\right)  .
\]
Note that $\alpha\left(  \xi\right)  =\xi\check{n}_{2}\left(  \xi\right)  $ is
the inverse Fourier transformation of $in_{2}^{\prime}\left(  k\right)  $ and
obviously $n_{2}^{\prime}\left(  k\right)  \in L^{2}$, so $\alpha\left(
\xi\right)  \in L^{2}$. Thus
\begin{align*}
\int\int\left\vert K_{\rho}\left(  \xi,y\right)  \right\vert ^{2}dxdy  &
=2\pi\int\int\left\vert \alpha\right\vert ^{2}\left(  \xi-y\right)  \left\vert
\chi_{d}^{\prime}\right\vert ^{2}\left(  \rho\left(  \xi-y\right)  +y\right)
\ d\xi dy\\
&  =2\pi\int\int\left\vert \alpha\right\vert ^{2}\left(  \xi\right)
\left\vert \chi_{d}^{\prime}\right\vert ^{2}\left(  y\right)  \ d\xi dy\\
&  =2\pi\left\Vert \alpha\right\Vert _{L_{2}}^{2}\left\Vert \chi_{d}^{\prime
}\right\Vert _{L^{2}}^{2}=\frac{2\pi}{d}\left\Vert \alpha\right\Vert _{L_{2}%
}^{2}\left\Vert \chi^{\prime}\right\Vert _{L^{2}}^{2}.
\end{align*}
So
\[
\left\Vert \left[  \mathcal{N}_{2},\chi_{d}\right]  \right\Vert _{L^{2}%
\rightarrow L^{2}}\leq\frac{C}{d^{\frac{1}{2}}}%
\]
and
\[
\left\Vert \left[  \mathcal{N}_{2},\chi_{d}\right]  \mathcal{N}_{1}%
u\right\Vert _{L^{2}}\leq\frac{C}{d^{\frac{1}{2}}}\left\Vert \mathcal{N}%
_{1}u\right\Vert _{L^{2}}\leq\frac{C}{d^{\frac{1}{2}}}\left\Vert u\right\Vert
_{H^{1}}.
\]
Thus
\[
\left\Vert \left[  \mathcal{N},\chi_{d}\right]  u\right\Vert _{L^{2}}\leq
C\left(  \frac{1}{d^{\frac{1}{2}}}+\frac{1}{d}\right)  \left\Vert u\right\Vert
_{H^{1}}.
\]
Combining above with (\ref{inter1}), we get the estimate
(\ref{estimate-comm-d}). This finishes the proof of the lemma and thus
Proposition \ref{essential-spec}.
\end{proof}

Recall that to find growing modes, we need to find $\lambda>0$ such that
$\mathcal{A}^{\lambda}$ has a nontrivial kernel. We use a continuity argument,
by comparing the spectra of $\mathcal{A}^{\lambda}$ for $\lambda\ $near $0$
and infinity. First, we study the case near infinity.

\begin{lemma}
\label{lemma-no-eigen-infy}There exists $\Lambda>0$, such that when
$\lambda>\Lambda$, $\mathcal{A}^{\lambda}$ has no eigenvalues in $\left\{
z|\ \operatorname{Re}z\leq0\right\}  $.
\end{lemma}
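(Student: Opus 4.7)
The plan is a direct coercivity argument: I will show that $\operatorname{Re}\left(  \mathcal{A}^{\lambda}u,u\right)  >0$ for every $u\in H^{1}\left(  \mathbf{R}\right)  \setminus\left\{  0\right\}  $ whenever $\lambda$ is sufficiently large, which immediately rules out eigenvalues $z$ with $\operatorname{Re}z\leq0$ via $\left(  \mathcal{A}^{\lambda}u,u\right)  =z\left\Vert u\right\Vert _{L^{2}}^{2}$. Splitting $\mathcal{A}^{\lambda}=\mathcal{N}+\mathcal{K}^{\lambda}$ with $\mathcal{K}^{\lambda}$ from (\ref{defn-cal-K}), Lemma \ref{lemma-operator-N} applied with $\delta=\frac{1}{2}$ supplies the coercive lower bound
\[
\left(  \mathcal{N}u,u\right)  \geq\frac{1}{2h}\left\Vert u\right\Vert _{L^{2}}^{2}+\frac{C_{0}}{2}\left\Vert u\right\Vert _{H^{\frac{1}{2}}}^{2},
\]
so the task reduces to controlling $\left\vert \left(  \mathcal{K}^{\lambda}u,u\right)  \right\vert $ by something small relative to this when $\lambda$ is large. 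The main obstacle is that Lemma \ref{lemma-C-tilta}(c) only asserts \emph{strong}, not operator-norm, convergence $\mathcal{\tilde{C}}^{\lambda}\rightarrow0$, so a naive bound via $\left\Vert \mathcal{K}^{\lambda}\right\Vert _{L^{2}\rightarrow L^{2}}$ does not decay in $\lambda$; I will trade half a derivative for a quantitative $\lambda^{-1/2}$ gain.

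The key estimate is
\[
\left\Vert \mathcal{\tilde{C}}^{\lambda}u\right\Vert _{L^{2}\left(  \mathbf{R}\right)  }\leq\frac{C}{\sqrt{\lambda}}\left\Vert u\right\Vert _{H^{\frac{1}{2}}\left(  \mathbf{R}\right)  }.
\]
Writing $\mathcal{\tilde{C}}^{\lambda}=\left(  1-\mathcal{\tilde{E}}^{\lambda,+}\right)  /\psi_{ey}$ and applying the spectral measure $\left\{  \tilde{M}_{\alpha}\right\}  $ of $\tilde{R}=-i\mathcal{\tilde{D}}$ on $L_{b\psi_{ey}}^{2}$ introduced in the proof of Lemma \ref{lemma-property-E-t-lb}, the spectral identity $1-\mathcal{\tilde{E}}^{\lambda,+}=\int i\alpha\left(  \lambda+i\alpha\right)  ^{-1}\,d\tilde{M}_{\alpha}$ gives
\[
\left\Vert \left(  1-\mathcal{\tilde{E}}^{\lambda,+}\right)  v\right\Vert _{L_{b\psi_{ey}}^{2}}^{2}=\int_{\mathbf{R}}\frac{\alpha^{2}}{\lambda^{2}+\alpha^{2}}\,d\left\Vert \tilde{M}_{\alpha}v\right\Vert _{L_{b\psi_{ey}}^{2}}^{2}\leq\frac{1}{\lambda}\left\Vert v\right\Vert _{\tilde{H}^{\frac{1}{2}}}^{2}
\]
via the pointwise inequality $\alpha^{2}/(\lambda^{2}+\alpha^{2})\leq\left\vert \alpha\right\vert /\lambda$ (AM--GM). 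The norm equivalence $\tilde{H}^{\frac{1}{2}}\simeq H^{\frac{1}{2}}$ established inside the proof of Lemma \ref{lemma-estimates-E-t}(i), together with the boundedness of multiplication by $1/\psi_{ey}$ on $H^{\frac{1}{2}}$, converts this into the displayed bound.

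Chaining the above with the uniform $L^{2}\rightarrow L^{2}$ bound of Lemma \ref{lemma-C-tilta}(a) to treat the outer $\mathcal{\tilde{C}}^{\lambda}$ and with the $L^{\infty}$ bounds on $b$ and $P_{ey}$ yields
\[
\left\vert \left(  \mathcal{K}^{\lambda}u,u\right)  \right\vert \leq C\left\Vert \mathcal{\tilde{C}}^{\lambda}u\right\Vert _{L^{2}}\left\Vert u\right\Vert _{L^{2}}\leq\frac{C^{\prime}}{\sqrt{\lambda}}\left\Vert u\right\Vert _{H^{\frac{1}{2}}}\left\Vert u\right\Vert _{L^{2}}.
\]
Young's inequality $\frac{C^{\prime}}{\sqrt{\lambda}}ab\leq\frac{C_{0}}{4}a^{2}+\frac{\left(  C^{\prime}\right)  ^{2}}{C_{0}\lambda}b^{2}$ combined with the coercivity bound on $\mathcal{N}$ gives
\[
\operatorname{Re}\left(  \mathcal{A}^{\lambda}u,u\right)  \geq\left(  \frac{1}{2h}-\frac{\left(  C^{\prime}\right)  ^{2}}{C_{0}\lambda}\right)  \left\Vert u\right\Vert _{L^{2}}^{2}+\frac{C_{0}}{4}\left\Vert u\right\Vert _{H^{\frac{1}{2}}}^{2}>0
\]
for every $u\neq0$ once $\lambda>\Lambda:=2h\left(  C^{\prime}\right)  ^{2}/C_{0}$. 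The hardest step is the spectral-calculus passage from the strong convergence of Lemma \ref{lemma-C-tilta}(c) to the quantitative $H^{\frac{1}{2}}\rightarrow L^{2}$ decay; once that trade of half a derivative for $\lambda^{-1/2}$ smallness is in hand, the remaining coercivity chain is routine.
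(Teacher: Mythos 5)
Your proof is correct, but it takes a genuinely different and in fact more economical route than the paper's. The paper proceeds by contradiction: it assumes a sequence $\lambda_n\to\infty$, $\operatorname{Re}k_n\leq 0$, normalizes the eigenfunctions $u_n$ in the decaying-weight space $L^2_e$, establishes an a priori $H^{1/2}$ bound via the same quadratic-form bookkeeping as Lemma \ref{lemma-quadrtic-bound-lb} (but invoking Lemma \ref{lemma-estimates-E-t}(iv), which is only qualitative strong convergence of $1-\mathcal{\tilde{E}}^{\lambda,\pm}$), extracts a nonzero weak limit $u_\infty$, and concludes $\mathcal{N}u_\infty = k_\infty u_\infty$ with $\operatorname{Re}k_\infty\leq 0$, contradicting $\mathcal{N}>0$. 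You instead prove a clean quantitative operator bound $\left\Vert \left(1-\mathcal{\tilde{E}}^{\lambda,+}\right)v\right\Vert_{L^2}\leq C\lambda^{-1/2}\left\Vert v\right\Vert_{H^{1/2}}$ by reading off $\alpha^2/(\lambda^2+\alpha^2)\leq|\alpha|/\lambda$ in the spectral representation of $\tilde{R}=-i\mathcal{\tilde{D}}$, then pushing it through $\mathcal{\tilde{C}}^\lambda = (1-\mathcal{\tilde{E}}^{\lambda,+})\psi_{ey}^{-1}$ using the $\tilde{H}^{1/2}\simeq H^{1/2}$ equivalence already proved in Lemma \ref{lemma-estimates-E-t}(i) and the fact that $1/\psi_{ey}=1/c+\tilde{c}(\xi)$ is a $W^{1,\infty}$ multiplier on $H^{1/2}$. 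This turns the strong convergence $\mathcal{\tilde{C}}^\lambda\to 0$ into a rate in the $H^{1/2}\to L^2$ topology and gives, via Lemma \ref{lemma-operator-N} and Young's inequality, strict positivity of the numerical range: $\operatorname{Re}(\mathcal{A}^\lambda u,u)>0$ for all $u\neq 0$ once $\lambda>\Lambda$, which immediately excludes eigenvalues in $\{\operatorname{Re}z\leq 0\}$ (and in fact any numerical-range point there). What this buys you is directness: no contradiction, no compactness, no need for the $L^2_e$ normalization or the weak-limit-is-nonzero argument, and an explicit threshold $\Lambda$. What the paper's longer route buys is reusability: the a priori estimate (\ref{claim-bound-infy}) it establishes along the way is essentially the same estimate as Lemma \ref{lemma-apriori}, which is then needed again in the moving-kernel analysis near $\lambda=0$ where your $\lambda^{-1/2}$ trick degrades rather than helps. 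So the paper's proof is longer here but amortizes the work, while yours is optimal for this lemma in isolation.
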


\begin{proof}
Suppose otherwise, then there exists a sequence $\left\{  \lambda_{n}\right\}
\rightarrow\infty$, and $\left\{  k_{n}\right\}  \in\mathbb{C},\left\{
u_{n}\right\}  \in$ $H^{1}\left(  \mathbf{R}\right)  $, such that
$\operatorname{Re}k_{n}\leq0$ and $\left(  \mathcal{A}^{\lambda_{n}}%
-k_{n}\right)  u_{n}=0$. Since $\left\Vert \mathcal{A}^{\lambda}%
-\mathcal{N}\right\Vert =\mathcal{\ }\left\Vert \mathcal{K}^{\lambda
}\right\Vert \leq M$ for some constant $M$ independent of $\lambda$ and
$\mathcal{N}$ is a self-adjoint positive operator, all discrete eigenvalues of
$\mathcal{A}^{\lambda}$ lie in
\[
D_{M}=\left\{  z|\ \operatorname{Re}z\geq-M\text{ and }\left\vert
\operatorname{Im}z\right\vert \leq M\right\}  .
\]
Therefore, $k_{n}\rightarrow$ $k_{\infty}\in D_{M}$ with $\operatorname{Re}%
k_{\infty}\leq0$. Denote
\begin{equation}
e\left(  \xi\right)  =\max\left\{  \left\vert \tilde{a}\left(  \xi\right)
\right\vert ,\left\vert \tilde{b}\left(  \xi\right)  \right\vert ,\ \left\vert
\tilde{c}\left(  \xi\right)  \right\vert \right\}  , \label{defn-e}%
\end{equation}
where $\tilde{a}\left(  \xi\right)  ,\tilde{b}\left(  \xi\right)  ,\ \tilde
{c}\left(  \xi\right)  $ are defined in (\ref{defn-a}) and
(\ref{defn-b,c-tilta}). Then $e\left(  \xi\right)  \rightarrow0$ as
$\left\vert \xi\right\vert \rightarrow\infty$. Define the $e\left(
\xi\right)  $-weighted $L^{2}$ space $L_{e}^{2}$ with the norm
\begin{equation}
\left\Vert u\right\Vert _{L_{e}^{2}}=\left(  \int e\left(  \xi\right)
\left\vert u\right\vert ^{2}\ d\xi\right)  ^{\frac{1}{2}}. \label{defn-L2-e}%
\end{equation}
We normalize $u_{n}$ by setting $\left\Vert u_{n}\right\Vert _{L_{e}^{2}}=1$.
We claim that
\begin{equation}
\left\Vert u_{n}\right\Vert _{H^{\frac{1}{2}}}\leq C\text{, for a constant
}C\text{ independent of }n\text{.} \label{claim-bound-infy}%
\end{equation}
Assuming (\ref{claim-bound-infy}), we have $u_{n}\rightarrow u_{\infty}$
weakly in $H^{\frac{1}{2}}$. Moreover, $u_{\infty}\neq0$. To show this, we
choose $R>0$ large enough such that $\max_{\left\vert \xi\right\vert \geq
R}e\left(  \xi\right)  \leq\frac{1}{2C}$. Then
\[
\int_{\left\vert \xi\right\vert \geq R}e\left(  \xi\right)  \left\vert
u_{n}\right\vert ^{2}\ d\xi\leq\frac{1}{2C}\left\Vert u_{n}\right\Vert
_{L^{2}}\leq\frac{1}{2}.
\]
Since $u_{n}\rightarrow u_{\infty}$ strongly in $L^{2}(\left\{  \left\vert
\xi\right\vert \leq R\right\}  )$, we have
\[
\int_{\left\vert \xi\right\vert \leq R}e\left(  \xi\right)  \left\vert
u_{\infty}\right\vert ^{2}d\xi=\lim_{n\rightarrow\infty}\int_{\left\vert
\xi\right\vert \leq R}e\left(  \xi\right)  \left\vert u_{n}\right\vert
^{2}d\xi\geq\frac{1}{2}%
\]
and thus $u_{\infty}\neq0$. By Lemma \ref{lemma-C-tilta}, $\mathcal{A}%
^{\lambda_{n}}\rightarrow\mathcal{N}$ strongly in $L^{2}$, therefore
$\mathcal{A}^{\lambda_{n}}u_{n}\rightarrow$ $\mathcal{N}u_{\infty}$ weakly.
Thus $\mathcal{N}u_{\infty}=k_{\infty}u_{\infty}$. Since $\operatorname{Re}%
k_{\infty}\leq0$, this is a contradiction to that $\mathcal{N}>0$. It remains
to show (\ref{claim-bound-infy}). The proof is quite similar to that of Lemma
\ref{lemma-quadrtic-bound-lb}, so we only sketch it. From $\left(
\mathcal{A}^{\lambda_{n}}-k_{n}\right)  u_{n}=0$, we have
\begin{equation}
\left(  \mathcal{N}u_{n},u_{n}\right)  +\operatorname{Re}\left(
b\mathcal{\tilde{C}}^{\lambda_{n}}P_{ey}\left(  \xi\right)  \mathcal{\tilde
{C}}^{\lambda_{n}}u_{n},u_{n}\right)  =\operatorname{Re}k_{n}\left\Vert
u_{n}\right\Vert _{^{2}}^{2}\leq0. \label{inter2}%
\end{equation}
By Lemma \ref{lemma-operator-N},
\[
\left(  \mathcal{N}u_{n},u_{n}\right)  \geq\left(  1-\delta\right)  \frac
{1}{h}\left\Vert u_{n}\right\Vert _{L^{2}}^{2}+C_{0}\delta\left\Vert
u_{n}\right\Vert _{H^{\frac{1}{2}}}^{2}.
\]
Following the proof of Lemma \ref{lemma-quadrtic-bound-lb}, we write
\begin{align*}
&  \left(  b\mathcal{\tilde{C}}^{\lambda_{n}}P_{ey}\left(  \xi\right)
\mathcal{\tilde{C}}^{\lambda_{n}}u_{n},u_{n}\right) \\
&  =-g\left(  b\psi_{ey}\left(  1-\mathcal{\tilde{E}}^{\lambda_{n},+}\right)
\frac{1}{\psi_{ey^{2}}}u_{n},\left(  1-\mathcal{\tilde{E}}^{\lambda_{n}%
,-}\right)  \frac{1}{\psi_{ey}}u_{n}\right) \\
&  \ \ \ \ \ \ -g\left(  b\psi_{ey}\left[  \tilde{c},1-\mathcal{\tilde{E}%
}^{\lambda_{n},+}\right]  \frac{1}{\psi_{ey}}u_{n},\left(  1-\mathcal{\tilde
{E}}^{\lambda_{n},-}\right)  \frac{1}{\psi_{ey}}u_{n}\right) \\
\ \ \ \  &  \ \ \ \ \ \ \ +\left(  b\psi_{ey}\left(  1-\mathcal{\tilde{E}%
}^{\lambda_{n},+}\right)  \frac{\tilde{a}}{\psi_{ey}}u_{n},\left(
1-\mathcal{\tilde{E}}^{\lambda_{n},-}\right)  \frac{1}{\psi_{ey}}u_{n}\right)
\\
&  \ \ \ \ \ \ \ +\left(  b\psi_{ey}\left[  \tilde{a},1-\mathcal{\tilde{E}%
}^{\lambda_{n},+}\right]  \frac{1}{\psi_{ey}}u_{n},\left(  1-\mathcal{\tilde
{E}}^{\lambda_{n},-}\right)  \frac{1}{\psi_{ey}}u_{n}\right) \\
&  =T_{1}^{1}+T_{1}^{2}+T_{2}^{1}+T_{2}^{2}.
\end{align*}
The first term is estimated as
\begin{align*}
\left\vert T_{1}^{1}\right\vert  &  \leq g\left(  \int b\frac{1}{\psi_{ey^{3}%
}}\left\vert u_{n}\right\vert ^{2}dx\right)  ^{\frac{1}{2}}\left(  \int
b\frac{1}{\psi_{ey}}\left\vert u_{n}\right\vert ^{2}dx\right)  ^{\frac{1}{2}%
}\\
&  \leq g\left(  \frac{1}{c^{3}}\left\Vert u_{n}\right\Vert _{L^{2}}%
^{2}+C\left\Vert u_{n}\right\Vert _{L_{e}^{2}}^{2}\right)  ^{\frac{1}{2}%
}\left(  \frac{1}{c}\left\Vert u_{n}\right\Vert _{L^{2}}^{2}+C\left\Vert
u_{n}\right\Vert _{L_{e}^{2}}^{2}\right)  ^{\frac{1}{2}}\\
&  \leq g\left(  \frac{1}{c^{\frac{3}{2}}}\left\Vert u_{n}\right\Vert _{L^{2}%
}+C\left\Vert u_{n}\right\Vert _{L_{e}^{2}}\right)  \left(  \frac{1}%
{c^{\frac{1}{2}}}\left\Vert u_{n}\right\Vert _{L^{2}}+C\left\Vert
u_{n}\right\Vert _{L_{e}^{2}}\right) \\
&  \leq\frac{g}{c^{2}}\left\Vert u_{n}\right\Vert _{L^{2}}^{2}+C\left\Vert
u_{n}\right\Vert _{L^{2}}\left\Vert u_{n}\right\Vert _{L_{e}^{2}}+C\left\Vert
u_{n}\right\Vert _{L_{e}^{2}}^{2}\\
&  \leq\frac{g}{c^{2}}\left\Vert u_{n}\right\Vert _{L^{2}}^{2}+\varepsilon
\left\Vert u_{n}\right\Vert _{L^{2}}^{2}+C_{\varepsilon}\left\Vert
u_{n}\right\Vert _{L_{e}^{2}}^{2}.
\end{align*}
where in the second inequality, we use the fact that
\[
\left\vert b-1\right\vert ,\ \left\vert \frac{1}{\psi_{ey^{3}}}-\frac{1}%
{c^{3}}\right\vert ,\left\vert \frac{1}{\psi_{ey}}-\frac{1}{c}\right\vert \leq
Ce\left(  \xi\right)  .
\]
The second term is controlled by
\begin{align*}
\left\vert T_{1}^{2}\right\vert  &  \leq C\left(  \left\Vert \tilde{c}\left(
1-\mathcal{\tilde{E}}^{\lambda_{n},+}\right)  u_{n}\right\Vert _{L^{2}%
}+\left\Vert u_{n}\right\Vert _{L_{e}^{2}}\right)  \left\Vert u_{n}\right\Vert
_{L^{2}}\\
&  \leq C\left(  \varepsilon\left\Vert u_{n}\right\Vert _{H^{\frac{1}{2}}%
}+\left\Vert u_{n}\right\Vert _{L_{e}^{2}}\right)  \left\Vert u_{n}\right\Vert
_{L^{2}}\leq C\varepsilon\left\Vert u_{n}\right\Vert _{H^{\frac{1}{2}}}%
^{2}+C_{\varepsilon}\left\Vert u_{n}\right\Vert _{L_{e}^{2}}^{2},
\end{align*}
where in the second inequality we use Lemma \ref{lemma-estimates-E-t} (iv).
The third term is
\[
\left\vert T_{2}^{1}\right\vert \leq C\left\Vert u_{n}\right\Vert _{L_{e}^{2}%
}\left\Vert u_{n}\right\Vert _{L^{2}}\leq\varepsilon\left\Vert u_{n}%
\right\Vert _{L^{2}}^{2}+C_{\varepsilon}\left\Vert u_{n}\right\Vert
_{L_{e}^{2}}^{2}.
\]
By the same estimate as that of $T_{1}^{2}$, we have
\[
\left\vert T_{2}^{2}\right\vert \leq C\varepsilon\left\Vert u_{n}\right\Vert
_{H^{\frac{1}{2}}}^{2}+C_{\varepsilon}\left\Vert u_{n}\right\Vert _{L_{e}^{2}%
}^{2}.
\]
Plugging all of the above estimates into (\ref{inter2}), we have%
\begin{align*}
0  &  \geq\left[  \left(  1-\delta\right)  \frac{1}{h}-\frac{g}{c^{2}}\right]
\left\Vert u_{n}\right\Vert _{L^{2}}^{2}+\left(  C_{0}\delta-C\varepsilon
\right)  \left\Vert u_{n}\right\Vert _{H^{\frac{1}{2}}}^{2}-C_{\varepsilon
}\left\Vert u_{n}\right\Vert _{L_{e}^{2}}^{2}\\
&  \geq\frac{1}{2}\delta_{0}\left\Vert u_{n}\right\Vert _{L^{2}}^{2}+\frac
{1}{2}C_{0}\delta\left\Vert u_{n}\right\Vert _{H^{\frac{1}{2}}}^{2}%
-C_{\varepsilon}\left\Vert u_{n}\right\Vert _{L_{e}^{2}}^{2},
\end{align*}
by choosing $\delta,\ \varepsilon$ such that
\[
\delta=\frac{1}{2}h\delta_{0},\ \ \ \varepsilon=\frac{1}{2}\frac{C_{0}\delta
}{C}.
\]
Then (\ref{claim-bound-infy}) follows.
\end{proof}

\section{Asymptotic perturbations near zero}

In this Section, we study the eigenvalues of operator $\mathcal{A}^{\lambda}$
when $\lambda$ is very small. By Lemma \ref{lemma-C-tilta}, when
$\lambda\rightarrow0+,\ \mathcal{A}^{\lambda}\rightarrow\mathcal{A}^{0}$
strongly, where
\[
\mathcal{A}^{0}=\mathcal{N}+\frac{bP_{ey}}{\psi_{ey}^{2}}\left(  \xi\right)
.
\]
The related operator in the physical space is $\mathcal{A}_{e}^{0}%
:H^{1}\left(  \mathcal{S}_{e}\right)  \rightarrow L^{2}(\mathcal{S}_{e})$
defined by
\[
\mathcal{A}_{e}^{0}=\mathcal{N}_{e}\mathcal{\ }+\frac{P_{ey}}{\psi_{ey}^{2}%
}\left(  x\right)  =\mathcal{B}^{-1}\left(  \frac{1}{b}\mathcal{A}^{0}\right)
\mathcal{B},
\]
which is the strong limit of $\mathcal{A}_{e}^{\lambda}$ when $\lambda
\rightarrow0+$. We have the following properties of $\mathcal{A}^{0}$. We use
$\mathcal{A}^{0}\left(  \mu\right)  \ $to denote the dependence on the
solitary wave parameter $\mu.$

\begin{lemma}
\label{lemma-A0-property}

(i) The operator $\mathcal{A}^{0}:H^{1}\left(  \mathbf{R}\right)  \rightarrow
L^{2}(\mathbf{R})$ is self-adjoint and
\[
\sigma_{\text{ess}}\left(  \mathcal{A}^{0}\right)  =[\frac{1}{h}-\frac
{g}{c^{2}},+\infty).
\]

(ii) $\psi_{ex}\left(  \xi\right)  \in\ker\mathcal{A}^{0}$ and $\mathcal{A}%
^{0}$ has at least one negative eigenvalue that is simple.

(iii)Under the hypothesis (H1) of no secondary bifurcation, $\ker
\mathcal{A}^{0}\left(  \mu\right)  =$ $\left\{  \psi_{ex}\left(  \xi\right)
\right\}  $ when $\mu$ is not a turning point and $\ker\mathcal{A}^{0}\left(
\mu\right)  =$ $\left\{  \psi_{ex}\left(  \xi\right)  ,\frac{\partial\psi
_{e}\left(  \mu\right)  }{\partial\mu}\right\}  $ when $\mu$ is a turning
point. For any $\mu>\frac{\pi}{6}$, $\psi_{ex}\left(  \xi\right)  $ is the
only odd kernel of $\mathcal{A}^{0}\left(  \mu\right)  $.

(iv) When $\mu-\frac{6}{\pi}$ is small enough, $\mathcal{A}^{0}\left(
\mu\right)  $ has exactly one negative eigenvalue and $\ker\mathcal{A}%
^{0}\left(  \mu\right)  =$ $\left\{  \psi_{ex}\left(  \xi\right)  \right\}  $.
Under hypothesis (H1), the same is true for $\mathcal{A}^{0}\left(
\mu\right)  $ with $\mu\in\left(  \frac{6}{\pi},\mu_{1}\right)  $, where
$\mu_{1}$ is the first turning point.

(v) When $\mu\rightarrow\infty$, the number of negative eigenvalues of
$\mathcal{A}^{0}\left(  \mu\right)  $ increases without bound.
\end{lemma}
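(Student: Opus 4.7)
The operator $\mathcal{A}^{0}=\mathcal{N}+V(\xi)$ with $V=bP_{ey}/\psi_{ey}^{2}$ is a Schrödinger-type operator on the line, since $V$ is a bounded real multiplication operator and $\mathcal{N}$ is self-adjoint on $H^{1}(\mathbf{R})$. For part (i), self-adjointness is immediate from the Kato–Rellich theorem. For the essential spectrum, the crucial observation is that as $|\xi|\to\infty$ the mapping $B$ becomes asymptotically the identity, so $b\to 1$, $\psi_{ey}\to c$, $P_{ey}\to -g$, giving $V(\xi)\to -g/c^{2}$. Thus $V+g/c^{2}$ is a multiplication operator vanishing at infinity, which is $\mathcal{N}$-compact by a standard Rellich-type argument (compactly embedding $H^{1}$ locally and using the decay of the potential). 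Weyl's theorem then gives $\sigma_{\mathrm{ess}}(\mathcal{A}^{0})=\sigma_{\mathrm{ess}}(\mathcal{N}-g/c^{2})=[1/h-g/c^{2},\infty)=[\delta_{0},\infty)$, using the symbol $n(k)=k/\tanh(kh)$ which ranges over $[1/h,\infty)$.

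For part (ii), the kernel element $\psi_{ex}$ arises from translation invariance of the steady problem: differentiating the solitary wave system (\ref{stream}) in $x$, pulling the result back through the conformal mapping $G$, and comparing with the reduction that produced $\mathcal{A}^{\lambda}$ at $\lambda=0$ gives $\mathcal{A}_{e}^{0}(\psi_{ex}|_{\mathcal{S}_{e}})=0$, hence $\mathcal{A}^{0}(\mathcal{B}\psi_{ex})=0$ after passing through $\mathcal{B}$. Since $\sigma_{\mathrm{ess}}(\mathcal{A}^{0})\subset[\delta_{0},\infty)$ with $\delta_{0}>0$, any non-positive spectrum is discrete. To exhibit a negative eigenvalue one must produce a trial function $u$ with $(\mathcal{A}^{0}u,u)<0$; the natural candidate is the wave profile function $w$ itself (or a simple variant), whose quadratic form can be computed using the Bernoulli relation at the surface together with the supercritical inequality $F>1$. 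Simplicity of the lowest eigenvalue then follows from a Perron–Frobenius style argument: the Dirichlet–Neumann operator $\mathcal{N}$ generates a positivity-preserving semigroup (being subordinated to the Laplacian in a strip with Dirichlet bottom and Neumann top), so the ground state of $\mathcal{N}+V$ is strictly positive and therefore one-dimensional.

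Parts (iii)–(v) rest on the observation, already flagged in the introduction, that $\mathcal{A}^{0}$ coincides (after $\mathcal{B}$) with the operator Plotnikov used to study secondary bifurcation in \cite{plot91}. For (iii), differentiating the solitary wave family with respect to $\mu$ produces the identity $\mathcal{A}^{0}(\mathcal{B}\partial_{\mu}\psi_{e})=c'(\mu)\,g(\xi)$ for an explicit $g$; thus $\partial_{\mu}\psi_{e}$ is a kernel element precisely at turning points $c'(\mu)=0$. Hypothesis (H1) excludes \emph{secondary} bifurcation, which by Plotnikov's Fredholm analysis is exactly the statement that no additional kernel elements appear beyond $\psi_{ex}$ and (at turning points) $\partial_{\mu}\psi_{e}$. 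Uniqueness of the odd kernel direction comes from decomposing under the reflection symmetry $\xi\to -\xi$ (inherited from the symmetry of the solitary wave): $\psi_{ex}$ is odd while $\partial_{\mu}\psi_{e}$ is even, and a second odd kernel element would again signal a secondary bifurcation, contradicting (H1). For (iv), in the small-amplitude limit $\mu\searrow 6/\pi$ one performs an asymptotic expansion around the trivial flow ($F\searrow 1$), where the operator can be compared with its KdV-type linearization, which is known to have exactly one negative eigenvalue. Under (H1), eigenvalues of $\mathcal{A}^{0}(\mu)$ depend continuously on $\mu$ and can cross zero only where the kernel becomes nontrivial beyond $\psi_{ex}$; by (iii) this happens only at turning points, so the count is preserved on $(6/\pi,\mu_{1})$. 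For (v), one invokes (P4): infinitely many turning points accumulate as $\mu\to\infty$. At each such point a simple eigenvalue crosses zero, and the direction of the crossing (from positive to negative as $\mu$ increases through a turning point) is determined by the sign of $c''(\mu)\langle \partial_{\mu}\psi_{e},g\rangle$ via standard perturbation of isolated eigenvalues, giving unbounded growth of the Morse index.

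The principal obstacles are: (a) rigorously verifying the identification $\mathcal{A}^{0}\simeq$ Plotnikov's operator and importing the Fredholm/bifurcation-theoretic statements to our functional setting, which requires care with the weighted norms used here versus Hölder norms in \cite{plot91}; (b) the trial-function computation producing a strictly negative Rayleigh quotient in (ii), where the supercriticality $\delta_{0}>0$ must be used delicately because the competing positive term from $\mathcal{N}$ begins at $1/h$; and (c) controlling the sign of each eigenvalue crossing in (v), i.e., showing that turning points \emph{add} rather than remove negative eigenvalues—this requires a Melnikov-type transversality computation tied to the sign of $c''(\mu)$ at successive turning points, combined with Plotnikov's asymptotic description of these points near the highest wave.
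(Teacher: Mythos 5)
Your overall scaffolding is close to the paper's, but there are three places where the proposal diverges or leaves a genuine gap.

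For part (ii), you propose two separate steps: first, produce a trial function $u$ with $(\mathcal{A}^{0}u,u)<0$ (you suggest $w$ itself, via the Bernoulli relation), and second, establish simplicity via a Perron--Frobenius argument. The paper avoids the trial function entirely, and this is the key point you miss: once you know that $(\mathcal{N}+K)^{-1}$ is positivity preserving (the paper cites \cite{albert-et-87} for this, using that $\mathcal{N}-\tfrac{1}{h}$ has the same symbol as in the ILW equation), the ground state of $\mathcal{A}^{0}$ is simple with an eigenfunction of one sign. Since $\psi_{ex}(\xi)$ is odd and hence changes sign, $0$ cannot be the ground state eigenvalue, so the ground state eigenvalue is strictly negative. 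Existence and simplicity come together for free; the Rayleigh-quotient computation you propose is not carried out in the paper, is not needed, and is not obviously tractable (you flag it yourself as obstacle (b)).

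For part (iii), you take the identification of $\mathcal{A}^{0}$ with Plotnikov's operator $A(\lambda)$ as given, saying it was ``already flagged in the introduction.'' But this identification is a nontrivial claim that the paper actually proves: it writes $A(\lambda)=M^{*}A_{0}(\lambda)M$ with $M$ built from the conformal map, computes $M^{-1}w' = -\tfrac{1}{c}\psi_{ex}(\xi)$, and deduces $A(\lambda)\psi_{ex}(\xi)=0$; then, since both $A(\lambda)$ and $\mathcal{A}^{0}$ are $\mathcal{N}$ plus multiplication and both annihilate $\psi_{ex}$, the two multiplication operators must coincide. Without this step you cannot import Plotnikov's bifurcation and eigenvalue-counting results, so the remainder of (iii), (iv) and (v) has no foundation.

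For part (v), the paper does not attempt the eigenvalue-crossing analysis you sketch; it simply cites Plotnikov's Theorem 4.3, which directly asserts unbounded growth of the number of negative eigenvalues as $\mu\to\infty$. Your proposed ``Melnikov-type transversality computation tied to the sign of $c''(\mu)$'' is not only unnecessary, it is also questionable: turning points do not in general add a negative eigenvalue (the crossing direction can go either way), and Plotnikov's count is obtained by a different asymptotic argument near the extreme wave. Similarly, for the KdV limit in (iv), the paper records the precise three-eigenvalue picture (eigenvalues $\approx -\tfrac{15}{4}\rho^{2},\,0,\,\tfrac{9}{4}\rho^{2}$ coming from the $\operatorname{sech}^{2}$ operator with eigenvalues $\tfrac34,3,\tfrac{27}{4}$), which pins down exactly one negative and one zero eigenvalue; your phrase ``compared with its KdV-type linearization, which is known to have exactly one negative eigenvalue'' elides the fact that the relevant comparison operator has three bound states, and it is only the identification of the middle one with the translation kernel that forces the count.
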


\begin{proof}
(i) The essential spectrum bound follows from the observations that
$\sigma_{\text{ess}}\left(  \mathcal{N}\right)  =[\frac{1}{h},+\infty)$ and
$bP_{ey}/\psi_{ey}^{2}\rightarrow-\frac{g}{c^{2}}$ when $\left\vert
\xi\right\vert \rightarrow\infty$.

Proof of (ii): To show $\psi_{ex}\left(  \xi\right)  \in\ker\mathcal{A}%
^{0},\ $it is equivalent to show that
\[
\psi_{ex}\left(  x\right)  =\psi_{ex}\left(  x,\eta_{e}(x)\right)  \in
\ker\mathcal{A}_{e}^{0}.
\]
On $\mathcal{S}_{e},$ we have
\begin{equation}
\psi_{ex}\left(  x\right)  +\eta_{ex}\psi_{ey}\left(  x\right)  =0,\ P_{ex}%
\left(  x\right)  +\eta_{ex}P_{ey}\left(  x\right)  =0 \label{relation-1}%
\end{equation}
and
\begin{align}
P_{ex}\left(  x\right)   &  =-\left(  \psi_{ex}\psi_{exx}+\psi_{ey}\psi
_{eyx}\right)  =-\psi_{ey}\left(  \eta_{ex}\psi_{eyy}+\psi_{eyx}\right)
\label{relation-2-P}\\
&  =-\psi_{ey}\frac{d}{dx}\left(  \psi_{ey}\right)  =-\psi_{ey}\frac{d}%
{dx}\left(  \phi_{ex}\right)  .\nonumber
\end{align}
So
\[
\frac{P_{ey}}{\psi_{ey}^{2}}\psi_{ex}\left(  x\right)  =-\frac{P_{ey}}%
{\psi_{ey}}\eta_{ex}=\frac{P_{ex}\left(  x\right)  }{\psi_{ey}}=-\frac{d}%
{dx}\left(  \phi_{ex}\right)  =-\mathcal{N}_{e}\left(  \psi_{ex}\left(
x\right)  \right)  ,
\]
and thus $\mathcal{A}_{e}^{0}\psi_{ex}\left(  x\right)  =0$. Now we show that
$\mathcal{A}^{0}$ has a negative eigenvalue. We note that the Fourier
multiplier operator $\mathcal{N-}_{h}^{1}$ has the same symbol as in the
Intermediate Long Wave equation (IIW), for which it was shown in
\cite{albert-et-87} that for $K>0$ large, the operator $\left(  \mathcal{N+}%
K\right)  ^{-1}$ is positivity preserving. Thus, by the spectrum theory for
positivity preserving operators (\cite{albert-et-87}), the lowest eigenvalue
of $\mathcal{A}^{0}$ is simple with the corresponding eigenfunction of one
sign. Since $\psi_{ex}\left(  x\right)  $ is odd, $\psi_{ex}\left(
\xi\right)  $ has a zero at $\xi=0$. So $0$ is not the lowest eigenvalue of
$\mathcal{A}^{0}$ and $\mathcal{A}^{0}$ has at least one simple negative eigenvalue.

To prove (iii)-(iv), first we show that the operator $\mathcal{A}^{0}\left(
\mu\right)  $ is exactly the operator $A\left(  \lambda\right)  $ introduced
by Plotnikov (\cite[p. 349]{plot91}) in the study of the bifurcation of
solitary waves. In \cite{plot91}$,$ $h$ is set to $1$ and the parameter
$\lambda=\frac{1}{F\left(  \mu\right)  ^{2}}$ is the inverse square of the
Froude number, then the operator $A\left(  \lambda\right)  $ is defined by
$\ $%
\[
A\left(  \lambda\right)  =\mathcal{N-}a,\ \ \ \ a\left(  \xi\right)
=\lambda\exp\left(  3\tau\right)  \cos\theta+\theta^{\prime}\left(
\xi\right)
\]
In the above, $\exp\left(  \tau+i\theta\right)  =W$ where $W$ is defined by
(\ref{defn-W}), as can be seen from (\cite[(4.2), p. 348]{plot91}) with $u=w$.
To show that $\mathcal{A}^{0}\left(  \mu\right)  =A\left(  \lambda\left(
\mu\right)  \right)  $, it suffices to prove that
\begin{equation}
A\left(  \lambda\right)  \psi_{ex}\left(  \xi\right)  =0. \label{kernel-plot}%
\end{equation}
Since this implies that
\[
0=\left(  A\left(  \lambda\right)  -\mathcal{A}^{0}\left(  \mu\right)
\right)  \psi_{ex}\left(  \xi\right)  =\left(  -a-bP_{ey}/\psi_{ey}%
^{2}\right)  \psi_{ex}\left(  \xi\right)
\]
and thus $bP_{ey}/\psi_{ey}^{2}=-a$. We prove (\ref{kernel-plot}) below. In
\cite{plot91}, solitary waves are shown to be critical points of the
functional%
\begin{equation}
\mathcal{J}\left(  \lambda,w\right)  =\frac{1}{2}\int_{\mathbf{R}}\left\{
w\mathcal{N}w-\lambda w^{2}\left(  1+\mathcal{N}w\right)  \right\}  d\xi.
\label{variational}%
\end{equation}
Let the self-adjoint operator $A_{0}\left(  \lambda\right)  $ to be the second
derivative of $\mathcal{J}\left(  \lambda,w\right)  $ at a solitary wave
solution. In \cite[p. 349]{plot91}$,$ the operator $A\left(  \lambda\right)  $
is defined via
\[
A\left(  \lambda\right)  =M^{\ast}A_{0}\left(  \lambda\right)  M.
\]
Here, the operator $M:L^{2}\rightarrow L^{2}$ is defined by
\begin{equation}
Mf=f\left(  1+\mathcal{C}w^{\prime}\right)  +w^{\prime}\mathcal{C}%
f=\operatorname{Re}\left\{  W\mathcal{R}f\right\}  , \label{defn-f}%
\end{equation}
where $\mathcal{C}$ is defined in Section 2 such that $\mathcal{R}f$
$=f-i\mathcal{C}f$ \ is the boundary value on $\left\{  \varsigma=0\right\}  $
of an analytic function on $D_{0}$. Our definition (\ref{defn-f}) above adapts
the notations in \cite[p. 228]{btd-1}, which studies the bifurcation of Stokes
waves by using a similar variational setting as \cite{plot91}. Taking $d/d\xi$
of the equation $\nabla_{w}\mathcal{J}\left(  \lambda,w\right)  =0$ for a
solitary wave solution $w$, we have $A_{0}\left(  \lambda\right)  w^{\prime
}=0$. Since
\begin{align*}
M^{-1}w^{\prime}  &  =\operatorname{Re}\left\{  \frac{\mathcal{R}w^{\prime}%
}{W}\right\}  =\operatorname{Re}\left\{  \frac{w^{\prime}-i\mathcal{C}%
w^{\prime}}{1+\mathcal{C}w^{\prime}+iw^{\prime}}\right\} \\
&  =\operatorname{Re}\left\{  \frac{\left(  w^{\prime}-i\mathcal{C}w^{\prime
}\right)  \left(  1+\mathcal{C}w^{\prime}-iw^{\prime}\right)  }{\left\vert
W\right\vert ^{2}}\right\} \\
&  =\frac{w^{\prime}}{\left\vert W\right\vert ^{2}}=\frac{1}{c}v_{e}=-\frac
{1}{c}\psi_{ex}\left(  \xi\right)  ,
\end{align*}
we have $M\psi_{ex}\left(  \xi\right)  =-cw^{\prime}$ and thus
\[
A\left(  \lambda\right)  \psi_{ex}\left(  \xi\right)  =-cM^{\ast}A_{0}\left(
\lambda\right)  w^{\prime}=0\text{. }%
\]
This finishes the proof that $\mathcal{A}^{0}\left(  \mu\right)  =A\left(
\lambda\right)  $.

Proof of (iii): By applying the analytic bifurcation theory in \cite{btd-1},
\cite{btd-2} to the variational setting (\ref{variational}) for the solitary
waves, one can relate the secondary bifurcation of solitary waves with the
null space of $\mathcal{A}^{0}$ (equivalently $\nabla_{ww}^{2}\mathcal{J}$%
)$.$Under the hypothesis (H1), there is no secondary bifurcation and therefore
the kernel of $\mathcal{A}^{0}$ is either due to the trivial translation
symmetry ($\psi_{ex}$) or due to the loss of monotonicity of $\lambda\left(
\mu\right)  $ at a turning point which generates an additional kernel
$\partial_{\mu}\psi_{e}$. In the Appendix, we prove that at a turning point
$\mu_{0}$, $\mathcal{A}^{0}\partial_{\mu}\psi_{e}=0$. By \cite{craige88} there
is no asymmetric bifurcation for solitary waves with $F>1$, so $\psi
_{ex}\left(  \xi\right)  $ is the only odd kernel of $\mathcal{A}^{0}\left(
\mu\right)  $.

Proof of (iv): Let $\lambda\left(  \rho\right)  =\exp\left(  -3\rho
^{2}\right)  $, then $\mu\approx\frac{6}{\pi}$ is equivalent to $\lambda
\approx1$ and thus $\rho$ is a small parameter. Consider an eigenvalue $\nu$
of $\mathcal{A}^{0}\left(  \mu\left(  \lambda\right)  \right)  $, let
$\nu=\rho^{2}\left(  3-\alpha\left(  \rho\right)  \right)  $. By using the KDV
scaling, it was shown in \cite[p. 353]{plot91} that when $\rho\rightarrow0$,
the limit $\alpha\left(  0\right)  $ is an eigenvalue of the operator
\[
B=\frac{1}{3}\frac{d^{2}}{dx^{2}}+9\operatorname{sech}^{2}\left(  \frac{3}%
{2}x\right)
\]
which has three eigenvalues $\frac{3}{4},3$ and $\frac{27}{4}$. Therefore,
when $\rho$ is small, $\mathcal{A}^{0}$ has three eigenvalues $-\frac{15}%
{4}\rho^{2}+o\left(  \rho^{2}\right)  ,\ o\left(  \rho^{2}\right)  $ and
$\frac{9}{4}\rho^{2}+o\left(  \rho^{2}\right)  $. Since $0$ is an eigenvalue
of $\mathcal{A}^{0}$, the middle one must be zero and the rest two eigenvalues
are one positive and one negative. Under the hypothesis (H1), when $\mu
<\mu_{1}$, that is, before the first turning point, we have $\ker
\mathcal{A}^{0}\left(  \mu\right)  =$ $\left\{  \psi_{ex}\left(  \xi\right)
\right\}  $. Then for all $\mu\in\left(  \frac{6}{\pi},\mu_{1}\right)  ,$ the
operator $\mathcal{A}^{0}\left(  \mu\right)  $ always has only one negative
eigenvalue. Suppose otherwise, then when $\mu$ increases from $\frac{6}{\pi}$
to $\mu_{1}$, the eigenvalues of $\mathcal{A}^{0}\left(  \mu\right)  $ must go
across zero at some $\mu=\mu^{\ast}\in$ $\left(  \frac{6}{\pi},\mu_{1}\right)
.$This implies that $dim\ker\mathcal{A}^{0}\left(  \mu^{\ast}\right)  \geq2$,
a contradiction to (H1).

Property (v) is Theorem 4.3 in \cite{plot91}$.$
\end{proof}

We note that by Lemma \ref{lemma-A0-property} (iv), there is no secondary
bifurcation for small solitary waves. Although this fact was not stated
explicitly in \cite{plot91}, it comes as a corollary of results there.

Next, we study the eigenvalues of $\mathcal{A}^{\lambda}$ for small $\lambda$.
Since the convergence of $\mathcal{A}^{\lambda}\rightarrow\mathcal{A}^{0}$ is
rather weak, we cannot use the regular perturbation theory. We use the
asymptotic perturbation theory developed by Vock and Hunziker
(\cite{vock-hunz82}), see also \cite{hislop-sig-book}, \cite{hunz-notes}.
First, we establish some preliminary lemmas.

\begin{lemma}
Given $F\in C_{0}^{\infty}\left(  \mathbf{R}\right)  $. Consider any sequence
$\lambda_{n}\rightarrow0+$ and $\left\{  u_{n}\right\}  \in H^{1}\left(
\mathbf{R}\right)  $ satisfying
\begin{equation}
\left\Vert \mathcal{A}^{\lambda_{n}}u_{n}\right\Vert _{2}+\left\Vert
u_{n}\right\Vert _{2}\leq M_{1}<\infty\label{bound-graph-0}%
\end{equation}
for some constant $M_{1}$. Then if $w-\lim_{n\rightarrow\infty}u_{n}=0$, we
have
\begin{equation}
\lim_{n\rightarrow\infty}\left\Vert Fu_{n}\right\Vert _{2}=0 \label{lim1}%
\end{equation}
and
\begin{equation}
\lim_{n\rightarrow\infty}\left\Vert \left[  \mathcal{A}^{\lambda_{n}%
},F\right]  u_{n}\right\Vert _{2}=0. \label{lim2}%
\end{equation}

\end{lemma}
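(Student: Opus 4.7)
The plan is to first upgrade the graph-norm bound (\ref{bound-graph-0}) to a uniform $H^1$ bound, then obtain (\ref{lim1}) by Rellich, and finally handle (\ref{lim2}) by splitting $[\mathcal{A}^{\lambda_n},F]$ into its $\mathcal{N}$-piece and its $\mathcal{K}^{\lambda_n}$-piece. Since the symbol $n(k)=k/\tanh(kh)$ of $\mathcal{N}$ satisfies $n(k)\geq c(1+|k|)$ pointwise, and $\mathcal{K}^{\lambda_n}=b\tilde{\mathcal{C}}^{\lambda_n}P_{ey}\tilde{\mathcal{C}}^{\lambda_n}$ has $L^2\to L^2$ norm bounded uniformly in $\lambda_n$ by Lemma \ref{lemma-C-tilta}, the hypothesis gives
\[
\|u_n\|_{H^1}\leq C\|\mathcal{N}u_n\|_2 \leq C\bigl(\|\mathcal{A}^{\lambda_n}u_n\|_2+\|\mathcal{K}^{\lambda_n}u_n\|_2\bigr)\leq CM_1.
\]
Combined with $w\text{-}\lim u_n=0$ in $L^2$, this forces $u_n\rightharpoonup 0$ in $H^1$, and Rellich--Kondrachov then yields $u_n\to 0$ in $L^2_{\mathrm{loc}}$. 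Because $F$ is compactly supported this is exactly (\ref{lim1}).

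For (\ref{lim2}) I would split $[\mathcal{A}^{\lambda_n},F]=[\mathcal{N},F]+[\mathcal{K}^{\lambda_n},F]$. For the first, factor $\mathcal{N}=\mathcal{N}_2\mathcal{N}_1$ as in the proof of Lemma \ref{lemma-commu-d}, so that
\[
[\mathcal{N},F]u_n=\mathcal{N}_2(F'u_n)+[\mathcal{N}_2,F]\mathcal{N}_1 u_n.
\]
The first summand tends to $0$ in $L^2$ by applying (\ref{lim1}) to $F'$ in place of $F$ and using $L^2$-boundedness of $\mathcal{N}_2$. For the second, the kernel analysis already performed in the proof of Lemma \ref{lemma-commu-d} shows that $[\mathcal{N}_2,F]$ has Hilbert--Schmidt kernel and is hence compact on $L^2$; since $\mathcal{N}_1 u_n$ is $L^2$-bounded and weakly null (inherited from $u_n\rightharpoonup 0$ in $H^1$), the compact operator sends it to $0$ strongly.

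For the $\mathcal{K}^{\lambda_n}$-piece, the fact that $b$, $P_{ey}$ and $F$ all commute as multiplications gives
\[
[\mathcal{K}^{\lambda_n},F] = b\,[\tilde{\mathcal{C}}^{\lambda_n},F]\,P_{ey}\tilde{\mathcal{C}}^{\lambda_n} + b\tilde{\mathcal{C}}^{\lambda_n}P_{ey}\,[\tilde{\mathcal{C}}^{\lambda_n},F],
\]
and using $\tilde{\mathcal{C}}^{\lambda}=(1-\tilde{\mathcal{E}}^{\lambda,+})\psi_{ey}^{-1}$ the problem reduces to the auxiliary claim that whenever $\{w_n\}$ is bounded in $H^{1/2}$ with $w_n\rightharpoonup 0$ in $L^2$, one has $\|[\tilde{\mathcal{E}}^{\lambda_n,+},F]w_n\|_2\to 0$. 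The splitting $[\tilde{\mathcal{E}}^{\lambda_n,+},F]w_n=\tilde{\mathcal{E}}^{\lambda_n,+}(Fw_n)-F\tilde{\mathcal{E}}^{\lambda_n,+}w_n$ handles this: the first term vanishes because $Fw_n\to 0$ in $L^2$ (Rellich) and $\tilde{\mathcal{E}}^{\lambda_n,+}$ is uniformly $L^2$-bounded by Lemma \ref{lemma-property-E-t-lb}(a), while the second vanishes by Lemma \ref{lemma-estimates-E-t}(iii). I would apply this claim with $w_n=u_n/\psi_{ey}$ for the second summand of the expansion and with $w_n=P_{ey}\tilde{\mathcal{C}}^{\lambda_n}u_n/\psi_{ey}$ for the first, checking for the latter that it is $H^{1/2}$-bounded (via Lemma \ref{lemma-estimates-E-t}(i) and the smooth bounded multipliers) and weakly null in $L^2$ (by a duality argument against the strong vanishing in Lemma \ref{lemma-property-E-t-lb}(b)).

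The main obstacle is that $\tilde{\mathcal{C}}^{\lambda_n}\to \psi_{ey}^{-1}$ only in the strong operator topology as $\lambda_n\to 0+$, so the standard pseudodifferential commutator calculus does not apply and the commutator $[\tilde{\mathcal{C}}^{\lambda_n},F]$ is not in any useful sense norm-small. The substitute is Lemma \ref{lemma-estimates-E-t}(iii), which exchanges the spatial localization of $F$ against the vanishing of $\tilde{\mathcal{E}}^{\lambda,+}$ on $H^{1/2}$-bounded sequences; this plays the role that compactness of $[\tilde{\mathcal{C}}^{\lambda_n},F]$ would play in a norm-convergent setting, and its availability is precisely why the graph-norm bound (which yields $H^{1/2}$-control via the $H^1$ upgrade above) is the right hypothesis.
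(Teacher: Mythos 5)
Your argument is correct and follows essentially the same route as the paper: the $H^1$ bound from the graph norm, Rellich for \eqref{lim1}, the $\mathcal{N}=\mathcal{N}_2\mathcal{N}_1$ factorization with compactness of $[\mathcal{N}_2,F]$ for the local piece, and Lemma \ref{lemma-estimates-E-t}(iii) together with local compactness for the $[\mathcal{K}^{\lambda_n},F]$ piece. Your auxiliary claim about $[\tilde{\mathcal{E}}^{\lambda_n,+},F]$ on $H^{1/2}$-bounded weakly null sequences is just a clean repackaging of the paper's $p_n,q_n$ estimates, and the duality verification of weak nullity of $\frac{P_{ey}}{\psi_{ey}}\tilde{\mathcal{C}}^{\lambda_n}u_n$ matches the reference the paper makes to the proof of Lemma \ref{lemma-estimates-E-t}(iii).
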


\begin{proof}
Since (\ref{bound-graph-0}) implies that $\left\Vert u_{n}\right\Vert _{H^{1}%
}\leq C$, (\ref{lim1}) follows from the local compactness of $H^{1}%
\hookrightarrow L^{2}$. For the proof of (\ref{lim2}), we use the same
notations as in the proof of Lemma \ref{lemma-commu-d}. We write
$\mathcal{A}^{\lambda_{n}}=\mathcal{N+K}^{\lambda_{n}}$. Then
\[
\left[  \mathcal{N},F\right]  =\left[  \mathcal{N}_{2}\mathcal{N}%
_{1},F\right]  =\mathcal{N}_{2}\left[  \mathcal{N}_{1},F\right]  +\left[
\mathcal{N}_{2},F\right]  \mathcal{N}_{1},
\]
where $\mathcal{N}_{1}=1+\frac{d}{d\xi}$ and $\mathcal{N}_{2}$ has the symbol
$n_{2}\left(  k\right)  \ $defined by (\ref{defn-n2}). We have $\left\Vert
\left[  \mathcal{N}_{1},F\right]  u_{n}\right\Vert _{2\text{ }}=\left\Vert
F^{\prime}u_{n}\right\Vert _{2}\rightarrow0$, again by the local compactness.
Since $\frac{d}{dk}n_{2}\left(  k\right)  \rightarrow0$ when $\left\vert
k\right\vert \rightarrow\infty$, by \cite[Theorem C]{cordes} the commutator
$\left[  \mathcal{N}_{2},F\right]  :L^{2}\rightarrow L^{2}$ is compact. This
can also be seen from the proof of Lemma \ref{lemma-commu-d}, since $\left[
\mathcal{N}_{2},F\right]  =\int_{0}^{1}A_{\rho}\ d\rho$ where $A_{\rho}$ is an
integral operator with a $L^{2}\left(  \mathbf{R\times R}\right)  $ kernel$.$
Since $\left\Vert \mathcal{N}_{1}u_{n}\right\Vert _{2}\leq\left\Vert
u_{n}\right\Vert _{H^{1}}\leq C$ and $u_{n}\rightarrow0$ weakly in $L^{2}$, we
have $v_{n}=\mathcal{N}_{1}u_{n}\rightarrow0$ weakly in $L^{2}$. Therefore,
$\left\Vert \left[  \mathcal{N}_{2},F\right]  \mathcal{N}_{1}u_{n}\right\Vert
_{2}=\left\Vert \left[  \mathcal{N}_{2},F\right]  v_{n}\right\Vert
_{2}\rightarrow0$. Thus, $\left\Vert \left[  \mathcal{N},F\right]
u_{n}\right\Vert _{2\text{ }}\rightarrow0$. Since
\begin{align*}
\left[  \mathcal{K}^{\lambda_{n}},F\right]  u_{n}  &  =\left[  b\left(
1-\mathcal{\tilde{E}}^{\lambda_{n},+}\right)  \frac{P_{ey}}{\psi_{ey}}\left(
1-\mathcal{\tilde{E}}^{\lambda_{n},+}\right)  \frac{1}{\psi_{ey}},F\right] \\
&  =b\left[  F,\mathcal{\tilde{E}}^{\lambda_{n},+}\right]  \frac{P_{ey}}%
{\psi_{ey}}\left(  1-\mathcal{\tilde{E}}^{\lambda_{n},+}\right)  \frac{1}%
{\psi_{ey}}u_{n}+b\left(  1-\mathcal{\tilde{E}}^{\lambda_{n},+}\right)
\frac{P_{ey}}{\psi_{ey}}\left[  F,\mathcal{\tilde{E}}^{\lambda_{n},+}\right]
u_{n}\\
&  =p_{n}+q_{n}.
\end{align*}
For any $\varepsilon>0$, by Lemma \ref{lemma-estimates-E-t} (iii), when $n$ is
large we have
\[
\left\Vert F\mathcal{\tilde{E}}^{\lambda_{n},+}u_{n}\right\Vert _{L^{2}}%
\leq\varepsilon\left\Vert u_{n}\right\Vert _{H^{\frac{1}{2}}}\leq
\varepsilon\left\Vert u_{n}\right\Vert _{H^{1}}\leq\varepsilon M_{1}.
\]
So
\[
\left\Vert q_{n}\right\Vert _{2}\leq C\left(  \left\Vert F\mathcal{\tilde{E}%
}^{\lambda_{n},+}u_{n}\right\Vert _{2}+\left\Vert Fu_{n}\right\Vert
_{2}\right)  \leq\varepsilon CM_{1}+C\left\Vert Fu_{n}\right\Vert _{2}%
\leq2\varepsilon CM_{1},
\]
when $n$ is large. By the same proof as that of (\ref{estimate-e-t-lb}), for
any $\lambda>0,$ we have the estimate
\[
\left\Vert \mathcal{\tilde{E}}^{\lambda,+}\right\Vert _{H^{1}\rightarrow
H^{1}}\leq C\text{ (independent of }\lambda\text{). }%
\]
Denote
\[
r_{n}=\frac{P_{ey}}{\psi_{ey}}\left(  1-\mathcal{\tilde{E}}^{\lambda_{n}%
,+}\right)  \frac{1}{\psi_{ey}}u_{n},
\]
then $\left\Vert r_{n}\right\Vert _{H^{1}}\leq C\left\Vert u_{n}\right\Vert
_{H^{1}}\leq CM_{1}$. Since $w-\lim_{n\rightarrow\infty}u_{n}=0$, we have
$w-\lim_{n\rightarrow\infty}r_{n}=0$ as in the proof of Lemma
\ref{lemma-estimates-E-t} (iii). Then similar to the estimate of $\left\Vert
q_{n}\right\Vert _{2}$, we have
\[
\left\Vert p_{n}\right\Vert _{2}\leq C\left(  \left\Vert F\mathcal{\tilde{E}%
}^{\lambda_{n},+}r_{n}\right\Vert _{2}+\left\Vert Fr_{n}\right\Vert
_{2}\right)  \leq2\varepsilon CM_{1},
\]
when $n$ is large. Therefore, $\left\Vert \left[  \mathcal{K}^{\lambda_{n}%
},F\right]  u_{n}\right\Vert _{2}\leq4\varepsilon CM_{1}$ when $n$ is large
enough. Since $\varepsilon$ is arbitrary, we have $\left\Vert \left[
\mathcal{K}^{\lambda_{n}},F\right]  u_{n}\right\Vert _{2}\rightarrow0,\ $when
$n\rightarrow\infty$. This finishes the proof of (\ref{lim2}).
\end{proof}

\begin{lemma}
Let $z\in\mathbb{C}$ with $\operatorname{Re}z\leq\frac{1}{2}\delta_{0}$, then
for some $n>0$ and all $u\in C_{0}^{\infty}\left(  \left\vert x\right\vert
\geq n\right)  $, we have
\begin{equation}
\left\Vert \left(  \mathcal{A}^{\lambda}-z\right)  u\right\Vert _{2}\geq
\frac{1}{4}\delta_{0}\left\Vert u\right\Vert _{2}, \label{resolvent-bound}%
\end{equation}
when $\lambda$ is sufficiently small. Here $\delta_{0}>0\ $is defined by
(\ref{defn-d-0}).
\end{lemma}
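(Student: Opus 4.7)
The plan is to derive the operator inequality from a quadratic form bound, essentially re-running the argument of Lemma \ref{lemma-quadrtic-bound-lb} but this time tracking the dependence on $\lambda$ for small $\lambda$ (rather than keeping $\lambda$ fixed and letting the support tend to infinity). Once I have the real part estimate $\operatorname{Re}\,((\mathcal{A}^\lambda - z)u, u) \geq \frac{1}{4}\delta_0 \|u\|_2^2$, the conclusion follows immediately from Cauchy--Schwarz:
\[
\|(\mathcal{A}^\lambda - z) u\|_2 \, \|u\|_2 \geq \operatorname{Re}\,((\mathcal{A}^\lambda - z)u, u) \geq \tfrac14 \delta_0 \|u\|_2^2 .
\]

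For the quadratic form bound I would split $\operatorname{Re}\,((\mathcal{A}^\lambda - z)u, u)$ into the three pieces appearing in (\ref{re-A-lb-qua}), apply Lemma \ref{lemma-operator-N} to the Dirichlet--Neumann piece as in (\ref{estimates-N}), and decompose the remaining piece $\operatorname{Re}\,(b \mathcal{\tilde{C}}^\lambda P_{ey} \mathcal{\tilde{C}}^\lambda u, u)$ using $P_{ey} = -g + \psi_{ey} \tilde{a}$ and $\mathcal{\tilde{C}}^\lambda = (1 - \mathcal{\tilde{E}}^{\lambda,+})/\psi_{ey}$ into exactly the four terms $T_1^1, T_1^2, T_2^1, T_2^2$ of Lemma \ref{lemma-quadrtic-bound-lb}. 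The ``diagonal'' terms $T_1^1$ and $T_2^1$, which carry no commutators, are controlled using the boundedness of $1 - \mathcal{\tilde{E}}^{\lambda,\pm}$ on $L^2_{b \psi_{ey}}$ (Lemma \ref{lemma-property-E-t-lb}) together with the support condition $\operatorname{supp} u \subset \{|\xi| \geq n\}$: since $\tilde{a}, \tilde{b}, \tilde{c}$ decay at infinity, these terms contribute $\frac{g}{c^2} \|u\|_2^2 + O(1/n) \|u\|_2^2$.

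The essential new point, and the main obstacle, is handling the commutator terms $T_1^2, T_2^2$ uniformly for small $\lambda$. In the proof of Lemma \ref{lemma-quadrtic-bound-lb} these were estimated using Lemma \ref{lemma-estimates-E-t}(ii), whose constant $C_\varepsilon$ depends on $\lambda$; here I cannot afford that dependence. Instead I would apply Lemma \ref{lemma-estimates-E-t}(iii), which for any preassigned $\varepsilon > 0$ gives $\|\tilde{c}\, \mathcal{\tilde{E}}^{\lambda,+} u\|_{L^2} \leq \varepsilon \|u\|_{H^{1/2}}$ and $\|\tilde{a}\, \mathcal{\tilde{E}}^{\lambda,+} u\|_{L^2} \leq \varepsilon \|u\|_{H^{1/2}}$ uniformly once $\lambda$ is small enough. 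Combined with Lemma \ref{lemma-property-E-t-lb} and the smallness of $\tilde{a}, \tilde{c}$ on $\{|\xi| \geq n\}$, this yields $|T_1^2| + |T_2^2| \leq C \varepsilon \|u\|_{H^{1/2}}^2 + O(1/n)\|u\|_2^2$.

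Collecting everything, I obtain
\[
\operatorname{Re}\,((\mathcal{A}^\lambda - z)u, u) \geq \left(\tfrac{1}{h} - \tfrac{g}{c^2} - \operatorname{Re} z - \tfrac{\delta}{h} - \tfrac{C'}{n}\right) \|u\|_2^2 + (C_0 \delta - C \varepsilon) \|u\|_{H^{1/2}}^2 .
\]
Using $\operatorname{Re} z \leq \frac{1}{2}\delta_0$ and the definition $\delta_0 = \frac{1}{h} - \frac{g}{c^2}$, the $\|u\|_2^2$ coefficient is at least $\frac{1}{2}\delta_0 - \frac{\delta}{h} - \frac{C'}{n}$. Choosing first $\delta \leq \frac{1}{8}\delta_0 h$, then $\varepsilon \leq C_0 \delta / C$, then $n$ large enough that $C'/n \leq \frac{1}{8}\delta_0$, and finally requiring $\lambda$ small enough that Lemma \ref{lemma-estimates-E-t}(iii) applies with this $\varepsilon$, one arrives at $\operatorname{Re}\,((\mathcal{A}^\lambda - z)u, u) \geq \frac{1}{4}\delta_0 \|u\|_2^2$, which combined with Cauchy--Schwarz gives (\ref{resolvent-bound}).
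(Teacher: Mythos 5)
Your proposal is correct and takes essentially the same approach as the paper: the paper derives the operator bound from the quadratic form estimate $\operatorname{Re}((\mathcal{A}^{\lambda}-z)u,u)\geq\frac{1}{4}\delta_{0}\|u\|_{2}^{2}$, noting that the argument is "almost the same as that of Lemma \ref{lemma-quadrtic-bound-lb}, except that Lemma \ref{lemma-estimates-E-t}(iii) is used in the estimates." You have correctly identified both the role of Cauchy--Schwarz and the key substitution of part (iii) for part (ii) to obtain uniformity for small $\lambda$, and you have filled in the detailed term-by-term estimates that the paper merely refers to by analogy.
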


\begin{proof}
The estimate (\ref{resolvent-bound}) follows from
\begin{equation}
\operatorname{Re}\left(  \left(  \mathcal{A}^{\lambda}-z\right)  u,u\right)
\geq\frac{1}{4}\delta_{0}\left\Vert u\right\Vert _{2}^{2}.
\label{quadratic-bound-zero}%
\end{equation}
The proof of (\ref{quadratic-bound-zero}) is almost the same as that of Lemma
\ref{lemma-quadrtic-bound-lb}, except that Lemma \ref{lemma-estimates-E-t}
(iii) is used in the estimates. So we skip it.
\end{proof}

With above two lemmas, we can use the asymptotic perturbation theory
(\cite{hislop-sig-book}, \cite{hunz-notes}) to get the following result on
eigenvalue perturbations of $\mathcal{A}^{\lambda}$.

\begin{proposition}
\label{Prop-asym-perturb}Each discrete eigenvalue $k_{0}$ of $\mathcal{A}^{0}$
with $k_{0}\leq\frac{1}{2}\delta_{0}$ is stable with respect to the family
$\mathcal{A}^{\lambda}$ in the following sense: there exists $\lambda
_{1},\delta>0$, such that for $0<\lambda<\lambda_{1}$, we have

(i)
\[
B\left(  k_{0};\delta\right)  =\left\{  z|\ 0<\left\vert z-k_{0}\right\vert
<\delta\right\}  \subset P\left(  \mathcal{A}^{\lambda}\right)  ,
\]
where
\[
P\left(  \mathcal{A}^{\lambda}\right)  =\left\{  z|\ R^{\lambda}\left(
z\right)  =\left(  \mathcal{A}^{\lambda}-z\right)  ^{-1}\text{ exists and is
uniformly bounded for }\lambda\in\left(  0,\lambda_{1}\right)  \right\}  .
\]

(ii) Denote
\[
P_{\lambda}=\oint_{\left\{  \left\vert z-k_{0}\right\vert =\delta\right\}
}R^{\lambda}\left(  z\right)  \ dz\text{ and \ }P_{0}=\oint_{\left\{
\left\vert z-k_{0}\right\vert =\delta\right\}  }R^{0}\left(  z\right)  \ dz
\]
to be the perturbed and unperturbed spectral projection. Then $\dim
P_{\lambda}=\dim P_{0}$ and $\lim_{\lambda\rightarrow0}\left\Vert P_{\lambda
}-P_{0}\right\Vert =0.$
\end{proposition}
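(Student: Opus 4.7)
The plan is to invoke the asymptotic perturbation theorem of Vock--Hunziker (\cite{vock-hunz82}, see also \cite{hislop-sig-book}, \cite{hunz-notes}) directly. The two preceding lemmas are designed precisely to supply its hypotheses: the commutator/local-compactness estimates \eqref{lim1}--\eqref{lim2} together with the strong convergence $\mathcal{A}^{\lambda} \to \mathcal{A}^{0}$ on $H^{1}$ (from Lemma \ref{lemma-C-tilta}) play the role of the ``compact perturbation'' input, while the quadratic lower bound \eqref{quadratic-bound-zero} provides the uniform separation from the essential spectrum in a neighborhood of $k_{0}$. Since $k_{0} < \frac{1}{2}\delta_{0}$ is a discrete eigenvalue of the self-adjoint operator $\mathcal{A}^{0}$, I choose $\delta > 0$ small enough that $\overline{B(k_{0};\delta)}$ contains no other spectrum of $\mathcal{A}^{0}$ and lies in $\{\operatorname{Re} z < \frac{1}{2}\delta_{0}\}$.

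The heart of the argument is to establish uniform boundedness of $R^{\lambda}(z) = (\mathcal{A}^{\lambda}-z)^{-1}$ on the annulus $\{0 < |z-k_{0}| \leq \delta\}$ for all $\lambda$ small by contradiction. Suppose not: pick $\lambda_{n}\to 0+$, $z_{n}\to z_{\infty}$ with $0<|z_{\infty}-k_{0}|\leq \delta$, and $u_{n}\in H^{1}$ with $\|u_{n}\|_{2}=1$ and $(\mathcal{A}^{\lambda_{n}}-z_{n})u_{n} \to 0$ in $L^{2}$. Because $\mathcal{N}$ dominates $\mathcal{K}^{\lambda_{n}}$ in graph norm, $\{u_{n}\}$ is bounded in $H^{1}$, so after extraction $u_{n}\rightharpoonup u_{\infty}$ weakly in $H^{1}$. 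Pick $F\in C_{0}^{\infty}(\mathbf{R})$ with $F\equiv 1$ on $\{|\xi|\leq n_{0}\}$, where $n_{0}$ is large enough for the tail estimate \eqref{resolvent-bound} to yield $\|(\mathcal{A}^{\lambda_{n}}-z_{n})(1-F)u_{n}\|_{2}\geq \frac{1}{4}\delta_{0}\|(1-F)u_{n}\|_{2}$. Commuting $F$ past the operator via \eqref{lim2} and using $\|(\mathcal{A}^{\lambda_n}-z_n)u_n\|_2 \to 0$, one sees that both $(1-F)u_{n}$ and $[\mathcal{A}^{\lambda_{n}},F]u_{n}$ are controlled, forcing $Fu_{n}$ to satisfy $(\mathcal{A}^{\lambda_{n}}-z_{n})(Fu_{n}) \to 0$ in $L^{2}$ up to a controllable error. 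Passing to the limit using strong convergence of $\mathcal{A}^{\lambda_{n}}$ on compactly supported functions (Lemma \ref{lemma-C-tilta}) and local compactness of $H^{1}\hookrightarrow L^{2}$, I conclude $\mathcal{A}^{0}u_{\infty} = z_{\infty}u_{\infty}$. If $u_{\infty}\neq 0$, then $z_{\infty}$ is a discrete eigenvalue of $\mathcal{A}^{0}$ in $\overline{B(k_{0};\delta)}\setminus\{k_{0}\}$, contradicting the choice of $\delta$. If $u_{\infty}=0$, then \eqref{lim1} gives $\|Fu_{n}\|_{2}\to 0$, and combined with the tail bound, this forces $\|u_{n}\|_{2}\to 0$, again a contradiction.

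With uniform boundedness of $R^{\lambda}(z)$ on the circle $\Gamma=\{|z-k_{0}|=\delta\}$ secured, part~(ii) follows by the standard Riesz projection argument. The second resolvent identity
\[
R^{\lambda}(z) - R^{0}(z) = R^{\lambda}(z)(\mathcal{A}^{0}-\mathcal{A}^{\lambda})R^{0}(z),
\]
combined with the uniform bound on $\Gamma$ and strong convergence of $\mathcal{A}^{\lambda}\to \mathcal{A}^{0}$ applied to elements in the (finite-dimensional) range of $\oint_{\Gamma} R^{0}(z)\,dz$, yields $\|P_{\lambda}-P_{0}\|\to 0$. Once $\|P_{\lambda}-P_{0}\|<1$, a classical argument (see \cite{hislop-sig-book}) gives $\dim P_{\lambda} = \dim P_{0}$.

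The main obstacle is that the convergence $\mathcal{A}^{\lambda}\to \mathcal{A}^{0}$ is only strong on $H^{1}$ and fails to be norm-resolvent convergent, so Kato--Rellich perturbation theory does not apply. The compactness needed to pass to weak limits must come from two independent sources: local compactness in $\xi$, encoded in \eqref{lim1}--\eqref{lim2}, and the separation from the essential spectrum encoded in \eqref{resolvent-bound}, which in turn relies crucially on the supercriticality $F>1$ (equivalently $\delta_{0}>0$). Verifying rigorously that the weak limit $u_{\infty}$ solves $\mathcal{A}^{0}u_{\infty}=z_{\infty}u_{\infty}$---in particular, that the nonlocal operator $\mathcal{\tilde{C}}^{\lambda_{n}}P_{ey}\mathcal{\tilde{C}}^{\lambda_{n}}u_{n}$ converges weakly to $(P_{ey}/\psi_{ey}^{2})u_{\infty}$ after localization---is the most delicate point and requires the full force of Lemma \ref{lemma-estimates-E-t} together with \eqref{lim1}.
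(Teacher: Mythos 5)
Your proposal is correct and follows the same route as the paper: the paper gives no explicit proof of this Proposition, but simply invokes the Vock--Hunziker asymptotic perturbation theorem (\cite{vock-hunz82}, \cite{hislop-sig-book}, \cite{hunz-notes}) after the two preceding lemmas supply the local-compactness and stability-at-infinity hypotheses needed for that theorem. Your unpacking of the contradiction argument (localization by a cutoff, weak limits, the tail estimate, and commutator control) is simply the internal machinery of the cited theorem rather than content beyond what the paper relies on.
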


It follows from above that for $\lambda$ small, the operators $\mathcal{A}%
^{\lambda}$ have discrete eigenvalues inside $B\left(  k_{0};\delta\right)  $
with the total algebraic multiplicity equal to that of $k_{0}$.

\section{Moving kernel and proof of main results\newline}

To study growing modes, we need to understand how the zero eigenvalue of
$\mathcal{A}^{0}$ is perturbed, in particular its moving direction. In this
Section, we derive a moving kernel formula and use it to prove the main
results. We assume hypothesis (H1) and that $\mu$ is not a turning point. Then
by Lemma \ref{lemma-A0-property} (iii), $\ker\mathcal{A}^{0}\left(
\mu\right)  =$ $\left\{  \psi_{ex}\left(  \xi\right)  \right\}  $. Let
$\lambda_{1},\delta>0$ be as given in Proposition \ref{Prop-asym-perturb} for
$k_{0}=0$. Since $\dim P_{\lambda}=\dim P_{0}=1$, when $\lambda<\lambda_{1}$
there is only one real eigenvalue of $\mathcal{A}^{\lambda}$ inside $B\left(
0;\delta\right)  $, which we denote by $k_{\lambda}\in\mathbf{R}$. The
following lemma determines the sign of $k_{\lambda}$ when $\lambda$ is
sufficiently small.

\begin{lemma}
\label{lemma-moving}Assume hypothesis (H1) and that $\mu$ is not a turning
point. For $\lambda>0$ small enough, let $k_{\lambda}\in\mathbf{R}$ to be the
eigenvalue of $\mathcal{A}^{\lambda}$ near zero. Then$\ $
\begin{equation}
\lim_{\lambda\rightarrow0+}\frac{k_{\lambda}}{\lambda^{2}}=-\frac{1}{c}%
\frac{dE}{dc}/\left\Vert \psi_{ex}\right\Vert _{L^{2}}^{2},
\label{formula-moving}%
\end{equation}
where $E\left(  \mu\right)  $ is the total energy defined in (\ref{energy})$.$
\end{lemma}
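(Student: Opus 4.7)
The plan is to compute the second-order coefficient of the perturbed eigenvalue $k_\lambda$ via a Rayleigh-type formula obtained by pairing the eigenvalue equation with $\psi_{ex}$. By Proposition~\ref{Prop-asym-perturb} together with Lemma~\ref{lemma-A0-property}(iii), $\ker\mathcal{A}^0=\mathrm{span}\{\psi_{ex}\}$ is a simple eigenvalue at non-turning $\mu$, so the spectral projector $P_\lambda$ of $\mathcal{A}^\lambda$ is one-dimensional and converges to $P_0$. I would normalize the eigenfunction $\phi_\lambda$ of $\mathcal{A}^\lambda$ associated with $k_\lambda$ so that $\phi_\lambda\to\psi_{ex}$ in $L^2(\mathbf{R})$. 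Since $\mathcal{A}^0$ is self-adjoint with $\mathcal{A}^0\psi_{ex}=0$, pairing $\mathcal{A}^\lambda\phi_\lambda=k_\lambda\phi_\lambda$ with $\psi_{ex}$ gives
\[
k_\lambda\,\langle\phi_\lambda,\psi_{ex}\rangle \;=\; \bigl\langle(\mathcal{A}^\lambda-\mathcal{A}^0)\phi_\lambda,\,\psi_{ex}\bigr\rangle,
\]
and since $\langle\phi_\lambda,\psi_{ex}\rangle\to\|\psi_{ex}\|_{L^2}^2\neq 0$, it suffices to extract the $\lambda^2$-coefficient of the right-hand side with $\phi_\lambda$ replaced by $\psi_{ex}$ to leading order.

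The operator difference $\mathcal{A}^\lambda-\mathcal{A}^0$ is a sum of terms linear or quadratic in $\tilde{\mathcal{E}}^{\lambda,+}=\lambda(\lambda+\tilde{\mathcal{D}})^{-1}$, so I would use the formal Neumann series $\tilde{\mathcal{E}}^{\lambda,+}=\lambda\tilde{\mathcal{D}}^{-1}-\lambda^2\tilde{\mathcal{D}}^{-2}+O(\lambda^3)$ on vectors lying in the range of $\tilde{\mathcal{D}}^2$. A key observation is parity: the solitary wave is symmetric, $\psi_{ex}/\psi_{ey}=-\eta_{ex}$ is odd in $\xi$, and $\tilde{\mathcal{D}}$ swaps odd and even functions. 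Hence the naive $O(\lambda)$ matrix element $\langle\tilde{\mathcal{D}}^{-1}(\psi_{ex}/\psi_{ey}),\psi_{ex}\rangle$ vanishes by parity, together with the analogous cross terms, so the leading contribution is
\[
\bigl\langle(\mathcal{A}^\lambda-\mathcal{A}^0)\psi_{ex},\psi_{ex}\bigr\rangle \;=\; -\lambda^2\,Q\bigl(\tilde{\mathcal{D}}^{-1}(\psi_{ex}/\psi_{ey})\bigr) \;+\; o(\lambda^2),
\]
where $Q$ is an explicit quadratic form involving $P_{ey}$, $\psi_{ey}$ and $b$, coming from the expansion of $\tilde{\mathcal{C}}^\lambda P_{ey}\tilde{\mathcal{C}}^\lambda$ and the anti-symmetry of $\tilde{\mathcal{D}}$ on $L^2_{b\psi_{ey}}$.

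The decisive step is to identify the preimage $\chi:=\tilde{\mathcal{D}}^{-1}(\psi_{ex}/\psi_{ey})$ with the surface trace of $\partial_c\psi_e$ pulled back by the hodograph map $B$. I would do this by differentiating the steady free-boundary system (\ref{stream}) with respect to $c$ (the only explicit $c$-dependence sits in the Bernoulli condition $|\nabla\psi_e|^2+2gy=c^2$), transporting the derivative through $B$ using the formulas of Section~2, and checking directly that $\partial_\xi(\psi_{ey}\chi)=b\cdot(\psi_{ex}/\psi_{ey})$. Once $\chi$ is identified, the quadratic form $Q(\chi)$ can be rewritten by integration by parts and the steady Bernoulli identity as an integral over the fluid domain $\mathcal{D}_e$, which is exactly the derivative along the solitary wave branch of the energy functional (\ref{energy}); the resulting constant is $\tfrac{1}{c}dE/dc$, yielding (\ref{formula-moving}).

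The main obstacle is precisely this identification and the rigorous justification of the Neumann expansion of $\tilde{\mathcal{E}}^{\lambda,+}$: the operator $\tilde{\mathcal{D}}^{-1}$ is unbounded, so one must verify that $\psi_{ex}/\psi_{ey}$ lies in the domain of $\tilde{\mathcal{D}}^{-2}$ with sufficient decay for the error estimates, using the supercritical bound $F>1$ and the exponential decay of $\eta_e$ and $\psi_{ex}$ at $\pm\infty$. The algebraic identification $\chi\sim\partial_c\psi_e|_{\mathcal{S}_e}\circ B$, which is the essence of the \emph{moving kernel}, is the technical heart of the lemma and is the natural candidate for an appendix computation.
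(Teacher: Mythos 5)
Your general shape is right — pair the eigenvalue equation with $\psi_{ex}$, observe that the first-order coefficient vanishes by parity, and bring in a $c$-derivative of the steady wave that is identified through the steady system and an Appendix computation leading to $dE/dc$. But two of your central steps are wrong, and together they change the answer.

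First, the assertion that it suffices to ``extract the $\lambda^2$-coefficient of the right-hand side with $\phi_\lambda$ replaced by $\psi_{ex}$'' is false. Because the operator difference $\mathcal{A}^\lambda-\mathcal{A}^0$ is only $O(\lambda)$ (in the strong sense), the $O(\lambda)$ correction to the eigenfunction contributes at $O(\lambda^2)$. The paper makes this explicit: writing $u_\lambda=\bar c_\lambda\psi_{ex}+\lambda\bar v_\lambda$ with $(\bar v_\lambda-\mathrm{const}\cdot\psi_{ex},\psi_{ex})=0$ and proving $\bar v_\lambda\to\partial_c\bar\psi_e(\xi)$ in $H^{1/2}$, one finds
\[
\frac{k_\lambda}{\lambda^2}\langle u_\lambda,\psi_{ex}\rangle
=\bar c_\lambda\Bigl\langle\tfrac{\mathcal{A}^\lambda-\mathcal{A}^0}{\lambda^2}\psi_{ex},\psi_{ex}\Bigr\rangle
+\Bigl\langle\tfrac{\mathcal{A}^\lambda-\mathcal{A}^0}{\lambda}\bar v_\lambda,\psi_{ex}\Bigr\rangle
=\bar c_\lambda I_1+I_2,
\]
and $I_2$ has a nonzero limit
$-\langle\partial_c\bar\psi_e,\tfrac{u_e}{\psi_{ey}}+\tfrac{P_{ey}\eta_e}{\psi_{ey}^2}\rangle$
that contributes directly to the formula for $dE/dc$. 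Dropping $I_2$ gives the wrong constant.

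Second, the identification $\chi=\tilde{\mathcal D}^{-1}(\psi_{ex}/\psi_{ey})=\partial_c\psi_e|_{\mathcal S_e}$ is incorrect. By definition $\tilde{\mathcal D}\chi=\frac1b\frac{d}{d\xi}(\psi_{ey}\chi)$ and $dx=b\,d\xi$ on the surface, so $\psi_{ey}\chi=\int(\psi_{ex}/\psi_{ey})\,dx=-\eta_e$ and therefore $\tilde{\mathcal D}^{-1}(\psi_{ex}/\psi_{ey})=-\eta_e/\psi_{ey}$, which is not $\partial_c\bar\psi_e$ (from (\ref{intern-dc-eta}) one has $-\eta_e/\psi_{ey}=\partial_c\eta_e+\psi_{ey}^{-1}\partial_c\bar\psi_e$, a genuinely different object). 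The true appearance of $\partial_c\bar\psi_e$ is as the solution of the nonhomogeneous elliptic problem $\mathcal{A}_e^0\,\partial_c\bar\psi_e=-(u_e/\psi_{ey}+P_{ey}\eta_e/\psi_{ey}^2)$ — that is, it is the limit of the eigenfunction correction $\bar v_\lambda$, obtained by differentiating the free-boundary steady system in $c$, not a one-step inverse of $\tilde{\mathcal D}$. Finally, the Neumann expansion of $\tilde{\mathcal{E}}^{\lambda,+}$ is formal: $\mathcal{E}^{\lambda,\pm}\to0$ only strongly and $\tilde{\mathcal D}^{-1}$ is unbounded, which is why the paper computes the relevant limits term by term through the spectral calculus (Lemma~\ref{lemma-e-lb}) on the explicitly known vectors $\eta_e,u_e,\partial_c\bar\psi_e$, rather than expanding the resolvent.
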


The following a priori estimate is used in the proof.

\begin{lemma}
\label{lemma-apriori}For $\lambda>0$ small enough, consider $u\in H^{1}\left(
\mathbf{R}\right)  $ satisfying the equation $\left(  \mathcal{A}^{\lambda
}-z\right)  u=v$, where $z\in\mathbb{C}$ with $\operatorname{Re}z\leq\frac
{1}{2}\delta_{0}$ and $v\in L^{2}$. Then we have
\begin{equation}
\left\Vert u\right\Vert _{H^{\frac{1}{2}}}\leq C\left(  \left\Vert
u\right\Vert _{L_{e}^{2}}+\left\Vert v\right\Vert _{L^{2}}\right)  ,
\label{apriori-small-lb}%
\end{equation}
for some constant $C$ independent of $\lambda$. Here, the norm $\left\Vert
\cdot\right\Vert _{L_{e}^{2}}$ is defined in (\ref{defn-L2-e}) with the weight
$e\left(  \xi\right)  $ defined by (\ref{defn-e}).
\end{lemma}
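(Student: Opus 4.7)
\textbf{Proof proposal for Lemma \ref{lemma-apriori}.} My plan is to establish the estimate by running the quadratic-form argument from Lemma \ref{lemma-no-eigen-infy} in the small-$\lambda$ regime, then feeding the equation $(\mathcal{A}^{\lambda}-z)u=v$ into it. Taking the inner product with $u$ and real parts gives
\begin{equation*}
\operatorname{Re}(\mathcal{A}^{\lambda}u,u)\;=\;\operatorname{Re} z\,\|u\|_{L^{2}}^{2}+\operatorname{Re}(v,u),
\end{equation*}
so it suffices to prove the one-sided lower bound
\begin{equation*}
\operatorname{Re}(\mathcal{A}^{\lambda}u,u)\;\geq\;\tfrac{1}{2}\delta_{0}\|u\|_{L^{2}}^{2}+\tfrac{1}{2}C_{0}\delta\|u\|_{H^{\frac{1}{2}}}^{2}-C_{\varepsilon}\|u\|_{L_{e}^{2}}^{2}
\end{equation*}
uniformly for $\lambda$ small, with $\delta,\varepsilon$ fixed. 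Combined with $\operatorname{Re}z\leq\tfrac{1}{2}\delta_{0}$ and Young's inequality $|\operatorname{Re}(v,u)|\leq\eta\|u\|_{L^{2}}^{2}+\tfrac{1}{4\eta}\|v\|_{L^{2}}^{2}$, this will absorb $\|u\|_{L^{2}}^{2}$ into the $\tfrac{1}{2}\delta_{0}\|u\|_{L^{2}}^{2}$ on the left, leave $\tfrac{1}{4}C_{0}\delta\|u\|_{H^{\frac{1}{2}}}^{2}$ on the right, and yield \eqref{apriori-small-lb} after taking square roots.

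To produce the lower bound I would decompose $\mathcal{A}^{\lambda}=\mathcal{N}+b\mathcal{\tilde{C}}^{\lambda}P_{ey}\mathcal{\tilde{C}}^{\lambda}$. Lemma \ref{lemma-operator-N} immediately gives $(\mathcal{N}u,u)\geq(1-\delta)\tfrac{1}{h}\|u\|_{L^{2}}^{2}+C_{0}\delta\|u\|_{H^{\frac{1}{2}}}^{2}$. For the second piece I substitute $P_{ey}=-g+\psi_{ey}\tilde{a}$ from \eqref{defn-a} and $\mathcal{\tilde{C}}^{\lambda}=(1-\mathcal{\tilde{E}}^{\lambda,+})/\psi_{ey}$, then split each resulting expression exactly as in the $T_{1}^{1},T_{1}^{2},T_{2}^{1},T_{2}^{2}$ decomposition in the proof of Lemma \ref{lemma-no-eigen-infy}, using the antisymmetry of $\mathcal{\tilde{D}}$ on $L_{b\psi_{ey}}^{2}$. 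The leading piece $T_{1}^{1}$ contributes $-\tfrac{g}{c^{2}}\|u\|_{L^{2}}^{2}$ modulo $O(\|u\|_{L_{e}^{2}}^{2})$ because $|b-1|,|\psi_{ey}^{-3}-c^{-3}|,|\psi_{ey}^{-1}-c^{-1}|\lesssim e(\xi)$. Together with the $\mathcal{N}$ term this reconstructs the coefficient $\delta_{0}=\tfrac{1}{h}-\tfrac{g}{c^{2}}$ (cf.\ \eqref{defn-d-0}). The commutator remainders $T_{1}^{2},T_{2}^{2}$ reduce to bounds of the form $\|\tilde{c}\,\mathcal{\tilde{E}}^{\lambda,+}(\cdot)\|_{L^{2}}$ and $\|\tilde{a}\,\mathcal{\tilde{E}}^{\lambda,+}(\cdot)\|_{L^{2}}$, both of which are handled by Lemma \ref{lemma-estimates-E-t}(iii): since $\tilde{a},\tilde{c}$ decay at infinity, for any $\varepsilon>0$ there is $\lambda_{\varepsilon}>0$ such that $\|\tilde{c}\,\mathcal{\tilde{E}}^{\lambda,+}w\|_{L^{2}}\leq\varepsilon\|w\|_{H^{\frac{1}{2}}}$ for $0<\lambda<\lambda_{\varepsilon}$, and similarly for $\tilde{a}$. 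The $T_{2}^{1}$ piece is immediately $O(\|u\|_{L_{e}^{2}}\|u\|_{L^{2}})$. Finally, choosing $\delta=\tfrac{1}{2}h\delta_{0}$ and $\varepsilon$ small compared with $C_{0}\delta/C$ yields the claimed bound.

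The main technical subtlety, as in Lemma \ref{lemma-no-eigen-infy}, is keeping the $-\tfrac{g}{c^{2}}\|u\|_{L^{2}}^{2}$ coefficient exact: the quantities $1/\psi_{ey}$ and $b$ must be replaced by their constants $1/c,1$ up to errors majorized pointwise by $e(\xi)$, so that their squared contributions line up to give $\tfrac{g}{c^{2}}$ cleanly. The only genuinely new ingredient beyond Lemma \ref{lemma-no-eigen-infy} is the switch from Lemma \ref{lemma-estimates-E-t}(iv) (which was valid for $\lambda\to\infty$) to Lemma \ref{lemma-estimates-E-t}(iii) (valid for $\lambda\to0+$), which provides exactly the small-factor $\varepsilon\|u\|_{H^{\frac{1}{2}}}$ needed to be absorbed by the $C_{0}\delta\|u\|_{H^{\frac{1}{2}}}^{2}$ term coming from Lemma \ref{lemma-operator-N}. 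I expect no further obstacles since the nonhomogeneity $v$ enters only linearly through the Cauchy--Schwarz step at the end.
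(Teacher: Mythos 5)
Your proposal follows essentially the same route as the paper: take the real part of $((\mathcal{A}^{\lambda}-z)u,u)=(v,u)$, invoke the quadratic-form lower bound from Lemma \ref{lemma-no-eigen-infy} with Lemma \ref{lemma-estimates-E-t}(iii) (small $\lambda$) replacing part (iv) (large $\lambda$), and close with Young's inequality; the paper's proof is a three-line sketch saying exactly this. One small imprecision: you claim the $\eta\|u\|_{L^2}^2$ error from Young's inequality is absorbed into the $\tfrac{1}{2}\delta_0\|u\|_{L^2}^2$ term on the left, but that margin is already exactly consumed by $\operatorname{Re}z\,\|u\|_{L^2}^2\leq\tfrac{1}{2}\delta_0\|u\|_{L^2}^2$; instead the $\eta\|u\|_{L^2}^2$ should be absorbed into the $H^{1/2}$ term (since $\|u\|_{L^2}\leq\|u\|_{H^{1/2}}$), or one should take $\delta$ smaller so that the coefficient $(1-\delta)/h-g/c^2$ is strictly larger than $\tfrac{1}{2}\delta_0$, leaving a positive $L^2$ margin. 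Either fix is immediate and the argument closes as you describe.
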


\begin{proof}
The proof is almost the same as that of the estimate (\ref{claim-bound-infy})
in the proof of Lemma \ref{lemma-no-eigen-infy}. So we only sketch it. We
have
\[
\left(  \mathcal{N}u,u\right)  +\operatorname{Re}\left(  b\mathcal{\tilde{C}%
}^{\lambda}P_{ey}\left(  \xi\right)  \mathcal{\tilde{C}}^{\lambda}u,u\right)
-\operatorname{Re}z\left\Vert u\right\Vert _{^{2}}^{2}=\operatorname{Re}%
\left(  u,v\right)  .
\]
By the same estimates as in proving (\ref{claim-bound-infy}), except that
Lemma \ref{lemma-estimates-E-t} (iii) is used, we have
\begin{align*}
\  &  \left[  \left(  1-\delta\right)  \frac{1}{h}-\frac{g}{c^{2}}-\frac{1}%
{2}\delta_{0}\right]  \left\Vert u\right\Vert _{L^{2}}^{2}\ +\left(
C_{0}\delta-C\varepsilon\right)  \left\Vert u\right\Vert _{H^{\frac{1}{2}}%
}^{2}-C_{\varepsilon}\left\Vert u\right\Vert _{L_{e}^{2}}^{2}\\
&  \leq\varepsilon\left\Vert u\right\Vert _{L^{2}}^{2}+\frac{1}{\varepsilon
}\left\Vert v\right\Vert _{L^{2}}^{2}.
\end{align*}
Then the estimate (\ref{apriori-small-lb}) follows by choosing $\varepsilon>0$
and $\delta\in\left(  0,1\right)  \ $properly.
\end{proof}

Assuming Lemma \ref{lemma-moving}, we prove Theorem \ref{thm-main}.

\begin{proof}
[Proof of Theorem \ref{thm-main}]We fixed $\mu\in\left(  \tilde{\mu}_{1}%
,\mu_{1}\right)  $. Under the assumption (H1), it follows from Lemma
\ref{lemma-A0-property} that $\mathcal{A}^{0}\left(  \mu\right)  $ has only
one negative eigenvalue $k_{0}^{-}<0\ $and $\ker\mathcal{A}^{0}\left(
\mu\right)  =$ $\left\{  \psi_{ex}\left(  \xi\right)  \right\}  $. By
Proposition \ref{Prop-asym-perturb} and Lemma \ref{lemma-moving}, there
exists$\ \lambda_{1},\delta>0\ $small enough, such that for $0<\lambda
<\lambda_{1}$, $\mathcal{A}^{\lambda}$ has one negative eigenvalue
$k_{\lambda}^{-}\ $in $B\left(  k_{0}^{-};\delta\right)  \ $with multiplicity
$1$ and one positive eigenvalue $k_{\lambda}\ $in $B\left(  0;\delta\right)  $
because $\frac{dE}{dc}=E^{\prime}\left(  \mu\right)  /c^{\prime}\left(
\mu\right)  <0$ for $\mu\in\left(  \tilde{\mu}_{1},\mu_{1}\right)  $. Consider
the region
\[
\Omega=\left\{  z|\ 0>\operatorname{Re}z>-2M\text{ and }\left\vert
\operatorname{Im}z\right\vert <2M\right\}  ,
\]
where $M$ is the uniform bound of $\left\Vert \mathcal{A}^{\lambda
}-\mathcal{N}\right\Vert $. We claim that: for $\lambda$ small enough,
$\mathcal{A}^{\lambda}$ has exactly $2$ eigenvalues (counting multiplicity)
in
\[
\Omega_{\delta}=\left\{  z|\ 2\delta>\operatorname{Re}z>-2M\text{ and
}\left\vert \operatorname{Im}z\right\vert <2M\right\}  .
\]
That is, all eigenvalues of $\mathcal{A}^{\lambda}$ with real parts no greater
than $2\delta$ lie$\ $in $B\left(  k_{0}^{-};\delta\right)  \cup B\left(
0;\delta\right)  $. Suppose otherwise, there exists a sequence $\lambda
_{n}\rightarrow0+$ and
\[
\left\{  u_{n}\right\}  \in H^{1}\left(  \mathbf{R}\right)  ,z_{n}\in
\Omega/\left(  B\left(  k_{0}^{-};\delta\right)  \cup B\left(  0;\delta
\right)  \right)
\]
such that $\left(  \mathcal{A}^{\lambda_{n}}-z_{n}\right)  u_{n}=0$. We
normalize $u_{n}$ by setting $\left\Vert u_{n}\right\Vert _{L_{e}^{2}}=1$.
Then by Lemma \ref{lemma-apriori}, we have $\left\Vert u_{n}\right\Vert
_{H^{\frac{1}{2}}}\leq C$. By the same argument as in the proof of Lemma
\ref{lemma-no-eigen-infy}, $u_{n}\rightarrow u_{\infty}\neq0$ weakly in
$H^{\frac{1}{2}}$. Let
\[
z_{n}\rightarrow z_{\infty}\in\bar{\Omega}/\left(  B\left(  k_{0}^{-}%
;\delta\right)  \cup B\left(  0;\delta\right)  \right)  ,
\]
then $\mathcal{A}^{0}u_{\infty}=z_{\infty}u_{\infty}$ which is a
contradiction. This proves the claim. Thus for $\lambda$ small enough,
$\mathcal{A}^{\lambda}$ has exactly one eigenvalue in $\Omega$.

Suppose the conclusion of Theorem \ref{thm-main} does not hold, then
$\mathcal{A}^{\lambda}\left(  \mu\right)  $ has no kernel for any $\lambda>0$.
Define $n_{\Omega}\left(  \lambda\right)  $ to be the number of eigenvalues
(counting multiplicity) of $\mathcal{A}^{\lambda}\ $in $\Omega$. By
(\ref{bound-essential}), the region $\Omega\ $is away from the essential
spectrum of $\mathcal{A}^{\lambda}$, so $n_{\Omega}\left(  \lambda\right)  $
is a finite integer. For $\lambda$ small enough, we have proved that
$n_{\Omega}\left(  \lambda\right)  =1$. By Lemma \ref{lemma-no-eigen-infy},
$n_{\Omega}\left(  \lambda\right)  =0$ for $\lambda>\Lambda$. Define the two
sets
\[
S_{\text{odd}}=\left\{  \lambda>0|\text{ }n_{\Omega}\left(  \lambda\right)
\text{ is odd}\right\}  \text{ and }\ S_{\text{even}}=\left\{  \lambda
>0|\text{ }n_{\Omega}\left(  \lambda\right)  \text{ is even}\right\}  .
\]
Then both sets are non-empty. Below, we show that both $S_{\text{odd}}$ and
$S_{\text{even}}$ are open. Let $\lambda_{0}\in$ $S_{\text{odd}}$ and denote
$k_{1},\cdots,k_{l}$ $\left(  l\leq n_{\Omega}\left(  \lambda_{0}\right)
\right)  \ $to be all distinct eigenvalues of $\mathcal{A}^{\lambda_{0}}$ in
$\Omega$. Denote $ih_{1},\cdots,ih_{m}$ to be all eigenvalues of
$\mathcal{A}^{\lambda_{0}}$ on the imaginary axis. Then $\left\vert
h_{j}\right\vert \leq M$, $1\leq j\leq m$. Choose $\delta>0$ sufficiently
small such that the disks $B\left(  k_{i};\delta\right)  $ $\left(  1\leq
i\leq l\right)  $ and $B\left(  ih_{j};\delta\right)  \ \left(  1\leq j\leq
m\right)  $ are disjoint, $B\left(  k_{i};\delta\right)  \subset\Omega$ and
$B\left(  ih_{j};\delta\right)  $ does not contain $0$. Note that
$\mathcal{A}^{\lambda}$ depends on $\lambda$ analytically in $\left(
0,+\infty\right)  $. By the analytic perturbation theory
(\cite{hislop-sig-book}), if $\left\vert \lambda-\lambda_{0}\right\vert $ is
sufficiently small, any eigenvalue of $\mathcal{A}^{\lambda}$ in
$\Omega_{\delta}$ lies in one of the disks $B\left(  k_{i};\delta\right)  $ or
$B\left(  ih_{j};\delta\right)  $. So $n_{\Omega}\left(  \lambda\right)  $ is
$n_{\Omega}\left(  \lambda_{0}\right)  $ plus the number of eigenvalues in
$\cup_{i=1}^{m}B\left(  ih_{j};\delta\right)  $ with the negative real part.
The later number must be even, since the complex eigenvalues of $\mathcal{A}%
^{\lambda}$ appears in conjugate pairs. Thus, $n_{\Omega}\left(
\lambda\right)  $ is odd for $\left\vert \lambda-\lambda_{0}\right\vert $
small enough. This shows that $S_{\text{odd}}$ is open. For the same reason,
$S_{\text{even}}$ is open$.$Thus, $\left(  0,+\infty\right)  $ is the union of
two non-empty, disjoint open sets $S_{\text{odd}}$ and $S_{\text{even}}$. A contradiction.

So there exists $\lambda>0$ and $0\neq u\in$ $H^{1}\left(  \mathbf{R}\right)
$ such that $\mathcal{A}^{\lambda}u=0$. Let $f=\mathcal{B}^{-1}u\in
H^{1}\left(  \mathcal{S}_{e}\right)  $, then $\mathcal{A}_{e}^{\lambda}f=0$.
Define $\eta\left(  x\right)  =\mathcal{C}^{\lambda}f,\ P(x)=-P_{ey}%
\eta\left(  x\right)  \ $and $\psi\left(  x,y\right)  $ to be the solution of
the Dirichlet problem
\[
\Delta\psi=0\ \ \ \text{in}\ \ \mathcal{D}_{e},\ \psi|_{\mathcal{S}_{e}%
}=f\ ,\ \psi(x,-h)=0.
\]
Then $\left(  \eta\left(  x\right)  ,\psi\left(  x,y\right)  \right)  $
satisfies the system (\ref{eqn-G-phi})-(\ref{eqn-G-bottom}), thus $e^{\lambda
t}\left[  \eta\left(  x\right)  ,\psi\left(  x,y\right)  \right]  $ is a
growing mode solution to the linearized problem (\ref{eqn-L}). Below we prove
the regularity of $\left[  \eta\left(  x\right)  ,\psi\left(  x,y\right)
\right]  $. Since the operator $\mathcal{C}^{\lambda}$ is regularity
preserving, from $\mathcal{A}_{e}^{\lambda}f=0$ we have
\[
\psi_{n}(x)=-\mathcal{C}^{\lambda}P_{ey}\mathcal{C}^{\lambda}f\in H^{1}\left(
\mathcal{S}_{e}\right)  .
\]
By the elliptic regularity of Neumann problems (\cite{ag-book}), we have
$\psi\left(  x,y\right)  \in H^{5/2}\left(  \mathcal{D}_{e}\right)  .$ So by
the trace theorem, $f=\psi|_{\mathcal{S}_{e}}\in H^{2}\left(  \mathcal{S}%
_{e}\right)  $. Repeating this process, we get $\psi_{n}(x)\in H^{2}\left(
\mathcal{S}_{e}\right)  $ and $\psi\left(  x,y\right)  \in H^{7/2}\left(
\mathcal{D}_{e}\right)  .$ Since the irrotational solitary wave profile and
the boundary $\mathcal{S}_{e}$ are analytic (\cite{lewy}), we can repeat the
above process to show $\psi\left(  x,y\right)  \in H^{k}\left(  \mathcal{D}%
_{e}\right)  $ for any $k>0$. Therefore $\eta\left(  x\right)  =\mathcal{C}%
^{\lambda}\left(  \psi|_{\mathcal{S}_{e}}\right)  \in H^{k}\left(
\mathcal{S}_{e}\right)  $, for any$\ k>0$. By Sobolev embedding, $\left[
\eta\left(  x\right)  ,\psi\left(  x,y\right)  \right]  \in C^{\infty}$. This
finishes the proof of Theorem \ref{thm-main}.
\end{proof}

\begin{proof}
[Proof of Theorem \ref{thm-minor}]Let $\mu_{1}<\mu_{2},\cdots<\mu_{n}<\cdots$
be all the turning points. Then $\mu_{n}\rightarrow+\infty.$ Under the
assumption (H1), $\ker\mathcal{A}^{0}\left(  \mu\right)  =$ $\left\{
\psi_{ex}\left(  \xi\right)  \right\}  $, for $\mu\in\left(  \mu_{i},\mu
_{i+1}\right)  ,$ $i\geq1$. Denote by $n^{-}\left(  \mu\right)  \ $the number
of negative eigenvalues of $\mathcal{A}^{0}\left(  \mu\right)  $. Then
$n^{-}\left(  \mu\right)  $ is a constant in $\left(  \mu_{i},\mu
_{i+1}\right)  $, by the same argument in the proof of Lemma
\ref{lemma-A0-property} (iv). Denote $\tilde{\mu}_{1}<\tilde{\mu}_{2}%
,\cdots<\tilde{\mu}_{n}<\cdots$ to be all the critical points of $E\left(
\mu\right)  $. Each $\tilde{\mu}_{k}$ lies in some interval $\left(  \mu
_{i},\mu_{i+1}\right)  $. Then the sign of $dE/dc=E^{\prime}\left(
\mu\right)  /c^{\prime}\left(  \mu\right)  $ changes at $\tilde{\mu}_{k}$ in
$\left(  \mu_{i},\mu_{i+1}\right)  $. So we can find an interval $I_{k}$
$\subset\left(  \mu_{i},\mu_{i+1}\right)  $ such that the number
\[
\tilde{n}^{-}\left(  \mu\right)  =n^{-}\left(  \mu\right)  +\left(
1+sign\left(  E^{\prime}\left(  \mu\right)  /c^{\prime}\left(  \mu\right)
\right)  \right)  /2
\]
is odd for $\mu\in I_{k}$. Note that $\tilde{n}^{-}\left(  \mu\right)  $ is
the number of eigenvalues of $\mathcal{A}^{\lambda}\left(  \mu\right)  $ in
the left half plane, for $\lambda$ sufficiently small. So by the same proof as
that of Theorem \ref{thm-main}, we get a purely growing mode for solitary
waves with $\mu\in I_{k}$. Since $\tilde{\mu}_{n}\rightarrow\infty,$ the
intervals $I_{n}$ goes to infinity.
\end{proof}

We make two remarks about the unstable solitary waves proved above.

\begin{remark}
\label{remark-appx}In terms of the parameter $\omega=1-F^{2}q_{c}^{2}$, it was
found (\cite{tanaka86}) from numerical computations that the energy maximum is
achieved at $\omega\approx$ $0.88$, which corresponds to the
amplitude-to-depth ratio $\alpha=\eta_{e}\left(  0\right)  /h=$ $0.7824$
(\cite{long-tanaka}). The highest wave has the parameters $\omega=1$,
$\alpha=$ $0.8332$ and the maximal travel speed is achieved at $\omega
=0.917,\alpha=0.790$ (\cite{tanaka86}, \cite{long-fenton}). So the unstable
waves proved in Theorem \ref{thm-main} and \ref{thm-minor} are of large
amplitude, and their height is comparable to the water depth. Therefore, this
type of instability cannot be captured in the approximate models based on
small amplitude assumptions. Indeed, although some approximate models share
certain features of the full water wave model, no unstable solitary waves have
been found. For example, the \textit{Green-Naghdi model is proposed to model
water waves of larger amplitude and it also has an in}definite energy
functional, but the numerical computation (\textit{\cite[P. 529]{li-y})
}indicates that all the G-N solitary waves are spectrally stable. For
Camassa-Holm and \textit{Degasperis-Procesi equations, there exist cornered
solitary waves (peakons). However, these peakons are shown to be nonlinearly
stable (\cite{cs-peakon}, \cite{lin-dp}). So the instability of large solitary
waves seems to be a particular feature of the full water wave model. }
\end{remark}

\textit{ }

\begin{remark}
\label{remark-break}The linear instability suggests that the solitary wave
cannot preserve its shape for all the time. The long time evolution around an
unstable wave was studied numerically in \cite{tanaka-dold-pere87}. It was
found that small perturbations with the same amplitude but opposite signs can
lead to totally different long time behaviors. For one sign, the perturbed
wave breaks quickly and for the other sign, the perturbed wave never breaks
and it finally approaches a slightly lower stable solitary wave with almost
the same energy. Note that in the breaking case, the initial perturbed profile
has a rather negative slope (\cite{tanaka-dold-pere87}). The wave breaking for
shallow water waves models such as Camassa-Holm (\cite{con-esher}) and Whitham
equations (\cite{siegler}, \cite{naumkim}) is due to the initial large
negative slope. It would be interesting to clarify whether or not the wave
breaking found in \cite{tanaka-dold-pere87} has the same mechanism.

The wave breaking due to the instability of large solitary waves had been used
to explain the breaking waves approaching beaches (\cite{duncan},
\cite{peregrine}, \cite{peregrine2}). When a wave approaches the beach, the
amplitude-to-depth ratio can increase to be near the critical ratio
($\approx0.7824$) for instability and consequently the wave breaking can occur.
\end{remark}

It remains to prove the moving kernel formula (\ref{formula-moving}).

\begin{proof}
[Proof of Lemma \ref{lemma-moving}]As described at the beginning of this
Section, for $\lambda>0$ small enough, there exists $u_{\lambda}\in
H^{1}\left(  \mathbf{R}\right)  $, such that $\left(  \mathcal{A}^{\lambda
}-k_{\lambda}\right)  u_{\lambda}=0$ with $k_{\lambda}\in\mathbf{R}$ and
$\lim_{\lambda\rightarrow0+}k_{\lambda}=0$. We normalize $u_{\lambda}$ by
$\left\Vert u_{\lambda}\right\Vert _{L_{e}^{2}}=1$. Then by Lemma
\ref{lemma-apriori}, we have $\left\Vert u_{\lambda}\right\Vert _{H^{\frac
{1}{2}}}\leq C$ and as in the proof of Lemma \ref{lemma-no-eigen-infy},
$u_{\lambda}\rightarrow u_{0}\neq0$ weakly in $H^{\frac{1}{2}}$. Since
$\mathcal{A}^{0}u_{0}=0$ and $\ker\mathcal{A}^{0}\left(  \mu\right)  =$
$\left\{  \psi_{ex}\left(  \xi\right)  \right\}  $, we have $u_{0}=c_{0}%
\psi_{ex}\left(  \xi\right)  $ for some $c_{0}\neq0$. Moreover, we have
$\left\Vert u_{\lambda}-u_{0}\right\Vert _{H^{\frac{1}{2}}}=0$. To show this,
first we note that $\left\Vert u_{\lambda}-u_{0}\right\Vert _{L_{e}^{2}%
}\rightarrow0$, since
\[
\left\Vert u_{\lambda}-u_{0}\right\Vert _{L_{e}^{2}}^{2}\leq\int_{\left\vert
\xi\right\vert \leq R}e\left(  \xi\right)  \left\vert u_{\lambda}%
-u_{0}\right\vert ^{2}\ d\xi+\max_{\left\vert \xi\right\vert \geq R}e\left(
\xi\right)  \left\Vert u_{\lambda}-u_{0}\right\Vert _{L^{2}}^{2},
\]
and the second term is arbitrarily small for large $R$ while the first term
tends to zero by the local compactness. Since
\[
\left(  \mathcal{A}^{\lambda}-k_{\lambda}\right)  \left(  u_{\lambda}%
-u_{0}\right)  =k_{\lambda}u_{0}+\left(  \mathcal{A}^{0}-\mathcal{A}^{\lambda
}\right)  u_{0},
\]
by Lemma \ref{lemma-apriori} we have
\[
\left\Vert u_{\lambda}-u_{0}\right\Vert _{H^{\frac{1}{2}}}\leq C\left(
\left\Vert u_{\lambda}-u_{0}\right\Vert _{L_{e}^{2}}^{2}+\left\vert
k_{\lambda}\right\vert \left\Vert u_{\lambda}\right\Vert _{L^{2}}%
^{2}+\left\Vert \left(  \mathcal{A}^{0}-\mathcal{A}^{\lambda}\right)
u_{0}\right\Vert _{L^{2}}^{2}\right)  \rightarrow0,
\]
when $\lambda\rightarrow0+$. We can set $c_{0}=1$ by renormalizing the sequence.

Next, we show that $\lim_{\lambda\rightarrow0+}\frac{k_{\lambda}}{\lambda}=0$.
From $\left(  \mathcal{A}^{\lambda}-k_{\lambda}\right)  u_{\lambda}=0,$ we
have
\begin{equation}
\mathcal{A}^{0}\frac{u_{\lambda}}{\lambda}+\frac{\mathcal{A}^{\lambda
}-\mathcal{A}^{0}}{\lambda}u_{\lambda}=\frac{k_{\lambda}}{\lambda}u_{\lambda}.
\label{eqn-1st}%
\end{equation}
Taking the inner product of above with $\psi_{ex}\left(  \xi\right)  $, we
have
\[
\frac{k_{\lambda}}{\lambda}\left(  u_{\lambda},\psi_{ex}\left(  \xi\right)
\right)  =\left(  \frac{\mathcal{A}^{\lambda}-\mathcal{A}^{0}}{\lambda
}u_{\lambda},\psi_{ex}\left(  \xi\right)  \right)  :=m\left(  \lambda\right)
.
\]
We compute the integral $m\left(  \lambda\right)  $ in the physical space, by
the change of variable $\xi\rightarrow x$. Denote by $\left\langle
\ ,\right\rangle $ the inner product in $L^{2}\left(  \mathcal{S}_{e}\right)
$. Noting that $dx=b\left(  \xi\right)  \ d\xi$, we have
\begin{align*}
m\left(  \lambda\right)   &  =\left\langle \frac{\mathcal{A}_{e}^{\lambda
}-\mathcal{A}_{e}^{0}}{\lambda}u_{\lambda}\left(  x\right)  ,\psi_{ex}\left(
x\right)  \right\rangle \\
&  =-\left\langle \frac{1}{\lambda+\mathcal{D}}\frac{P_{ey}}{\psi_{ey}^{2}%
}u_{\lambda},\psi_{ex}\right\rangle -\left\langle \frac{P_{ey}}{\psi_{ey}%
}\frac{1}{\lambda+\mathcal{D}}\frac{1}{\psi_{ey}}u_{\lambda},\psi
_{ex}\right\rangle \\
&  \,\ \ \ \ +\left\langle \frac{1}{\lambda+\mathcal{D}}\frac{P_{ey}}%
{\psi_{ey}}\mathcal{E}^{\lambda,+}\frac{1}{\psi_{ey}}u_{\lambda},\psi
_{ex}\right\rangle \\
&  =m_{1}+m_{1}+m_{3},
\end{align*}
where we use
\begin{align}
\frac{\mathcal{A}_{e}^{\lambda}-\mathcal{A}_{e}^{0}}{\lambda}  &  =\frac
{1}{\lambda}\left(  \mathcal{C}^{\lambda}P_{ey}\left(  x\right)
\mathcal{C}^{\lambda}-\frac{P_{ey}}{\psi_{ey}^{2}}\right)
\label{exp-differnece-A}\\
&  =\frac{1}{\lambda}\left(  \left(  1-\frac{\mathcal{\lambda}}{\lambda
+\mathcal{D}}\right)  \frac{1}{\psi_{ey}}P_{ey}\left(  x\right)  \left(
1-\frac{\mathcal{\lambda}}{\lambda+\mathcal{D}}\right)  \frac{1}{\psi_{ey}%
}-\frac{P_{ey}}{\psi_{ey}^{2}}\right) \nonumber\\
&  =-\frac{1}{\lambda+\mathcal{D}}\frac{P_{ey}}{\psi_{ey}^{2}}-\frac{P_{ey}%
}{\psi_{ey}}\frac{1}{\lambda+\mathcal{D}}\frac{1}{\psi_{ey}}+\frac{1}%
{\lambda+\mathcal{D}}\frac{P_{ey}}{\psi_{ey}}\mathcal{E}^{\lambda,+}\frac
{1}{\psi_{ey}}.\nonumber
\end{align}
We compute each term separately. For the first term, when $\lambda
\rightarrow0+,$
\begin{align*}
m_{1}\left(  \lambda\right)   &  =\left\langle \frac{1}{\lambda+\mathcal{D}%
}\frac{P_{ey}}{\psi_{ey}^{2}}u_{\lambda},\psi_{ey}\eta_{ex}\right\rangle
=\left\langle \frac{P_{ey}}{\psi_{ey}}u_{\lambda},\frac{1}{\lambda
-\mathcal{D}}\eta_{ex}\right\rangle \\
&  =\left\langle \frac{P_{ey}}{\psi_{ey}}u_{\lambda},\left(  \mathcal{E}%
^{\lambda,-}-1\right)  \frac{1}{\psi_{ey}}\eta_{e}\right\rangle \rightarrow
\left\langle \frac{P_{ey}}{\psi_{ey}}\psi_{ex},\frac{1}{\psi_{ey}}\eta
_{e}\right\rangle =0,\text{ }%
\end{align*}
where we use Lemma \ref{lemma-e-lb} (b) in the above and the resultant
integral is zero because $P_{ey},\psi_{ey},\eta_{e}$ are even and $\psi_{ex}$
is odd in $x$. The second term is
\begin{align*}
m_{2}\left(  \lambda\right)   &  =\left\langle \frac{P_{ey}}{\psi_{ey}}%
\frac{1}{\lambda+\mathcal{D}}\frac{1}{\psi_{ey}}u_{\lambda},\psi_{ey}\eta
_{ex}\right\rangle =\left\langle \frac{1}{\lambda+\mathcal{D}}\frac{1}%
{\psi_{ey}}u_{\lambda},\psi_{ey}\frac{d}{dx}\left(  \phi_{ex}\right)
\right\rangle \\
&  =\left\langle u_{\lambda},\frac{1}{\lambda-\mathcal{D}}\frac{d}{dx}\left(
u_{e}\right)  \right\rangle =\left\langle u_{\lambda},\left(  \mathcal{E}%
^{\lambda,-}-1\right)  \frac{1}{\psi_{ey}}u_{e}\right\rangle \\
&  \rightarrow-\left\langle \psi_{ex},\frac{1}{\psi_{ey}}u_{e}\right\rangle
=0,
\end{align*}
where the relations (\ref{relation-1}) and (\ref{relation-2-P}) are used in
the above computation. For the last term,
\begin{align*}
m_{3}\left(  \lambda\right)   &  =-\left\langle \frac{1}{\lambda+\mathcal{D}%
}\frac{P_{ey}}{\psi_{ey}}\mathcal{E}^{\lambda,+}\frac{1}{\psi_{ey}}u_{\lambda
},\psi_{ey}\eta_{ex}\right\rangle =-\left\langle P_{ey}\mathcal{E}^{\lambda
,+}\frac{1}{\psi_{ey}}u_{\lambda},\frac{1}{\lambda-\mathcal{D}}\eta
_{ex}\right\rangle \\
&  =\left\langle P_{ey}\mathcal{E}^{\lambda,+}\frac{1}{\psi_{ey}}u_{\lambda
},\left(  \mathcal{E}^{\lambda,-}-1\right)  \frac{1}{\psi_{ey}}\eta
_{e}\right\rangle \rightarrow0,
\end{align*}
because $\mathcal{E}^{\lambda,+}\frac{1}{\psi_{ey}}u_{\lambda}\rightarrow0$
weakly in $L^{2}$, when $\lambda\rightarrow0+$. So $m\left(  \lambda\right)
\rightarrow0$, and thus
\[
\lim_{\lambda\rightarrow0+}\frac{k_{\lambda}}{\lambda}=\lim_{\lambda
\rightarrow0+}\frac{m\left(  \lambda\right)  }{\left(  u_{\lambda},\psi
_{ex}\left(  \xi\right)  \right)  }=0.
\]

Now we write $u_{\lambda}=c_{\lambda}\psi_{ex}+\lambda v_{\lambda}$, with
$c_{\lambda}=\left(  u_{\lambda},\psi_{ex}\right)  /\left(  \psi_{ex}%
,\psi_{ex}\right)  $. Then $\left(  v_{\lambda},\psi_{ex}\right)  =0$ and
$c_{\lambda}\rightarrow1$ as $\lambda\rightarrow0+$. We claim that:
$\left\Vert v_{\lambda}\right\Vert _{L_{e}^{2}}\leq C$ (independent of
$\lambda$). Suppose otherwise, there exists a sequence $\lambda_{n}%
\rightarrow0+$ such that $\left\Vert v_{\lambda_{n}}\right\Vert _{L_{e}^{2}%
}\geq n$. Denote $\tilde{v}_{\lambda_{n}}=v_{\lambda_{n}}/\left\Vert
v_{\lambda_{n}}\right\Vert _{L_{e}^{2}}$. Then $\left\Vert \tilde{v}%
_{\lambda_{n}}\right\Vert _{L_{e}^{2}}=1$ and $\tilde{v}_{\lambda_{n}}$
satisfies
\begin{equation}
\mathcal{A}^{\lambda_{n}}\tilde{v}_{\lambda_{n}}=\frac{1}{\left\Vert \tilde
{v}_{\lambda_{n}}\right\Vert _{L_{e}^{2}}}\left(  \frac{k_{\lambda_{n}}%
}{\lambda_{n}}u_{\lambda_{n}}-c_{\lambda_{n}}\frac{\mathcal{A}^{\lambda_{n}%
}-\mathcal{A}^{0}}{\lambda_{n}}\psi_{ex}\left(  \xi\right)  \right)  .
\label{eqn-vn-tilt}%
\end{equation}
Denote%
\[
g_{n}\left(  \xi\right)  =\frac{\mathcal{A}^{\lambda_{n}}-\mathcal{A}^{0}%
}{\lambda_{n}}\psi_{ex}\left(  \xi\right)  =b\left(  \xi\right)
w_{\lambda_{n}}\left(  x\left(  \xi\right)  \right)  ,
\]
where
\begin{align}
w_{\lambda}\left(  x\right)   &  =\frac{\mathcal{A}_{e}^{\lambda}%
-\mathcal{A}_{e}^{0}}{\lambda}\psi_{ex}\left(  x\right) \label{exp-w-lb}\\
&  =-\left(  -\frac{1}{\lambda+\mathcal{D}}\frac{P_{ey}}{\psi_{ey}^{2}}%
-\frac{P_{ey}}{\psi_{ey}}\frac{1}{\lambda+\mathcal{D}}\frac{1}{\psi_{ey}%
}+\mathcal{E}^{\lambda,+}\frac{P_{ey}}{\psi_{ey}}\frac{1}{\lambda+\mathcal{D}%
}\frac{1}{\psi_{ey}}\right)  \psi_{ey}\eta_{ex}\nonumber\\
&  =\frac{1}{\lambda+\mathcal{D}}\frac{d}{dx}\left(  u_{e}\right)
+\frac{P_{ey}}{\psi_{ey}}\frac{1}{\lambda+\mathcal{D}}\eta_{ex}-\mathcal{E}%
^{\lambda,+}\frac{P_{ey}}{\psi_{ey}}\frac{1}{\lambda+\mathcal{D}}\eta
_{ex}\nonumber\\
&  =\left(  1-\mathcal{E}^{\lambda,+}\right)  \frac{u_{e}}{\psi_{ey}}%
+\frac{P_{ey}}{\psi_{ey}}\left(  1-\mathcal{E}^{\lambda,+}\right)  \frac
{\eta_{e}}{\psi_{ey}}-\mathcal{E}^{\lambda,+}\frac{P_{ey}}{\psi_{ey}}\left(
1-\mathcal{E}^{\lambda,+}\right)  \frac{\eta_{e}}{\psi_{ey}}.\nonumber
\end{align}
By Lemma \ref{lemma-e-lb}, $\left\Vert w_{\lambda}\right\Vert _{L^{2}%
(\mathcal{S}_{e})}\leq C$ (independent of $\lambda$), and moreover
\begin{equation}
w_{\lambda}\left(  x\right)  \rightarrow\frac{u_{e}}{\psi_{ey}}+\frac
{P_{ey}\eta_{e}}{\psi_{ey}^{2}}\text{ strongly in }L^{2}(\mathcal{S}%
_{e}),\ \text{when }\lambda\rightarrow0+\text{.} \label{limit-w-lb}%
\end{equation}
So $\left\Vert g_{n}\right\Vert _{L^{2}}\leq C\ $and thus by applying the
estimate (\ref{apriori-small-lb}) to (\ref{eqn-vn-tilt}), we have $\left\Vert
\tilde{v}_{\lambda_{n}}\right\Vert _{H^{\frac{1}{2}}}\leq C$. Therefore, as
before, $\tilde{v}_{\lambda_{n}}\rightarrow$ $\tilde{v}_{0}\neq0$ weakly in
$H^{\frac{1}{2}}$. Since $\frac{k_{\lambda_{n}}}{\lambda_{n}},\frac
{1}{\left\Vert \tilde{v}_{\lambda_{n}}\right\Vert _{L_{e}^{2}}}\rightarrow0$,
we have $\mathcal{A}^{0}\tilde{v}_{0}=0$. So $\tilde{v}_{0}=c_{1}\psi
_{ex}\left(  \xi\right)  $ for some $c_{1}\neq0$. But $\left(  \tilde
{v}_{\lambda_{n}},\psi_{ex}\left(  \xi\right)  \right)  =0$ implies that
$\left(  \tilde{v}_{0},\psi_{ex}\left(  \xi\right)  \right)  =0,\ $a
contradiction. This proves that $\left\Vert v_{\lambda}\right\Vert _{L_{e}%
^{2}}\leq C$. The equation satisfied by $v_{\lambda}$ is
\[
\mathcal{A}^{\lambda}v_{\lambda}=\frac{k_{\lambda}}{\lambda_{n}}u_{\lambda
}-c_{\lambda}\frac{\mathcal{A}^{\lambda}-\mathcal{A}^{0}}{\lambda}\psi
_{ex}\left(  \xi\right)  =\frac{k_{\lambda}}{\lambda_{n}}u_{\lambda
}-c_{\lambda}b\left(  \xi\right)  w_{\lambda}\left(  x\left(  \xi\right)
\right)  .
\]
Applying Lemma \ref{lemma-apriori} to the above equation, we have $\left\Vert
v_{\lambda}\right\Vert _{H^{\frac{1}{2}}}\leq C$ and thus $v_{\lambda
}\rightarrow$ $v_{0}$ weakly in $H^{\frac{1}{2}}$. By (\ref{limit-w-lb}),
$v_{0}$ satisfies
\[
\mathcal{A}^{0}v_{0}=-b\left(  \xi\right)  \left(  \frac{u_{e}}{\psi_{ey}%
}+\frac{P_{ey}\eta_{e}}{\psi_{ey}^{2}}\right)  \left(  \xi\right)  .
\]
It is shown in the Appendix that
\begin{equation}
\mathcal{A}_{e}^{0}\partial_{c}\bar{\psi}_{e}\left(  x\right)  =-\left(
\frac{u_{e}}{\psi_{ey}}+\frac{P_{ey}\eta_{e}}{\psi_{ey}^{2}}\right)  \left(
x\right)  , \label{eqn-d-phi-dc}%
\end{equation}
and equivalently,
\[
\mathcal{A}^{0}\partial_{c}\bar{\psi}_{e}\left(  \xi\right)  =-b\left(
\xi\right)  \left(  \frac{u_{e}}{\psi_{ey}}+\frac{P_{ey}\eta_{e}}{\psi
_{ey}^{2}}\right)  \left(  \xi\right)  ,
\]
where
\[
\bar{\psi}_{e}\left(  x,y\right)  =\psi_{e}\left(  x,y\right)  -cy.
\]
So $\mathcal{A}^{0}\left(  v_{0}-\partial_{c}\bar{\psi}_{e}\left(  \xi\right)
\right)  =0$. Since $\left(  v_{0},\psi_{ex}\left(  \xi\right)  \right)
=\lim_{\lambda\rightarrow0+}\left(  v_{\lambda},\psi_{ex}\left(  \xi\right)
\right)  =0,$ we have
\[
v_{0}=\partial_{c}\bar{\psi}_{e}\left(  \xi\right)  +d_{0}\psi_{ex}\left(
\xi\right)  ,\ d_{0}=-\left(  \partial_{c}\bar{\psi}_{e},\psi_{ex}\right)
\ /\left\Vert \psi_{ex}\right\Vert _{L^{2}}^{2}.
\]
By the same argument as in the proof of $\left\Vert u_{\lambda}-u_{0}%
\right\Vert _{H^{\frac{1}{2}}}\rightarrow0$, we have $\left\Vert v_{\lambda
}-v_{0}\right\Vert _{H^{\frac{1}{2}}}\rightarrow0.$ We rewrite
\[
u_{\lambda}=c_{\lambda}\psi_{ex}+\lambda v_{\lambda}=\bar{c}_{\lambda}%
\psi_{ex}+\lambda\bar{v}_{\lambda},
\]
where $\bar{c}_{\lambda}=c_{\lambda}+\lambda d_{0}$, $\bar{v}_{\lambda
}=v_{\lambda}-d_{0}\psi_{ex}$. Then $\bar{c}_{\lambda}\rightarrow1$, $\bar
{v}_{\lambda}\rightarrow\partial_{c}\bar{\psi}_{e}\left(  \xi\right)  \ $when
$\lambda\rightarrow0+$.

Now we compute $\lim_{\lambda\rightarrow0+}\frac{k_{\lambda}}{\lambda^{2}}$.
From (\ref{eqn-1st}), we have
\[
\mathcal{A}^{0}\frac{u_{\lambda}}{\lambda^{2}}+\frac{\mathcal{A}^{\lambda
}-\mathcal{A}^{0}}{\lambda}\left(  \frac{\bar{c}_{\lambda}}{\lambda}\psi
_{ex}+\bar{v}_{\lambda}\right)  =\frac{k_{\lambda}}{\lambda^{2}}u_{\lambda}.
\]
Taking the inner product of above with $\psi_{ex}\left(  \xi\right)  $, we
have%
\begin{align*}
\frac{k_{\lambda}}{\lambda^{2}}\left(  u_{\lambda},\psi_{ex}\left(
\xi\right)  \right)   &  =\bar{c}_{\lambda}\left(  \frac{\mathcal{A}^{\lambda
}-\mathcal{A}^{0}}{\lambda^{2}}\psi_{ex},\psi_{ex}\right)  +\left(
\frac{\mathcal{A}^{\lambda}-\mathcal{A}^{0}}{\lambda}\bar{v}_{\lambda}%
,\psi_{ex}\right) \\
&  =\bar{c}_{\lambda}I_{1}+I_{2}.
\end{align*}
Again, we do the computations in the physical space. For the first term, we
use (\ref{exp-w-lb}) to get
\begin{align*}
I_{1}  &  =\left\langle \frac{\mathcal{A}_{e}^{\lambda}-\mathcal{A}_{e}^{0}%
}{\lambda^{2}}\psi_{ex}\left(  x\right)  ,\psi_{ex}\left(  x\right)
\right\rangle =\left\langle \frac{w_{\lambda}\left(  x\right)  }{\lambda}%
,\psi_{ex}\left(  x\right)  \right\rangle \\
&  =\left\langle \frac{\mathcal{D}}{\left(  \lambda+\mathcal{D}\right)
\lambda}\frac{u_{e}}{\psi_{ey}},\psi_{ex}\left(  x\right)  \right\rangle
+\left\langle \frac{P_{ey}}{\psi_{ey}}\frac{\mathcal{D}}{\left(
\lambda+\mathcal{D}\right)  \lambda}\frac{\eta_{e}}{\psi_{ey}},\psi
_{ex}\left(  x\right)  \right\rangle \\
&  \ \ \ \ \ -\left\langle \frac{1}{\lambda+\mathcal{D}}\frac{P_{ey}}%
{\psi_{ey}}\left(  1-\mathcal{E}^{\lambda,+}\right)  \frac{\eta_{e}}{\psi
_{ey}},\psi_{ex}\left(  x\right)  \right\rangle \\
&  =I_{1}^{1}+I_{1}^{2}+I_{1}^{3}\text{.}%
\end{align*}
We have
\begin{align*}
I_{1}^{1}  &  =-\left\langle \left(  \frac{1}{\lambda}-\frac{1}{\lambda
+\mathcal{D}}\right)  \frac{u_{e}}{\psi_{ey}},\psi_{ey}\eta_{ex}\right\rangle
\\
&  =-\frac{1}{\lambda}\left\langle u_{e},\eta_{ex}\right\rangle +\left\langle
\frac{1}{\lambda+\mathcal{D}}\frac{u_{e}}{\psi_{ey}},\psi_{ey}\eta
_{ex}\right\rangle =\left\langle \frac{1}{\lambda+\mathcal{D}}\frac{u_{e}%
}{\psi_{ey}},\psi_{ey}\eta_{ex}\right\rangle \text{ }\\
&  =\left\langle u_{e},\frac{1}{\lambda-\mathcal{D}}\eta_{ex}\right\rangle
=\left\langle u_{e},\left(  \mathcal{E}^{\lambda,-}-1\right)  \frac{\eta_{e}%
}{\psi_{ey}}\right\rangle \rightarrow-\left\langle u_{e},\frac{\eta_{e}}%
{\psi_{ey}}\right\rangle ,
\end{align*}%
\begin{align*}
I_{1}^{2}  &  =-\left\langle \frac{P_{ey}}{\psi_{ey}}\left(  \frac{1}{\lambda
}-\frac{1}{\lambda+\mathcal{D}}\right)  \frac{\eta_{e}}{\psi_{ey}},\psi
_{ey}\eta_{ex}\right\rangle =\left\langle P_{ey}\frac{1}{\lambda+\mathcal{D}%
}\frac{\eta_{e}}{\psi_{ey}},\eta_{ex}\right\rangle \\
&  =\left\langle \frac{1}{\lambda+\mathcal{D}}\frac{\eta_{e}}{\psi_{ey}}%
,\psi_{ey}\frac{d}{dx}\left(  u_{e}\right)  \right\rangle =\left\langle
\eta_{e},\frac{1}{\lambda-\mathcal{D}}\frac{d}{dx}\left(  u_{e}\right)
\right\rangle =\left\langle \eta_{e},\left(  \mathcal{E}^{\lambda,-}-1\right)
\frac{u_{e}}{\psi_{ey}}\right\rangle \\
&  \rightarrow-\left\langle \eta_{e},\frac{u_{e}}{\psi_{ey}}\right\rangle ,
\end{align*}
and
\begin{align*}
I_{1}^{3}  &  =\left\langle \frac{1}{\lambda+\mathcal{D}}\frac{P_{ey}}%
{\psi_{ey}}\left(  1-\mathcal{E}^{\lambda,+}\right)  \frac{\eta_{e}}{\psi
_{ey}},\psi_{ey}\eta_{ex}\right\rangle =\left\langle P_{ey}\left(
1-\mathcal{E}^{\lambda,+}\right)  \frac{\eta_{e}}{\psi_{ey}},\frac{1}%
{\lambda-\mathcal{D}}\eta_{ex}\right\rangle \\
&  =\left\langle P_{ey}\left(  1-\mathcal{E}^{\lambda,+}\right)  \frac
{\eta_{e}}{\psi_{ey}},\left(  \mathcal{E}^{\lambda,-}-1\right)  \frac{\eta
_{e}}{\psi_{ey}}\right\rangle \rightarrow-\left\langle P_{ey}\frac{\eta_{e}%
}{\psi_{ey}},\frac{\eta_{e}}{\psi_{ey}}\right\rangle .
\end{align*}
So
\[
\lim_{\lambda\rightarrow0+}I_{1}\left(  \lambda\right)  =-\left\langle
\eta_{e},\frac{2u_{e}}{\psi_{ey}}+\frac{P_{ey}}{\psi_{ey}^{2}}\eta
_{e}\right\rangle .
\]
To compute $I_{2}$, we write
\begin{align*}
I_{2}  &  =\left\langle \frac{1}{\lambda+\mathcal{D}}\frac{P_{ey}}{\psi
_{ey}^{2}}\bar{v}_{\lambda},\psi_{ey}\eta_{ex}\right\rangle +\left\langle
\frac{P_{ey}}{\psi_{ey}}\frac{1}{\lambda+\mathcal{D}}\frac{1}{\psi_{ey}}%
\bar{v}_{\lambda},\psi_{ey}\eta_{ex}\right\rangle \\
&  \ \ \ \ \ -\left\langle \frac{1}{\lambda+\mathcal{D}}\frac{P_{ey}}%
{\psi_{ey}}\mathcal{E}^{\lambda,+}\frac{1}{\psi_{ey}}\bar{v}_{\lambda}%
,\psi_{ey}\eta_{ex}\right\rangle \\
&  =I_{2}^{1}+I_{2}^{2}+I_{2}^{3}\text{. }%
\end{align*}
We have%
\[
I_{2}^{1}=\left\langle \frac{P_{ey}}{\psi_{ey}}\bar{v}_{\lambda},\frac
{1}{\lambda-\mathcal{D}}\eta_{ex}\right\rangle =\left\langle \frac{P_{ey}%
}{\psi_{ey}}\bar{v}_{\lambda},\left(  \mathcal{E}^{\lambda,-}-1\right)
\frac{\eta_{e}}{\psi_{ey}}\right\rangle \rightarrow-\left\langle \frac{P_{ey}%
}{\psi_{ey}}\partial_{c}\bar{\psi}_{e},\frac{\eta_{e}}{\psi_{ey}}\right\rangle
,
\]%
\begin{align*}
I_{2}^{2}  &  =\left\langle \frac{1}{\lambda+\mathcal{D}}\frac{1}{\psi_{ey}%
}\bar{v}_{\lambda},\psi_{ey}\frac{d}{dx}\left(  u_{e}\right)  \right\rangle
=\left\langle \bar{v}_{\lambda},\frac{1}{\lambda-\mathcal{D}}\frac{d}%
{dx}\left(  u_{e}\right)  \right\rangle \\
&  =\left\langle \bar{v}_{\lambda},\left(  -1+\mathcal{E}^{\lambda,-}\right)
\frac{u_{e}}{\psi_{ey}}\right\rangle \rightarrow-\left\langle \partial_{c}%
\bar{\psi}_{e},\frac{u_{e}}{\psi_{ey}}\right\rangle
\end{align*}
and
\begin{align*}
I_{2}^{3}  &  =-\left\langle \frac{1}{\lambda+\mathcal{D}}\frac{P_{ey}}%
{\psi_{ey}}\mathcal{E}^{\lambda,+}\frac{1}{\psi_{ey}}\bar{v}_{\lambda}%
,\psi_{ey}\eta_{ex}\right\rangle =-\left\langle P_{ey}\mathcal{E}^{\lambda
,+}\frac{1}{\psi_{ey}}\bar{v}_{\lambda},\frac{1}{\lambda-\mathcal{D}}\eta
_{ex}\right\rangle \\
&  =-\left\langle P_{ey}\mathcal{E}^{\lambda,+}\frac{1}{\psi_{ey}}\bar
{v}_{\lambda},\left(  -1+\mathcal{E}^{\lambda,-}\right)  \frac{\eta_{e}}%
{\psi_{ey}}\right\rangle \rightarrow0.
\end{align*}
So
\[
\lim_{\lambda\rightarrow0+}I_{2}\left(  \lambda\right)  =-\left\langle
\partial_{c}\bar{\psi}_{e},\frac{u_{e}}{\psi_{ey}}+\frac{P_{ey}}{\psi_{ey}%
^{2}}\eta_{e}\right\rangle .
\]
Thus
\begin{align*}
\lim_{\lambda\rightarrow0+}\frac{k_{\lambda}}{\lambda^{2}}  &  =\lim
_{\lambda\rightarrow0+}\frac{\bar{c}_{\lambda}I_{1}+I_{2}}{\left(  u_{\lambda
},\psi_{ex}\right)  }\\
&  =\frac{-\left\langle \eta_{e},\frac{2u_{e}}{\psi_{ey}}+\frac{P_{ey}}%
{\psi_{ey}^{2}}\eta_{e}\right\rangle -\left\langle \partial_{c}\bar{\psi}%
_{e},\frac{u_{e}}{\psi_{ey}}+\frac{P_{ey}}{\psi_{ey}^{2}}\eta_{e}\right\rangle
}{\left\Vert \psi_{ex}\right\Vert _{L^{2}}^{2}}.
\end{align*}
It is shown in the Appendix that
\begin{equation}
\frac{dP}{dc}=-\left\langle \eta_{e},2\frac{u_{e}}{\psi_{ey}}+\frac{P_{ey}%
}{\psi_{ey}^{2}}\eta_{e}\right\rangle -\left\langle \partial_{c}\bar{\psi}%
_{e},\frac{u_{e}}{\psi_{ey}}+\frac{P_{ey}}{\psi_{ey}^{2}}\eta_{e}\right\rangle
, \label{formula-dp-dc}%
\end{equation}
where the momentum $P$ is defined by
\[
P=\int_{\mathcal{S}_{e}}\eta_{e}\frac{d}{dx}\left(  \bar{\phi}_{e}\left(
x,\eta_{e}\left(  x\right)  \right)  \right)  dx,
\]
with $\bar{\phi}_{e}=\phi_{e}-cx$. It is shown in \cite{benjamin73} (see also
\cite{long-soli-74}) that for a solitary wave solution
\[
\frac{dE}{dc}=-c\frac{dP}{dc},
\]
where we note that the travel direction considered in this paper is opposite
to the one in the above references. A combination of above results yields the
formula (\ref{formula-moving}).
\end{proof}

As a corollary of the above proof, we show Theorem \ref{thm-transition}.

\begin{proof}
[Proof of Theorem \ref{thm-transition}]The proof is very similar to that of
Lemma \ref{lemma-moving}, so we only sketch it. The main difference is that
the computations depend on the parameter $\mu_{n}$. We use $\eta_{e,n}%
,\psi_{ey,n}$ etc. to denote the dependence on $\mu_{n}$, and $\eta_{e}%
,\psi_{ey}$ etc. for quantities depending on $\mu_{0}$.$\ $Denote
$u_{n}\left(  \xi\right)  =\mathcal{B}_{n}\left(  \psi_{n}|_{\mathcal{S}%
_{e,n}}\right)  $. Then $\mathcal{A}_{n}^{\lambda_{n}}u_{n}=0$ and we
normalize $u_{n}\ $by$\ \left\Vert u_{n}\right\Vert _{L_{e_{n}}^{2}}=1$.
First, we show that $\mathcal{\tilde{E}}_{n}^{\lambda_{n},\pm}\rightarrow0$
strongly in $L^{2}\left(  \mathbf{R}\right)  $. Indeed, for any $v\in
L^{2}\left(  \mathbf{R}\right)  $, since the norms $\left\Vert \cdot
\right\Vert _{L^{2}}$ and $\left\Vert \cdot\right\Vert _{L_{b_{n}\psi_{ey,n}%
}^{2}}$ are equivalent, we have
\begin{align*}
\left\Vert \mathcal{\tilde{E}}_{n}^{\lambda_{n},\pm}v\right\Vert _{L^{2}}^{2}
&  \leq C\left\Vert \mathcal{\tilde{E}}_{n}^{\lambda_{n},\pm}v\right\Vert
_{L_{b_{n}\psi_{ey,n}}^{2}}^{2}=C\int\frac{\lambda_{n}^{2}}{\lambda_{n}%
^{2}+\alpha^{2}}d\Vert\tilde{M}_{\alpha,n}v\Vert_{L_{b_{n}\psi_{ey,n}}^{2}%
}^{2}\\
&  \leq C\frac{\lambda_{n}^{2}}{\delta^{2}}\left\Vert v\right\Vert
_{L_{b_{n}\psi_{ey,n}}^{2}}^{2}+C\int_{\left\vert \alpha\right\vert \leq
\delta}d\Vert\tilde{M}_{\alpha,n}v\Vert_{L_{b_{n}\psi_{ey,n}}^{2}}^{2}%
=A_{1}+A_{2},
\end{align*}
where $\delta>0$ is arbitrary and $\left\{  \tilde{M}_{\alpha,n};\alpha
\in\mathbf{R}^{1}\right\}  $ is the spectral measure of the self-adjoint
operator $\tilde{R}_{n}=-i\mathcal{\tilde{D}}_{n}$ on $L_{b_{n}\psi_{ey,n}%
}^{2}$. For any $\varepsilon>0$, since $M_{\{0\}}=0$ we can choose $\delta$
small enough such that
\[
\int_{\left\vert \alpha\right\vert \leq\delta}d\Vert\tilde{M}_{\alpha}%
v\Vert_{L_{b\psi_{ey}}^{2}}^{2}\leq\frac{\varepsilon}{2}.
\]
Since the measure $d\Vert\tilde{M}_{\alpha,n}v\Vert_{L_{b_{n}\psi_{ey,n}}^{2}%
}^{2}$ converges to $d\Vert\tilde{M}_{\alpha}v\Vert_{L_{b\psi_{ey}}^{2}}^{2}$,
we have $A_{2}\leq\varepsilon\ $when $n$ is big enough. The term $A_{1}$ tends
to zero when $n\rightarrow\infty.$ Because $\varepsilon$ can be arbitrarily
small, we have
\[
\lim_{n\rightarrow\infty}\left\Vert \mathcal{\tilde{E}}_{n}^{\lambda_{n},\pm
}v\right\Vert _{L^{2}}^{2}=0.
\]
So $\mathcal{A}_{n}^{\lambda_{n}}\rightarrow\mathcal{A}^{0}$ strongly and
similar to the proof of Lemma \ref{lemma-moving}, we have $u_{n}%
\rightarrow\psi_{ex}\left(  \xi\right)  $ in $H^{\frac{1}{2}}\left(
\mathbf{R}\right)  $ by a renormalization. We write $u_{n}=c_{n}\psi
_{ex,n}\left(  \xi\right)  +\lambda_{n}v_{n},$with $c_{n}=\left(  u_{n}%
,\psi_{ex,n}\right)  /\left(  \psi_{ex,n},\psi_{ex,n}\right)  $. Then
$c_{n}\rightarrow1$ and $\left(  v_{n},\psi_{ex,n}\right)  =0$. As before, we
have $v_{n}\rightarrow v_{0}\ $in $H^{\frac{1}{2}}\left(  \mathbf{R}\right)
$, where
\[
v_{0}=\partial_{c}\bar{\psi}_{e}\left(  \xi\right)  +d_{0}\psi_{ex}\left(
\xi\right)  ,\ d_{0}=-\left(  \partial_{c}\bar{\psi}_{e},\psi_{ex}\right)
\ /\left(  \psi_{ex},\psi_{ex}\right)  .
\]
We rewrite
\[
u_{n}=c_{n}\psi_{ex,n}\left(  \xi\right)  +\lambda_{n}v_{n}=\bar{c}_{n}%
\psi_{ex,n}+\lambda_{n}\bar{v}_{n},
\]
where $\bar{c}_{n}=c_{n}+\lambda_{n}d_{0}$ and $\bar{v}_{n}=v_{n}-d_{0}%
\psi_{ex,n}$. Then $\bar{c}_{n}\rightarrow1$, $\bar{v}_{n}\rightarrow
\partial_{c}\bar{\psi}_{e}\left(  \xi\right)  \ $when $n\rightarrow\infty$.
Similarly, we have
\[
0=\bar{c}_{n}\left(  \frac{\mathcal{A}_{n}^{\lambda_{n}}-\mathcal{A}_{n}^{0}%
}{\lambda_{n}^{2}}\psi_{ex,n},\psi_{ex,n}\right)  +\left(  \frac
{\mathcal{A}_{n}^{\lambda_{n}}-\mathcal{A}_{n}^{0}}{\lambda_{n}}\bar{v}%
_{n},\psi_{ex,n}\right)  =\bar{c}_{n}I_{1}+I_{2}.
\]
As before, the calculations of $I_{1},I_{2}$ are first done in the physical
space $\mathcal{S}_{e,n}$ with the inner product $\left\langle
.,.\right\rangle _{n}$. We use the same notations as in the proof of Lemma
\ref{lemma-moving}. By the same computations, we have
\begin{align*}
I_{1}^{1}  &  =\left\langle u_{e,n},\left(  -1+\mathcal{E}_{n}^{\lambda_{n}%
,-}\right)  \frac{\eta_{e,n}}{\psi_{ey,n}}\right\rangle _{n}=\left(
b_{n}u_{e,n}\left(  \xi\right)  ,\left(  -1+\mathcal{\tilde{E}}_{n}%
^{\lambda_{n},-}\right)  \frac{\eta_{e,n}}{\psi_{ey,n}}\left(  \xi\right)
\right) \\
&  \rightarrow-\left(  bu_{e}\left(  \xi\right)  ,\frac{\eta_{e}}{\psi_{ey}%
}\left(  \xi\right)  \right)  =-\left\langle u_{e},\frac{\eta_{e}}{\psi_{ey}%
}\right\rangle .
\end{align*}
The other terms are handled similarly. So finally we have
\begin{align*}
0  &  =\lim_{n\rightarrow\infty}\bar{c}_{n}I_{1}+I_{2}=-\left\langle \eta
_{e},2\frac{u_{e}}{\psi_{ey}}+\frac{P_{ey}}{\psi_{ey}^{2}}\eta_{e}%
\right\rangle -\left\langle \partial_{c}\bar{\psi}_{e},\frac{u_{e}}{\psi_{ey}%
}+\frac{P_{ey}}{\psi_{ey}^{2}}\eta_{e}\right\rangle \\
&  =-\frac{1}{c}\frac{dE}{dc}|_{\mu_{0}}.
\end{align*}
So $E^{\prime}\left(  \mu_{0}\right)  =0$. This finishes the proof of Theorem
\ref{thm-transition}.
\end{proof}

\section{Appendix}

In this Appendix, we prove (\ref{eqn-d-phi-dc}) and (\ref{formula-dp-dc}).

\begin{proof}
[Proof of (\ref{eqn-d-phi-dc})]We derive (\ref{eqn-d-phi-dc}) from the
linearized system (\ref{eqn-L}) to avoid working on the parameter dependent
fluid domains and wave profiles. Note that (\ref{eqn-L}) describes the
evolution of the first order variations of the wave profile and the stream
function in the travelling frame of the basic wave. The basic wave is $\left(
\eta_{e}(x;c),\bar{\psi}_{e}(x,y;c)\right)  $ in its travelling frame $\left(
x+ct,y,t\right)  $. Here, the stream function $\bar{\psi}_{e}\ $and the
relative stream function$\ \psi_{e}$ are related by $\bar{\psi}_{e}=\psi
_{e}-cy,\ $and thus $\bar{\psi}_{e}\rightarrow0$ when $\left\vert x\right\vert
\rightarrow\infty$. As an example to illustrate the ideas, we first consider a
perturbed solution with a trivial translation $\left(  \eta_{e}(x+\delta
x;c),\bar{\psi}_{e}(x+\delta x,y;c)\right)  $. The first order variations
$\delta x\left(  \eta_{ex},\bar{\psi}_{ex}\right)  $ and $\delta xP_{ex}$
satisfy the linearized system (\ref{eqn-L}), so we have
\begin{subequations}
\begin{gather*}
\Delta\bar{\psi}_{ex}=0\qquad in\quad\mathcal{D}_{e},\\
\frac{d}{dx}(\psi_{ey}\eta_{ex})+\frac{d}{dx}\bar{\psi}_{ex}=0\qquad
\text{on}\quad\mathcal{S}_{e};\\
P_{ex}+P_{ey}\eta_{ex}=0\qquad\text{on}\quad\mathcal{S}_{e};\\
\frac{d}{dx}(\psi_{ey}\partial_{n}\bar{\psi}_{ex})+\frac{d}{dx}P_{ex}%
=0\qquad\text{on}\quad\mathcal{S}_{e};\\
\bar{\psi}_{ex}=0\qquad\text{on}\quad\{y=-h\},
\end{gather*}
from which we get $\mathcal{A}_{e}^{0}\bar{\psi}_{ex}\left(  x\right)  =0$.
Since $\bar{\psi}_{ex}=\psi_{ex}$, this recovers $\mathcal{A}_{e}^{0}\psi
_{ex}=0$ which is proved in Lemma \ref{lemma-A0-property}. The solitary wave
with the speed $c+\delta c$ is given by
\end{subequations}
\[
\left(  \eta_{e}(x+\delta ct;c+\delta c),\bar{\psi}_{e}(x+\delta ct,y;c+\delta
c)\right)
\]
in the $\left(  x+ct,y,t\right)  $ frame. The first order variation are
\begin{align*}
&  \left(  \eta_{e}(x+\delta ct;c+\delta c),\bar{\psi}_{e}(x+\delta
ct,y;c+\delta c)\right)  -\left(  \eta_{e}(x;c),\bar{\psi}_{e}(x,y;c)\right)
\\
&  =\delta c\left(  \eta_{ex}(x;c)t+\partial_{c}\eta_{e}(x;c),\bar{\psi}%
_{ex}(x,y;c)t+\partial_{c}\bar{\psi}_{e}(x,y;c)\right)  .
\end{align*}
So
\[
\left(  \eta_{ex}(x;c)t+\partial_{c}\eta_{e}(x;c),\bar{\psi}_{ex}%
(x,y;c)t+\partial_{c}\bar{\psi}_{e}(x,y;c)\right)
\]
and $P_{ex}(x,y;c)t+\partial_{c}P_{e}(x,y;c)\ $satisfy the linearized system
(\ref{eqn-L}). By using the above linear system for $\left(  \eta_{ex}%
,\bar{\psi}_{ex}\right)  $, we get%
\[
\Delta\partial_{c}\bar{\psi}_{e}=0\qquad in\quad\mathcal{D}_{e},
\]%
\begin{equation}
\eta_{ex}+\frac{d}{dx}(\psi_{ey}\partial_{c}\eta_{e})+\frac{d}{dx}\left(
\partial_{c}\bar{\psi}_{e}\right)  =0\qquad\text{on}\quad\mathcal{S}_{e};
\label{eqn-eta-c}%
\end{equation}%
\begin{equation}
\partial_{c}P_{e}+P_{ey}\partial_{c}\eta_{e}=0\qquad\text{on}\quad
\mathcal{S}_{e}; \label{eqn-P-c}%
\end{equation}%
\begin{equation}
\partial_{n}\bar{\psi}_{ex}+\frac{d}{dx}(\psi_{ey}\partial_{n}\left(
\partial_{c}\bar{\psi}_{e}\right)  )+\frac{d}{dx}\left(  \partial_{c}%
P_{e}\right)  =0\qquad\text{on}\quad\mathcal{S}_{e}; \label{eqn-phi-n-c}%
\end{equation}%
\[
\partial_{c}\bar{\psi}_{e}=0\qquad\text{on}\quad\{y=-h\}.
\]
From (\ref{eqn-eta-c}),
\begin{equation}
\partial_{c}\eta_{e}=-\frac{1}{\psi_{ey}}\left(  \partial_{c}\bar{\psi}%
_{e}+\eta_{e}\right)  . \label{intern-dc-eta}%
\end{equation}
Since $\partial_{n}\bar{\psi}_{ex}=\frac{d}{dx}\left(  \bar{\phi}_{ex}\right)
=\frac{d}{dx}\left(  u_{e}\right)  $ and $u_{e}\rightarrow0$ when $\left\vert
x\right\vert \rightarrow\infty,\ $combining (\ref{intern-dc-eta}) with
(\ref{eqn-phi-n-c}) and (\ref{eqn-P-c}), we have
\begin{equation}
\partial_{n}\left(  \partial_{c}\bar{\psi}_{e}\right)  =-\frac{1}{\psi_{ey}%
}\left(  u_{e}-P_{ey}\partial_{c}\eta_{e}\right)  =-\frac{u_{e}}{\psi_{ey}%
}-\frac{P_{ey}\eta_{e}}{\psi_{ey}^{2}}-\frac{P_{ey}}{\psi_{ey}^{2}}%
\partial_{c}\bar{\psi}_{e}. \label{interm-dndc-phi}%
\end{equation}
So
\[
\partial_{n}\left(  \partial_{c}\bar{\psi}_{e}\right)  +\frac{P_{ey}}%
{\psi_{ey}^{2}}\partial_{c}\bar{\psi}_{e}=-\frac{u_{e}}{\psi_{ey}}%
-\frac{P_{ey}\eta_{e}}{\psi_{ey}^{2}},
\]
that is,
\[
\mathcal{A}_{e}^{0}\partial_{c}\bar{\psi}_{e}=-\frac{u_{e}}{\psi_{ey}}%
-\frac{P_{ey}\eta_{e}}{\psi_{ey}^{2}}.
\]
Lastly, we show that at a turning point $\mu_{0},$ $\mathcal{A}_{e}%
^{0}\partial_{\mu}\bar{\psi}_{e}=0$. Indeed, the first order variation of
\[
\left(  \eta_{e}(x+\delta ct;\mu_{0}+\delta\mu),\bar{\psi}_{e}(x+\delta
ct,y;\mu_{0}+\delta\mu)\right)  -\left(  \eta_{e}(x;\mu_{0}),\bar{\psi}%
_{e}(x,y;\mu_{0})\right)
\]
is $\delta\mu\left(  \partial_{\mu}\eta_{e},\partial_{\mu}\bar{\psi}%
_{e}\right)  $, since $\delta c=O\left(  \left\vert \delta\mu\right\vert
^{2}\right)  $ is of higher order. So $\left(  \partial_{\mu}\eta_{e}%
,\partial_{\mu}\bar{\psi}_{e}\right)  $ satisfies the same system for $\left(
\eta_{ex},\bar{\psi}_{ex}\right)  $, which yields $\mathcal{A}_{e}^{0}%
\partial_{\mu}\bar{\psi}_{e}=0$.
\end{proof}

\begin{proof}
[Proof of (\ref{formula-dp-dc})]We have
\[
P\left(  c\right)  =\int_{\mathcal{S}_{e}}\eta_{e}\frac{d}{dx}\left(
\bar{\phi}_{e}\right)  dx=\int\eta_{e}\left(  x;c\right)  \partial_{n}%
\bar{\psi}_{e}\left(  \left(  x,\eta_{e}\left(  x;c\right)  ;c\right)
\right)  dx
\]
Since
\begin{align*}
\ \ \  &  \ \ \ \ \ \frac{d}{dc}\left(  \partial_{n}\bar{\psi}_{e}\left(
x,\eta_{e}\left(  x;c\right)  ;c\right)  \right) \\
&  =\frac{d}{dc}\left(  \bar{\psi}_{ey}\left(  x,\eta_{e}\left(  x;c\right)
;c\right)  -\eta_{ex}\left(  x;c\right)  \bar{\psi}_{ex}\left(  x,\eta
_{e}\left(  x;c\right)  ;c\right)  \right) \\
&  =\partial_{y}\left(  \partial_{c}\bar{\psi}_{e}\right)  +\bar{\psi}%
_{eyy}\partial_{c}\eta_{e}-\partial_{x}\left(  \partial_{c}\eta_{e}\right)
\bar{\psi}_{ex}-\eta_{ex}\left(  \partial_{x}\left(  \partial_{c}\bar{\psi
}_{e}\right)  +\bar{\psi}_{exy}\partial_{c}\eta_{e}\right) \\
&  =\left(  \partial_{y}\left(  \partial_{c}\bar{\psi}_{e}\right)  -\eta
_{ex}\partial_{x}\left(  \partial_{c}\bar{\psi}_{e}\right)  \right)
-\partial_{c}\eta_{e}\left(  \bar{\psi}_{exx}+\eta_{ex}\bar{\psi}%
_{exy}\right)  -\partial_{x}\left(  \partial_{c}\eta_{e}\right)  \bar{\psi
}_{ex}\\
&  =\partial_{n}\left(  \partial_{c}\bar{\psi}_{e}\right)  -\partial_{c}%
\eta_{e}\frac{d}{dx}\left(  \bar{\psi}_{ex}\right)  -\frac{d}{dx}\left(
\partial_{c}\eta_{e}\right)  \bar{\psi}_{ex}=\partial_{n}\left(  \partial
_{c}\bar{\psi}_{e}\right)  -\frac{d}{dx}\left(  \partial_{c}\eta_{e}\bar{\psi
}_{ex}\right)  ,
\end{align*}
we have
\begin{align*}
\frac{dP}{dc}  &  =\int\left\{  \partial_{c}\eta_{e}\partial_{n}\bar{\psi}%
_{e}+\eta_{e}\frac{d}{dc}\left(  \partial_{n}\bar{\psi}_{e}\right)  \right\}
dx\\
&  =\int\left\{  \partial_{c}\eta_{e}\partial_{n}\bar{\psi}_{e}+\eta
_{e}\left(  \partial_{n}\left(  \partial_{c}\bar{\psi}_{e}\right)  -\frac
{d}{dx}\left(  \partial_{c}\eta_{e}\bar{\psi}_{ex}\right)  \right)  \right\}
dx\\
&  =\int\left\{  (\partial_{c}\eta_{e}\left(  u_{e}-\eta_{ex}\bar{\psi}%
_{ex}\right)  +\eta_{e}\partial_{n}\left(  \partial_{c}\bar{\psi}_{e}\right)
+\eta_{ex}\partial_{c}\eta_{e}\bar{\psi}_{ex}\right\}  dx\text{ }\\
&  =\int\left\{  u_{e}\partial_{c}\eta_{e}+\eta_{e}\partial_{n}\left(
\partial_{c}\bar{\psi}_{e}\right)  \right\}  dx\\
&  =\left\langle u_{e},-\frac{1}{\psi_{ey}}\left(  \partial_{c}\bar{\psi}%
_{e}+\eta_{e}\right)  \right\rangle +\left\langle \eta_{e},-\frac{u_{e}}%
{\psi_{ey}}-\frac{P_{ey}\eta_{e}}{\psi_{ey}^{2}}-\frac{P_{ey}}{\psi_{ey}^{2}%
}\partial_{c}\bar{\psi}_{e}\right\rangle \\
&  \text{(by equations (\ref{intern-dc-eta}) and (\ref{interm-dndc-phi}))}\\
&  =-\left\langle \eta_{e},\frac{2 u_{e}}{\psi_{ey}}+\frac{P_{ey}}{\psi
_{ey}^{2}}\eta_{e}\right\rangle -\left\langle \partial_{c}\bar{\psi}_{e}%
,\frac{u_{e}}{\psi_{ey}}+\frac{P_{ey}}{\psi_{ey}^{2}}\eta_{e}\right\rangle .
\end{align*}

\end{proof}

\begin{center}
{\Large Acknowledgement}
\end{center}

This work is supported partly by the NSF grants DMS-0505460 and DMS-0707397.
The author thanks Sijue Wu for many helpful discussions.


\begin{thebibliography}{99}                                                                                               %


\bibitem {ag-book}Agranovich, M. S., \textit{Elliptic boundary problems},
Partial Differential equations IX pp. 1--132,

\bibitem {albert-et-87}Albert, J. P.; Bona, J. L.; Henry, D. B.
\textit{Sufficient conditions for stability of solitary-wave solutions of
model equations for long waves.} Phys. D \textbf{24} (1987), no. 1-3, 343--366.

\bibitem {albert-et-91}Albert, J. P.; Bona, J. L. \textit{Total positivity and
the stability of internal waves in stratified fluids of finite depth}. IMA J.
Appl. Math. \textbf{46} (1991), no. 1-2, 1--19.

\bibitem {at81}Amick, C. J.; Toland, J. F. \textit{On solitary water-waves of
finite amplitude}. Arch. Rational Mech. Anal. \textbf{76} (1981), no. 1, 9--95.

\bibitem {am-limit}Amick, C. J.; Toland, J. F. \textit{On Periodic Water-Waves
and their Convergence to Solitary Waves in the Long-Wave Limit}, Proc. Roy.
Soc. (London) Ser. A, \textbf{303}, (1981), 633-669.

\bibitem {atf}Amick, C. J.; Fraenkel, L. E.; Toland, J. F. \textit{On the
Stokes conjecture for the wave of extreme form.} Acta Math. \textbf{148}
(1982), 193--214.

\bibitem {bay-longuet}Byatt-Smith, J. G. B.; Longuet-Higgins, M. S. \textit{On
the speed and profile of solitary waves.} Proc. Roy. Soc. London Ser. A
\textbf{350} (1976), no. 1661, 175--189.

\bibitem {beale77}Beale, J. Thomas \textit{The existence of solitary water
waves.} Comm. Pure Appl. Math. \textbf{30} (1977), no. 4, 373--389.

\bibitem {benjamin72}Benjamin, T. B. \textit{The stability of solitary waves.}
Proc. Roy. Soc. (London) Ser. A \textbf{328} (1972), 153--183.

\bibitem {benjamin73}Benjamin, T. B., \textit{Lectures on nonlinear wave
motion,} In "Nonlinear Wave Motion," (ed. A. C. Newell) American Math. Soc:
Providence, R. I. Lecture Notes in Applied Mathematics 15 (1974), 3--47.

\bibitem {bergh}Bergh, J\"{o}ran; L\"{o}fstr\"{o}m, J\"{o}rgan,
\textit{Interpolation spaces. An introduction. }Grundlehren der Mathematischen
Wissenschaften, No. \textbf{223}. Springer-Verlag, Berlin-New York, 1976.

\bibitem {bona-sachs}Bona, J. L.; Sachs, R. L. \textit{The existence of
internal solitary waves in a two-fluid system near the KdV limit.} Geophys.
Astrophys. Fluid Dynamics \textbf{48} (1989), no. 1-3, 25--51.

\bibitem {bss87}Bona, J. L.; Souganidis, P. E.; Strauss, W. A.
\textit{Stability and instability of solitary waves of Korteweg-de Vries
type.} Proc. Roy. Soc. London Ser. A \textbf{411} (1987), no. 1841, 395--412.

\bibitem {btd-1}Buffoni, B.; Dancer, E. N.; Toland, J. F. \textit{The
regularity and local bifurcation of steady periodic water waves.} Arch.
Ration. Mech. Anal. \textbf{152} (2000), no. 3, 207--240.

\bibitem {btd-2}Buffoni, B.; Dancer, E. N.; Toland, J. F. \textit{The
sub-harmonic bifurcation of Stokes waves. }Arch. Ration. Mech. Anal.
\textbf{152} (2000), no. 3, 241--271.

\bibitem {bt-book}Buffoni, Boris; Toland, John \textit{Analytic theory of
global bifurcation. An introduction.} Princeton Series in Applied Mathematics.
Princeton University Press, Princeton, NJ, 2003.

\bibitem {saff-chen}Chen, B.; Saffman, P. G. \textit{Numerical evidence for
the existence of new types of gravity waves of permanent form on deep water.
}Stud. Appl. Math. \textbf{62} (1980), no. 1, 1--21.

\bibitem {cordes}Cordes, H. O., \textit{On compactness of commutators of
multiplications and convolutions, and boundedness of pseudodifferential
operators.} J. Funct. Anal. \textbf{18} (1975), 115--131.

\bibitem {con-stokes}A. Constantin, \textit{The trajectories of particle in
Stokes waves}, Invent. Math., \textbf{166}, pp. 523--535 (2006).

\bibitem {con-esh07}Constantin, Adrian; Escher, Joachim \textit{Particle
trajectories in solitary water waves, }Bull. Amer. Math. Soc. (N.S.)\textbf{
44} (2007), no. 3, 423--431.

\bibitem {con-esher}Constantin, Adrian; Escher, Joachim \textit{Wave breaking
for nonlinear nonlocal shallow water equations.} Acta Math. \textbf{181}
(1998), no. 2, 229--243.

\bibitem {cs-peakon}Constantin, Adrian; Strauss, Walter A. \textit{Stability
of peakons.} Comm. Pure Appl. Math. \textbf{53} (2000), no. 5, 603--610.

\bibitem {craige88}Craig, W. and Sternberg, P., \textit{Symmetry of solitary
waves,} Comm. PDE \textbf{13} (1988), 603--633.

\bibitem {craige01}Craig, W., \textit{Nonexistence of solitary water waves in
three dimensions,} Phil. Trans. Royal Soc. London A \textbf{360} (2002), 1--9.

\bibitem {duncan}Duncan, J. H., \textit{Spilling breakers}, Annu. Rev. Fluid
Mech., \textbf{33} (2001), 519-547.

\bibitem {fre-heys}Friedrichs, K. O. and Hyers, D. H., \textit{The existence
of solitary waves,} Comm. Pure Appl. Math.\textbf{ 7} (1954), 517--550.

\bibitem {gss87}Grillakis, M., Shatah, J. and Strauss, W., \textit{Stability
theory of solitary waves in the presence of symmetry. I,} Journal of
Functional Analysis, \textbf{74}, no. 1, (1987), 160--197.

\bibitem {guo-lin}Guo, Yan and Lin, Zhiwu, \textit{Unstable and Stable Galaxy
Models}, to appear in Comm. Math. Phys.

\bibitem {hislop-sig-book}Hislop, P. D., and Sigal, I. M.,
\textit{Introduction to Spectral theory. With applications to Schr\"{o}dinger
operators,} Springer-Verlag, New York, 1996.

\bibitem {hunz-notes}Hunziker, W. \textit{Notes on asymptotic perturbation
theory for Schr\"{o}dinger eigenvalue problems.} Helv. Phys. Acta \textbf{61}
(1988), no. 3, 257--304.

\bibitem {hur-lin}Hur, Vera and Lin, Zhiwu, \textit{Unstable surface waves in
running water}, to appear in Comm. Math. Phys.

\bibitem {kataoka-peri}Kataoka, T. \textit{On the superharmonic instability of
surface gravity waves on fluid of finite depth.} J. Fluid Mech. \textbf{547}
(2006), 175--184.

\bibitem {kataoka-soli}Kataoka, Takeshi \textit{The stability of
finite-amplitude interfacial solitary waves.} Fluid Dynam. Res. 38 (2006), no.
\textbf{12}, 831--867.

\bibitem {lannes}Lannes, D., \textit{Well-posedness of the water-waves
equations}, J. Amer. Math. Soc., \textbf{18}, (2005) 605--654.

\bibitem {lavrentiv}Lavrentiev, M. A. I. \textit{On the theory of long waves.
II. A contribution to the theory of long waves.} Amer. Math. Soc. Translation
1954, (1954). no. \textbf{102},

\bibitem {lewy}Lewy, H., \textit{A note on harmonic functions and a
hydrodynamic application,} Proc. Amer. Math. Soc. \textbf{3} (1952), 111--113.

\bibitem {li-y}Li, Yi A. \textit{Linear stability of solitary waves of the
Green-Naghdi equations}. Comm. Pure Appl. Math. \textbf{54} (2001), no. 5, 501--536.

\bibitem {lin-bgk}Lin, Zhiwu \textit{Instability of periodic BGK waves.} Math.
Res. Lett. \textbf{8} (2001), no. 4, 521--534.

\bibitem {lin-cmp}Lin, Zhiwu \textit{Some stability and instability criteria
for ideal plane flows. }Comm. Math. Phys. \textbf{246} (2004), no. 1, 87--112.

\bibitem {lin-sima}Lin, Zhiwu \textit{Instability of some ideal plane flows.}
SIAM J. Math. Anal. \textbf{35} (2003), no. 2, 318--356.

\bibitem {lin-strauss1}Lin, Zhiwu; Strauss, Walter A. \textit{Linear stability
and instability of relativistic Vlasov-Maxwell systems.} Comm. Pure Appl.
Math. \textbf{60} (2007), no. 5, 724--787.

\bibitem {lin-strauss2}Lin, Zhiwu and Strauss, Walter A., \textit{A sharp
stability criterion for Vlasov-Maxwell systems}, to appear in Invent. Math.

\bibitem {lin-stokes}Lin, Zhiwu, \textit{Instability of large Stokes waves,}
in preparation.

\bibitem {lin-dispersive}Lin, Zhiwu, \textit{Instability of nonlinear
dispersive solitary waves, }Submitted.

\bibitem {lin-periodic}Lin, Zhiwu, \textit{Instability of nonlinear dispersive
periodic waves, }in preparation.

\bibitem {lin-dp}Lin, Zhiwu and Liu, Yue, \textit{Stability of peakons for the
Degasperis-Procesi equation}, to appear in Comm. Pure Appl. Math.

\bibitem {long-soli-74}Longuet-Higgins, M. S. \textit{On the mass, momentum,
energy and circulation of a solitary wave.} Proc. Roy. Soc. (London) Ser. A
\textbf{337} (1974), 1--13.

\bibitem {long-fenton}Longuet-Higgins, M. S.; Fenton, J. D. \textit{On the
mass, momentum, energy and circulation of a solitary wave. II.} Proc. Roy.
Soc. (London) Ser. A \textbf{340} (1974), 471--493.

\bibitem {long-fox1}Longuet-Higgins, M. S.; Fox, M. J. H. \textit{Theory of
the almost-highest wave: the inner solution.} J. Fluid Mech. \textbf{80
}(1977), no. 4, 721--741.

\bibitem {long-fox2}Longuet-Higgins, M. S.; Fox, M. J. H. \textit{Theory of
the almost-highest wave. II. Matching and analytic extension.} J. Fluid Mech.
\textbf{85} (1978), no. 4, 769--786.

\bibitem {long-fox-soli}Longuet-Higgins, M. S.; Fox, M. J. H.
\textit{Asymptotic theory for the almost-highest solitary wave.} J. Fluid
Mech. \textbf{317} (1996), 1--19.

\bibitem {long-tanaka}Longuet-Higgins, Michael; Tanaka, Mitsuhiro \textit{On
the crest instabilities of steep surface waves.} J. Fluid Mech. \textbf{336}
(1997), 51--68.

\bibitem {mcleod-F}McLeod, J. B. \textit{The Froude number for solitary waves.
}Proc. Roy. Soc. Edinburgh Sect. A\textbf{ 97} (1984), 193--197.

\bibitem {naumkim}Naumkin, P. \& Shishmarev, I., \textit{Nonlinear Nonlocal
Equations in the Theory of Waves.} Transl. Math. Monographs, \textbf{133}.
Amer. Math. Soc., Providence, RI, 1994.

\bibitem {pw92-evans}Pego, Robert L.; Weinstein, Michael I.
\textit{Eigenvalues, and instabilities of solitary waves.} Philos. Trans. Roy.
Soc. London Ser. A \textbf{340} (1992), no. 1656, 47--94.

\bibitem {peregrine}Peregrine, D. H., \textit{Breaking waves on beaches},
Annu. Rev. Fluid Mech., \textbf{15} (1983), 149-178.

\bibitem {peregrine2}Peregrine, D. H., \textit{Water waves and their
development in space and time}, Proc. Roy. Soc. (London) Ser. A, \textbf{400}
(1985), 1-18.

\bibitem {plot91}Plotnikov, P. I. \textit{Nonuniqueness of solutions of a
problem on solitary waves, and bifurcations of critical points of smooth
functionals.} (Russian) Izv. Akad. Nauk SSSR Ser. Mat. \textbf{55} (1991), no.
2, 339--366; translation in Math. USSR-Izv. \textbf{38} (1992), no. 2, 333--357

\bibitem {saff85}Saffman, P. G. \textit{The superharmonic instability of
finite-amplitude water waves. }J. Fluid Mech. \textbf{159} (1985), 169--174.

\bibitem {siegler}Seliger, R., \textit{A note on the breaking of waves.} Proc.
Roy. Soc. Ser. A, \textbf{303} (1968), 493--496.

\bibitem {starr}Starr, Victor P. \textit{Momentum and energy integrals for
gravity waves of finite height.} J. Marine Research \textbf{6} (1947), 175--193.

\bibitem {tanaka83}Tanaka, M., \textit{The stability of steep gravity waves},
J. Phys. Soc. Japan \textbf{52} (1983), pp. 3047--3055.

\bibitem {tanaka-per}Tanaka, M., \textit{The stability of steep gravity waves.
II.} J. Fluid Mech. \textbf{156} (1985), 281--289.

\bibitem {tanaka86}Tanaka, M., \textit{The stability of solitary waves.} Phys.
Fluids \textbf{29 }(1986), no. 3, 650--655.

\bibitem {tanaka-dold-pere87}Tanaka, M., Dold, J.W., Lewy, M. and Peregrine.
D.H. (1987) \textit{Instability and breaking of a solitary wave,} J. Fluid
Mech. \textbf{185}, .235-248.

\bibitem {toland78}Toland, J. F. \textit{On the existence of a wave of
greatest height and Stokes's conjecture.} Proc. Roy. Soc. London Ser. A
\textbf{363} (1978), no. 1715, 469--485.

\bibitem {vock-hunz82}Vock, E.; Hunziker, W.\textit{ Stability of
Schr\"{o}dinger eigenvalue problems. }Comm. Math. Phys. \textbf{83} (1982),
no. 2, 281--302.

\bibitem {wu}Wu, S. \textit{Well-posedness in Sobolev spaces of the full water
wave problem in 2-D}, Invent. Math., \textbf{130} (1997), 39--72.

\bibitem {zakrahov}Zakharov, V. \textit{Stability of periodic waves of finite
amplitude on the surface of a deep fluid.} J. Appl. Mech. Tech. Phys.
\textbf{9} (1968), 190--194.
\end{thebibliography}
\end{document}